\newtheorem{theorem}{\bf Theorem}[section]
\newtheorem{proposition}{\bf Proposition}[section]
\newtheorem{remark}{\bf Remark}[section]
\newtheorem{definition}{\bf Definition}[section]
\newtheorem{example}{\bf Example }[section]
\newenvironment{proof}{
\begin{trivlist}
\item[\hspace{\labelsep}{\bf\noindent Proof. }] }{\par\hfill\end{trivlist}
\par}
\date{\empty}
\title{
\huge\bf Fractional generalized cumulative entropy and its dynamic version$^{\small \star}$% \thanks{
%First manuscript}\\ 
\date{\empty}
}
\author{
\large \bf Antonio Di Crescenzo\footnote{
Dipartimento di Matematica, Universit\`a degli Studi di Salerno, Via Giovanni Paolo II, 132, 84084 Fisciano (SA), Italy, 
Email:  adicrescenzo@unisa.it 
}
\qquad
Suchandan Kayal\footnote{
Department of Mathematics, National Institute of Technology Rourkela, 
Rourkela-769008, India, Email: kayals@nitrkl.ac.in,~suchandan.kayal@gmail.com
}
\qquad
Alessandra Meoli\footnote{
Dipartimento di Matematica, Universit\`a degli Studi di Salerno, Via Giovanni Paolo II, 132, 84084 Fisciano (SA), Italy, 
Email:  ameoli@unisa.it 
}
\\
\\
\\
\centerline{$(\star)$ \ paper accepted for publication on} 
\\
\centerline{\bf Communications in Nonlinear Science and Numerical Simulation}
}
\begin{document}
 
\maketitle

\begin{abstract} 
Following the theory of information measures based on the cumulative distribution function, 
we propose the fractional generalized cumulative entropy, and its dynamic version. These entropies 
are particularly suitable to deal with distributions satisfying the proportional reversed hazard model. 
We study the connection with fractional integrals, and some bounds and comparisons based on 
stochastic orderings, that allow to show that the proposed measure is actually a variability measure. 
The investigation also involves various notions of reliability theory, since the considered 
dynamic measure is a suitable extension of the mean inactivity time. 
We also introduce the empirical generalized fractional cumulative entropy as a non-parametric 
estimator of the new measure. It is shown that the empirical measure converges to the proposed 
notion almost surely. 
Then, we address the stability of the empirical measure and provide 
an example of application to real data. Finally,
a central limit theorem is  established under the exponential distribution.

\medskip\noindent
\noindent{\bf Keywords:}  % Communications in Nonlinear Science and Numerical Simulation:    Please supply up to 4 keywords
Cumulative entropy, fractional calculus, stochastic orderings,  estimation.
\end{abstract}

%%%%%%%%%%%%%%%%%%%%%%%%%%%%%%
\section{Introduction and background}\label{sec:intro}
%%%%%%%%%%%%%%%%%%%%%%%%%%%%%
Let $X$ be a discrete random variable taking values in $\{x_i; i=1,\ldots,n\}$ and having 
probability mass function $p_{i}=\mathbb P(X=x_{i})$, where $0< p_{i}<1$ for $i=1,\ldots, n.$
The  entropy of $X$ is given by (see Shannon \cite{shannon1948note}) 
\begin{eqnarray}\label{eq1.1}
S(X)=-\sum_{i=1}^{n}p_{i}\ln p_{i},
\end{eqnarray}
where `$\ln$' denotes natural logarithm. It is well known that the entropy (\ref{eq1.1}) 
quantifies the uncertainty contained in the probability distribution associated to $X$, and is particularly important  
in coding theory (see Cover and Thomas \cite{CoverThomas} for specific details). 
In $2009$, Ubriaco \cite{ubriaco2009entropies} extended the notion of entropy to the following version 
based on fractional calculus:  
$$
 S^{\alpha}(X)=\sum_{i=1}^{n}p_{i}(-\ln p_{i})^{\alpha},
 \qquad 0\le \alpha\le1.
$$
Clearly, for $\alpha=1$ the fractional entropy $S^{\alpha}(X)$ reduces to the classical  entropy $(\ref{eq1.1})$. 
The author established that the fractional entropy is stable in the sense of Lesche and thermodynamic 
stability criteria. Moreover, the fractional entropy is nonadditive, positive and concave in nature. 
Machado \cite{machado2014fractional} showed that for the description of a complex system, 
the fractional entropy is quite appealing since it allows high sensitivity to the signal evolution. 
Recently, motivated by the fractional entropy, Machado and Lopes \cite{machado2019fractional} 
proposed the similar measure named fractional Renyi entropy and discussed various properties. 
\par
There have been various developments of uncertainty measures in continuous domain too. 
The continuous analogue of (\ref{eq1.1}) is known as the differential entropy. 
For a nonnegative absolutely continuous random variable $X$ with 
probability density function (PDF) $f(x)$, the differential entropy is
\begin{eqnarray}\label{eq1.2}
H(X)=-\int_{0}^{\infty}f(x)\ln f(x)\,dx.
\end{eqnarray}
However, differently from the classical entropy that takes nonnegative values in the case of discrete
random variables, the differential entropy may assume negative values. For example, for the random variable 
uniformly distributed in the interval $[0,a]$, the differential entropy is equal to $\ln a$, and then is negative 
for $0<a<1$. In order to avoid this fact, various different measures have been proposed in the recent past. 
Indeed, Rao {\em et al.}\  \cite{rao2004cumulative} introduced a measure of uncertainty similar to $H(X)$, 
for which the PDF is replaced by the survival function $\bar{F}(x)=\mathbb{P}(X>x)$ 
in the right-hand-side of (\ref{eq1.2}). This is known as the cumulative residual entropy; for a 
nonnegative random variable it is defined as 
\begin{eqnarray}
 CRE(X)=-\int_{0}^{\infty}\bar{F}(x)\ln\bar{F}(x)\,dx
 \label{eq:CREX}
\end{eqnarray}
and assumes nonnegative values. Along this line Di Crescenzo and Longobardi \cite{di2009cumulative} 
proposed and studied a similar measure, named cumulative entropy. This is defined in terms of the cumulative 
distribution function (CDF) $F(x)=\mathbb{P}(X\leq x)$, i.e.\ (see also Navarro {\em et al.}\ \cite{Navarro2010})
\begin{eqnarray}\label{eq1.3}
CE(X)=-\int_{0}^{l}F(x)\ln F(x)\,dx,
\end{eqnarray}
where $(0,l)$ is the support of $X$.  
The corresponding dynamic measure for the past lifetime is based on the conditional distribution function 
$\mathbb{P}(X\leq x\,|\, X\leq t)=\frac{F(x)}{F(t)}$, $0\leq x\leq t$, and is named cumulative past entropy: 
\begin{eqnarray}\label{eq1.3t}
CE(X;t)=-\int_{0}^{t}\frac{F(x)}{F(t)}\ln\frac{ F(x)}{F(t)}\,dx,
\end{eqnarray}
for $t\in (0,l)$. 
Recently, stimulated by the purpose of constructing a fractional version of $CRE(X)$, in analogy with $S^{\alpha}(X)$  
the following measure has been introduced in Xiong {\em et al.}\ \cite{xiong2019fractional}: 
\begin{eqnarray*} 
 {\cal E}_{\alpha}(X)=\int_{0}^{\infty}\bar{F}(x)[-\ln\bar{F}(x)]^{\alpha}\,dx, \qquad 0\le \alpha\le1.
\end{eqnarray*}
This is called fractional cumulative residual entropy of $X$. 
Among the results on this measure presented in \cite{xiong2019fractional} we mention the asymptotics of its 
empirical version and suitable applications to financial data. 
\par
We remark that a better correspondence with other useful measures can be obtained by including a further term. 
Namely, for any nonnegative random variable $X$ one can consider the {\em fractional generalized cumulative residual entropy}, 
defined as 
\begin{eqnarray}\label{eq1.5}
CRE_{\alpha}(X):=\frac{1}{\Gamma(\alpha+1)}\,{\cal E}_{\alpha}(X)
=\frac{1}{\Gamma(\alpha+1)}\int_{0}^{\infty}\bar{F}(x)[-\ln\bar{F}(x)]^{\alpha}\,dx, 
\qquad   \alpha\ge 0.
\end{eqnarray}
If $\alpha$ is a positive integer, say $\alpha=n\in \mathbb N$, then $CRE_{n}(X)$ identifies with the generalized cumulative residual entropy, 
that has been introduced by Psarrakos and Navarro \cite{PsarrakosNavarro2013}. We recall that, if $\alpha=n\in\mathbb N$, then 
$CRE_{n}(X)$ is a dispersion measure
that is strictly related to the (upper) record values of a sequence of independent and identically distributed random
variables. It is also related to the relevation transform and to the interepoch intervals of a non-homogeneous Poisson process 
(see, for instance, Toomaj and Di Crescenzo \cite{ToDiCr2020} and references therein for some recent results on this measure). 
\par
Along the lines of the above mentioned researches, in this paper we propose a fractional version of the generalized cumulative 
entropy. The new measure is defined similarly as in (\ref{eq1.5}), by replacing the survival function with the CDF of $X$. 
Fractional versions of various information measures have been proposed in the recent years. Indeed, more advanced 
mathematical tools are suitable to handle complex systems and anomalous dynamics. Various characteristics of fractional 
calculus allow the related measures to better capture long-range phenomena and nonlocal dependence in certain random 
systems. For instance, we recall the recent papers by Zhang and Shang \cite{zhang2019uncertainty} and 
Wang and Shang \cite{wang2020complexity}, finalized to study new fractional modifications on the discrete version 
of the cumulative residual entropy (\ref{eq:CREX}), which are useful to analyze time series and have been applied in the 
context of data from the stock market. Further applications of multiscale fractional measures to the analysis of time series 
has been successfully exploited in Dong and Zhang \cite{Dong2020}. Different information measures based on fractional 
calculus have been investigated in Yu {\em et al.}\ \cite{Yu2012}, where the fractional entropy and other related notions 
have been obtained by replacing the Riemann integral with the Riemann-Liouville integral operator, leading to new tools 
of interest in image analysis.  
\par
It is worth mentioning that the fractional measure proposed in this paper is particularly suitable to be adopted in the 
context of the proportional reversed hazard model, as already seen for various information measures derived from the 
cumulative entropy (\ref{eq1.3}). Moreover, it exhibits a nice connection with the Riemann-Liouville fractional integral with 
respect to another function.
\par
The rest of the paper is organized as follows. In Section $2$, we discuss some properties of the fractional generalized 
cumulative entropy and provide some examples from typical distributions of interest. 
In particular, we 
show that the fractional generalized cumulative entropy is actually a variability measure. 
Moreover,  
we analyze the proposed measure under the proportional reversed hazard model. 
We also point out the above mentioned connection with fractional integrals of the Riemann-Liouville type. 
Section $3$ is devoted to various bounds for the proposed measures. Some comparisons are 
also studied by means of suitable stochastic orderings. In Section $4$, we propose the dynamic version of the considered 
measure. We provide some examples satisfying the proportional reversed hazard model and arising from the 
analysis of first-hitting time distributions in customary stochastic processes. In Section $5$, we propose a 
non-parametric estimator of the new measure. We discuss its statistical characteristics, with special care on the 
asymptotic properties. Such properties allow the empirical measure to be successfully adopted to describe 
the information content in experimental data, such as for  time-series and in signal analysis. 
Accordingly,  the section  
includes 
an example of application to a real dataset. 
Finally, some final remarks complete the paper in Section 6. 
\par
Throughout the paper,  $\mathbb N$  denotes the set of positive integers, and $\mathbb N_0=\mathbb N\cup\{0\}$. 
Moreover, aiming to provide suitable comparisons we shall deal with the stochastic orders recalled hereafter.  
Let $X$ and $Y$ be random variables with CDFs $F$ and $G$, respectively. Then, $X$ is smaller than $Y$
\par\noindent
-- \ in the {\em usual stochastic order}, denoted by $X\le_{st}Y$, 
if $F(x)\ge G(x)$ for all $x \in \mathbb R$; 
\par\noindent
-- \ in the {\em dispersive order}, denoted by $X\le_{d}Y$, 
if $F^{-1}(v)-F^{-1}(u)\le G^{-1}(v)-G^{-1}(u)$ for all $0<u\le v<1$, 
where $F^{-1}$ and $G^{-1}$ denote the right-continuous inverses of $F$ and $G$, respectively;
\par\noindent
-- \  in the {\em hazard rate order}, denoted by $X\leq_{hr} Y$, 
if $\bar{G}(x)/\bar{F}(x)$ is nondecreasing with respect to $x$, where $\bar{F}=1-F$ and $\bar{G}=1-G$ 
are respectively the survival functions of $X$ and $Y$. 
\par
We  refer the reader to the book of Shaked and Shanthikumar \cite{Shaked} for their main properties.
%========================
\section{Fractional generalized cumulative entropy}
%========================
Let $X$ be a nonnegative random variable with support $(0,l)$ and CDF $F$. 
Then, in analogy with the measure given in  (\ref{eq1.5}), the 
{\em fractional generalized cumulative entropy} of $X$ is defined by
\begin{eqnarray}\label{eq2.1}
 CE_{\alpha}(X)
 =\frac{1}{\Gamma(\alpha+1)}\int_{0}^{l}F(x)[-\ln F(x)]^{\alpha}\,dx,
 \qquad   \alpha>0,
\end{eqnarray}
provided that the integral in the right-hand-side is finite. 
Clearly, for $\alpha=1$, the measure $CE_{\alpha}(X)$ reduces to the cumulative
entropy given in (\ref{eq1.3}). From (\ref{eq2.1}) it is not hard to see that 
$$
 \lim_{\alpha\to 0^+} CE_{\alpha}(X)
 =\left\{
 \begin{array}{ll}
 l-\mathbb E(X), & 0<l<+\infty\\
 + \infty, & l=+\infty.
 \end{array}
 \right.
$$ 
The fractional generalized cumulative entropy is nonnegative and nonadditive. If $0<\alpha<1$, 
then it is concave with respect to the distribution function. 
% However, it is convex with respect to the parameter $\alpha$. 
From (\ref{eq2.1}) it is not hard to see that $CE_{\alpha}(X)\geq 0$; moreover one has  $CE_{\alpha}(X)= 0$ if and only if $X$ 
is degenerate. 
We recall that if $X$ is absolutely continuous, then the function in the square brackets in the right-hand side of Eq.\ (\ref{eq2.1}) 
corresponds to the cumulative reversed hazard rate function of $X$, i.e.
$$
 T(x)=-\ln F(x)=\int_x^l \tau(x)\,dx, \qquad 0<x<l,
$$  
where $\tau(x)=\frac{f(x)}{F(x)}$ is the reversed hazard rate of $X$ and $f(x)$ is the PDF of $X$. 
\par
The fractional generalized cumulative entropy is provided in Table \ref{table:examples} 
for some distributions, where $\Gamma$ denotes the complete gamma function, and where 
\begin{equation}
 E_{\alpha}(\beta)=\int_1^{\infty} e^{- \beta t } t^{-\alpha} \,dt
 \label{eq:expintf} 
\end{equation}
is the exponential integral function. For these cases, 
$CE_{\alpha}(X)$ is decreasing in $ \alpha$ and tends to 0 as $ \alpha\to \infty$. 
\par
When $\alpha$ is a positive integer, say $n\in \mathbb N$, then $CE_{n}(X)$ identifies with  the generalized cumulative 
entropy, defined and studied by Kayal \cite{kayal2016generalized}. In this case, $CE_{n}(X)$ is strictly related 
to the lower records of a sequence of i.i.d.\ random variables, and to the recursive reversed relevation transform 
(see also Di Crescenzo and Toomaj \cite{di2017further}). 
\begin{table}[t]
\begin{center}
\begin{tabular}{llll}
\hline
{} & $\!\!\!\!\!\!\!\!\!\!\!\!$distribution  &  $F(x)$     &  $CE_{\alpha}(X),  \quad   \alpha>0$      \\
\hline\\[-3mm]
(i) & \hbox{uniform} 
& $\displaystyle\frac{x}{l}$, \quad $0\leq x\leq l$  
& 
$\displaystyle\frac{l}{2^{\alpha+1}}$ 
\\[3mm]
\hline\\[-3mm]
(ii) & \hbox{power} 
& $\left(\displaystyle\frac{x}{l}\right)^b$, \quad $0\leq x\leq l$, \quad $b>0$  
& 
$ \displaystyle\frac{l\,b^{\alpha}}{(b+1)^{\alpha+1}}$ 
\\[3mm]
\hline\\[-3mm]
(iii) & \hbox{Fr\'echet} 
& $e^{- b \, x^{-\eta}}$, \quad $x>0$, \quad $b,\eta>0\quad$  
& $\displaystyle\frac{b^{1/\eta}}{\eta\, \Gamma(\alpha+1)}\,\Gamma\Big(\alpha-\displaystyle\frac{1}{\eta}\Big), 
\quad 0<\frac{1}{\eta}<\alpha $
\\[3mm]
\hline\\[-3mm]
(iv) & ${\hbox{bounded} \atop  \hbox{Fr\'echet}}$
& $\exp\left\{ b  (1- \frac{l}{x} )\right\}$,   \quad $0<x\leq l$, \quad $b>0\;$  
& $\displaystyle\frac{l \,b}{ \alpha} \left( e^{b}  (\alpha+b) E_{\alpha}(b)-1\right)$
\\[3mm]
\hline
\end{tabular}
\end{center}
\caption{
Fractional generalized cumulative entropy for some distributions.
}
\label{table:examples}
\end{table}
\begin{remark}\label{rem:symmCE}
It is not hard to see that if $X$ has bounded support $(0, l)$ and possesses a symmetric distribution, such that 
$F(x)=\bar F(l-x)$ for all $0\leq x\leq l$, then from Eqs.\ (\ref{eq1.5}) and (\ref{eq2.1}) one has 
$$
 CE_{\alpha}(X)=CRE_{\alpha}(X) \qquad \hbox{for all $\alpha>0$.}
$$ 
For instance, this is true for the uniform distribution (cf.\ Case (i) of Table \ref{table:examples} and Example 1 of \cite{xiong2019fractional}).  
\end{remark}
\par
We point out that, even though the cases of interest usually deal with absolutely continuous random variables, 
the fractional generalized cumulative entropy can also refer to discrete random variables. 
For instance, if $U$ is uniformly distributed on  $\{1,2,\ldots,n\}$ then from (\ref{eq2.1}) 
we have 
\begin{equation}
 CE_{\alpha}(U)
 =\frac{1}{\Gamma(\alpha+1)} \sum_{k=1}^{n-1}\left( \frac{k}{n}\right)  \left[-\ln\left( \frac{k}{n}\right) \right]^{\alpha},
 \qquad   \alpha>0.
 \label{eq:CEU}
\end{equation}
\par
Various information measures already known are expressed   
as the expectation of a given function of the random variable of interest. 
For instance, we recall  that for the cumulative residual entropy (\ref{eq:CREX}) 
one has (cf.\ Theorem 2.1 of Asadi and Zohrevand \cite{Asadi2007})
$$
 CRE(X)=\mathbb E[{\rm mrl}(X)],
$$
where, for all $t\geq 0$ such that $\bar F (t)>0$, 
\begin{equation}
 {\rm mrl}(t):=\mathbb E[X-t | X>t]
 =\frac{1}{\bar F(t)}\int_t^{\infty} \bar F (x)\, dx
 \label{eq:mrlt}
\end{equation}
is the mean residual life of a nonnegative lifetime $X$. 
Similarly, for the cumulative entropy (\ref{eq1.3}) we have  (see Theorem 3.1 of \cite{di2009cumulative}) 
$$
 CE(X)= \mathbb E[\tilde \mu (X)],
$$
where, for all $t\geq 0$ such that $F (t)>0$, 
\begin{equation}
 \tilde \mu(t) :=\mathbb E[t-X | X\leq t]
 =\frac{1}{F(t)} \int_{0}^{t}F(x)\,dx
 \label{eq:tildemu}
\end{equation}
is the mean inactivity time of $X$, and deserves interest in reliability theory. Hereafter we state 
a similar result for the fractional generalized cumulative entropy, by
expressing it as the expectation of a decreasing function of $X$, for $\alpha$ fixed. 
\begin{proposition}\label{prop:CEXxi}
Let $X$ be a nonnegative random variable with support $(0, l)$ and CDF $F$. If 
\begin{equation}
 \xi_\alpha(x):=\frac{1}{\Gamma(\alpha+1)} \int_{x}^{l}[-\ln F(t)]^{\alpha}\,dt<\infty, \qquad \alpha>0,
 \label{eq:defxial}
\end{equation}
then 
\begin{equation}
 CE_{\alpha}(X)=\mathbb E[\xi_\alpha(X)],
 \qquad \alpha>0.
 \label{eq:CEXxial}
\end{equation}
\end{proposition}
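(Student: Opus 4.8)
The plan is to compute $\mathbb E[\xi_\alpha(X)]$ directly from its definition as a Lebesgue--Stieltjes integral and to recover (\ref{eq2.1}) by interchanging the order of integration. First I would write
\begin{equation*}
 \mathbb E[\xi_\alpha(X)]=\int_0^l \xi_\alpha(x)\,dF(x)
 =\frac{1}{\Gamma(\alpha+1)}\int_0^l\left(\int_x^l [-\ln F(t)]^\alpha\,dt\right)dF(x),
\end{equation*}
where I have inserted the expression (\ref{eq:defxial}) for $\xi_\alpha$. This casts the quantity as a double integral over the triangular region $\{(x,t): 0<x\le t<l\}$.

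The key step is to exchange the order of integration. Since $F(t)\le 1$ on $(0,l)$, the integrand $[-\ln F(t)]^\alpha$ is nonnegative, so Tonelli's theorem applies without any a priori integrability assumption and legitimately yields
\begin{equation*}
 \mathbb E[\xi_\alpha(X)]
 =\frac{1}{\Gamma(\alpha+1)}\int_0^l [-\ln F(t)]^\alpha\left(\int_0^t dF(x)\right)dt.
\end{equation*}
Because $X$ is supported on $(0,l)$ we have $F(0)=0$, so the inner integral is simply $\int_0^t dF(x)=F(t)$. Substituting this back gives
\begin{equation*}
 \mathbb E[\xi_\alpha(X)]
 =\frac{1}{\Gamma(\alpha+1)}\int_0^l F(t)[-\ln F(t)]^\alpha\,dt,
\end{equation*}
which is precisely $CE_\alpha(X)$ as defined in (\ref{eq2.1}), completing the argument.

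I expect the only delicate point to be the justification of the interchange, and here the sign of the integrand saves the day: because $-\ln F(t)\ge 0$, it is Tonelli's theorem (rather than Fubini's) that applies, so no integrability needs to be checked beforehand, and the assumed finiteness of $\xi_\alpha$ in (\ref{eq:defxial}) merely guarantees that the common value is finite and that $\mathbb E[\xi_\alpha(X)]$ is well defined. As an alternative, when $X$ is absolutely continuous one could integrate by parts in $\int_0^l\xi_\alpha(x)f(x)\,dx$, using $\xi_\alpha'(x)=-[-\ln F(x)]^\alpha/\Gamma(\alpha+1)$ together with $\xi_\alpha(l)=0$; but then one must separately verify that the boundary term $\xi_\alpha(x)F(x)$ vanishes as $x\to 0^+$, which makes the Tonelli route preferable since it covers general (not necessarily absolutely continuous) distributions in one stroke.
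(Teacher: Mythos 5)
Your proposal is correct and is essentially the paper's own argument run in reverse: the paper starts from $CE_\alpha(X)$, writes $F(t)=\int_0^t dF(x)$, and applies Fubini's theorem over the same triangular region to land on $\int_0^l\xi_\alpha(x)\,dF(x)$. Your remark that the nonnegativity of $[-\ln F(t)]^\alpha$ lets Tonelli justify the interchange without a prior integrability check is a small but welcome sharpening of the paper's appeal to Fubini.
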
 
\begin{proof}
From (\ref{eq2.1}), by Fubini's  theorem and (\ref{eq:defxial}) we obtain
\begin{eqnarray*}
 CE_{\alpha}(X) \!\!\!\! 
 &=&\!\!\!\!  \frac{1}{\Gamma(\alpha+1)}\int_{0}^{l}[-\ln F(t)]^{\alpha}\,dt \int_0^t dF(x) 
 \\
 &=&\!\!\!\!  \frac{1}{\Gamma(\alpha+1)}\int_{0}^{l} \left( \int_x^l  [-\ln F(t)]^{\alpha}\,dt \right)dF(x)
 =  \int_{0}^{l} \xi_\alpha(x) \, dF(x),
\end{eqnarray*}
so that the relation (\ref{eq:CEXxial}) holds. 
\end{proof}
\par
We remark that for $\alpha=1$,  the result given in Proposition \ref{prop:CEXxi} corresponds to 
Proposition 3.1 of \cite{di2009cumulative}. 
\par
We recall that the measure defined in (\ref{eq1.5}) is shift-independent. That is, for the affine transformation 
$Y=cX+b$, $c>0$, $b\ge0$, one has 
$$
 CRE_{\alpha}(Y)= CRE_{\alpha}(cX+b)=c\;CRE_{\alpha}(X)\qquad \hbox{for all $\alpha>0$.} 
$$
Hereafter we show that 
the same property holds for the fractional generalized cumulative entropy (\ref{eq2.1}). The proof is a straightforward 
consequence of the relation $F_{Y}(x)=F_{X}(\frac{x-b}{c})$, $x\in\mathbb{R}$, and thus is omitted. 
\begin{proposition}\label{prop2.1}
Let $Y=cX+b$, where $c>0$ and $b\ge 0$. Then, 
$$
 CE_{\alpha}(Y)=CE_{\alpha}(cX+b)=c \;CE_{\alpha}(X)
 \qquad \hbox{for all $ \alpha>0$.}
$$
\end{proposition}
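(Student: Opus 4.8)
The plan is to start directly from the defining integral (\ref{eq2.1}) written for $Y$ and to reduce it to the one for $X$ by a single linear change of variable. Since $X$ has support $(0,l)$ and $c>0$, $b\ge 0$, the random variable $Y=cX+b$ has support $(b,\,cl+b)$, so that
$$
 CE_{\alpha}(Y)=\frac{1}{\Gamma(\alpha+1)}\int_{b}^{cl+b}F_{Y}(x)\,[-\ln F_{Y}(x)]^{\alpha}\,dx.
$$
The first step is to insert the stated relation $F_{Y}(x)=F_{X}\!\left(\frac{x-b}{c}\right)$ into this integrand, which expresses everything in terms of $F_X$ evaluated at the rescaled argument.

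The second step is to perform the substitution $u=\frac{x-b}{c}$, whence $x=cu+b$, $dx=c\,du$, and the limits $x=b$, $x=cl+b$ map to $u=0$, $u=l$, respectively. This yields
$$
 CE_{\alpha}(Y)=\frac{c}{\Gamma(\alpha+1)}\int_{0}^{l}F_{X}(u)\,[-\ln F_{X}(u)]^{\alpha}\,du=c\,CE_{\alpha}(X),
$$
so the scale factor $c$ emerges entirely from the Jacobian $dx=c\,du$, while the shift $b$ disappears after the change of variable. This is exactly the claimed identity, confirming the authors' remark that the result is a straightforward consequence of $F_{Y}(x)=F_{X}(\frac{x-b}{c})$.

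I expect no serious difficulty here; the only point requiring a little care — and hence the nearest thing to an obstacle — is the bookkeeping of the support under the affine map. It is worth noting that the integrand $F(\cdot)[-\ln F(\cdot)]^{\alpha}$ vanishes at both endpoints of the support, tending to $0$ as $F\to 0^{+}$ because $y(-\ln y)^{\alpha}\to 0$, and equaling $0$ wherever $F=1$ since then $-\ln F=0$. This ensures that the integral picks up no spurious boundary contributions and that the finiteness of $CE_{\alpha}(X)$ is inherited by $CE_{\alpha}(Y)$, so the change of variable is fully legitimate and the computation above is complete.
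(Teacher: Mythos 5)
Your proof is correct and follows exactly the route the paper indicates: the authors omit the proof, stating only that it is a straightforward consequence of $F_{Y}(x)=F_{X}\left(\frac{x-b}{c}\right)$, and your substitution $u=\frac{x-b}{c}$ is precisely the computation they have in mind. The added remarks on the support bookkeeping and the vanishing of the integrand at the endpoints are a harmless (and correct) elaboration.
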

\begin{remark}
It is worth mentioning that the fractional generalized cumulative entropy is actually a {\em variability 
measure} (following Bickel and Lehmann \cite{BickelLehmann1979}), thanks to previously 
given results. Indeed, under suitable assumptions the following properties hold: 
\\
(P1) \ $CE_{\alpha}(X+b)=CE_{\alpha}(X)$ for all constants $b$,
\\
(P2) \ $CE_{\alpha}(cX)=c\,CE_{\alpha}(X)$ for all  $c>0$,
\\
(P3) \ $CE_{\alpha}(a)= 0$ for any degenerate random variable at $a$,
\\
(P4) \ $CE_{\alpha}(X)\geq  0$ for all $X$,
\\
(P5) \ $X\leq_d Y$ implies $CE_{\alpha}(X)\leq CE_{\alpha}(Y)$. 
\\
Property (P5) is proved in Theorem \ref{th2.1} below. 
\end{remark}
\par
We point out that the results mentioned above, in particular that the fractional generalized cumulative entropy 
is a variability measure, can be seen to hold even if $X$ has more general support, say $(a,b)$. 
For instance, it is not hard to see that if Eq.\ (\ref{eq2.1}) is replaced by  
$CE_{\alpha}^{(a,b)}(X)=\frac{1}{\Gamma(\alpha+1)}\int_{a}^{b}F(x)[-\ln F(x)]^{\alpha}\,dx$, $\alpha>0$, 
with $-\infty\leq a<b\leq \infty$, then we come to a suitable extension of the considered measure. 
We leave the details to the reader, being straightforward. 
\par
In analogy with the normalized cumulative entropy proposed in \cite{di2009cumulative}, it is possible to define 
a normalized version of $CE_{\alpha}(X)$. Let us assume that the cumulative entropy (\ref{eq1.3}) 
is finite and non-zero. Then, the {\em normalized fractional generalized cumulative entropy} of a nonnegative 
random variable $X$ with support $(0,l)$ is defined as 
\begin{eqnarray}\label{eq2.6}
NCE_{\alpha}(X)=\frac{CE_{\alpha}(X)}{(CE(X))^{\alpha}}
=\frac{1}{\Gamma(\alpha+1)}
\frac{\int_{0}^{l}F(x)[-\ln F(x)]^{\alpha}\,dx}
{\left(\int_{0}^{l}F(x)[-\ln F(x)]\,dx\right)^{\alpha}},\qquad  \alpha>0.
\end{eqnarray}
Clearly, one has
$$
 \lim_{\alpha \to 0^+} NCE_{\alpha}(X)= \int_{0}^{l}F(x) \,dx,
 \qquad 
 \lim_{\alpha \to 1} NCE_{\alpha}(X)=1.
$$
For instance, Table \ref{table:examplesNCE} provides the normalized fractional generalized cumulative entropy    
for some distributions. The corresponding plots are given in Figure \ref{fig:Figure1}. 
\begin{table}[t]
\begin{center}
\begin{tabular}{lll}
\hline
{} &  distribution    &     $NCE_{\alpha}(X),  \quad   \alpha>0$      \\
\hline\\[-3mm]
(i) & \hbox{uniform}
& 
$ \Big(\displaystyle \frac{l}{2}\Big)^{1-\alpha}$ 
\\[3mm]
\hline\\[-3mm]
(ii) &  \hbox{power}
& 
$ \Big(\displaystyle \frac{l}{b+1}\Big)^{1-\alpha}$ 
\\[3mm]
\hline\\[-3mm]
(iii) &  \hbox{Fr\'echet}
& $\displaystyle\frac{  b^{(1-\alpha)/\eta} \,\Gamma\left(\alpha- \frac{1}{\eta}\right)}
{  \eta^{1-\alpha}\, \Gamma(\alpha + 1)\left(\Gamma\left(1- \frac{1}{\eta}\right)\right)^{\alpha}},
\quad 0<\displaystyle\frac{1}{\eta}<\alpha\leq 1$
\\[6mm]
\hline\\[-3mm]
(iv) & ${\hbox{bounded} \atop  \hbox{Fr\'echet}}$
&   $\displaystyle \frac{ (l \,b)^{1-\alpha} }{ \alpha}\, 
\frac{ e^{b}  (\alpha+b) E_{\alpha}(b)-1}{\left( e^{b}  (1+b) E_{1}(b)-1\right)^{\alpha}}$ 
\\[3mm]
\hline
\end{tabular}
\end{center}
\caption{
Normalized fractional generalized  cumulative entropy for the same distributions of Table \ref{table:examples}. 
}
\label{table:examplesNCE}
\end{table}
%
% ============================== Figure 1 ==============================
%
\begin{figure}[t]
\centering 
 \includegraphics[scale=0.45]{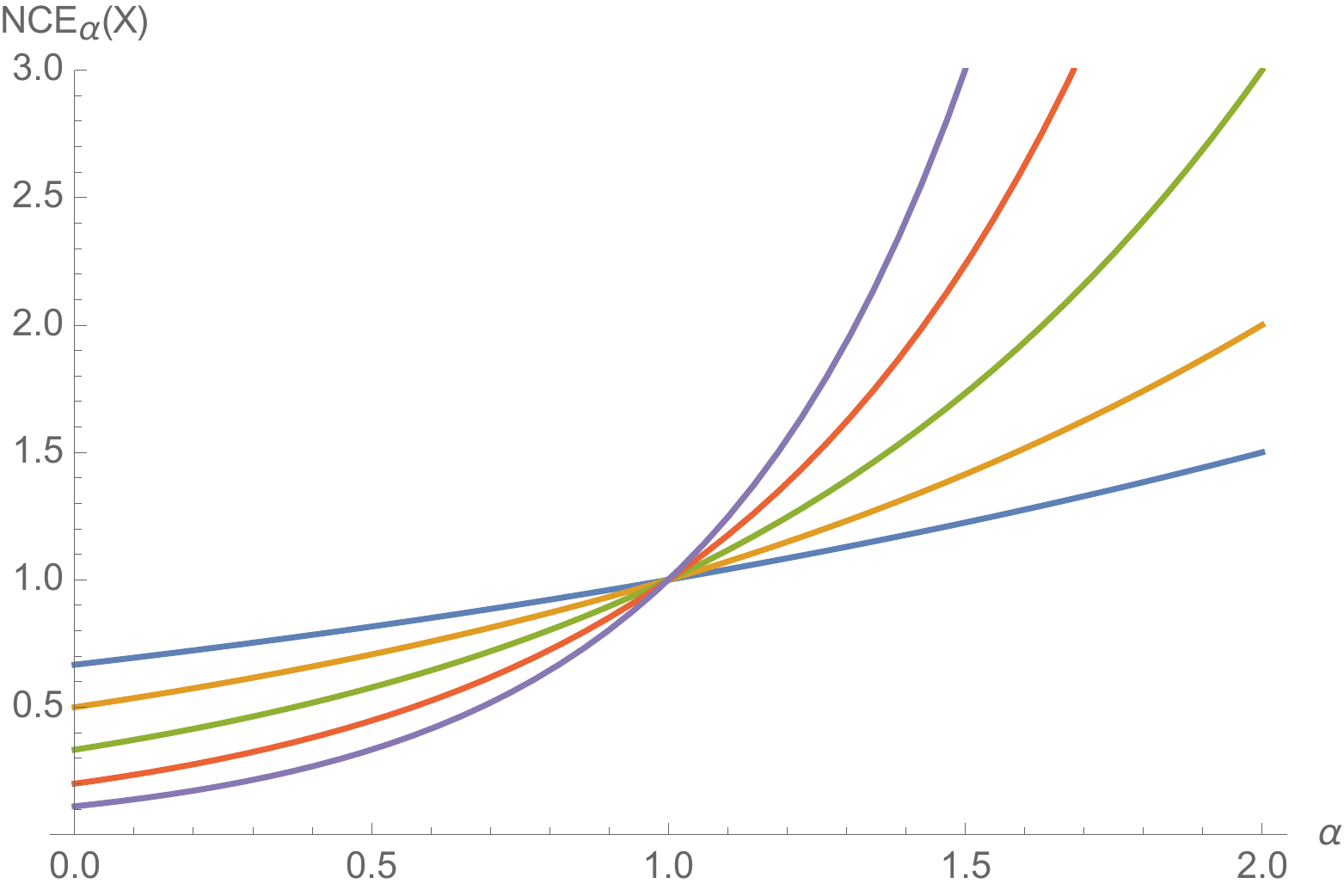}
 \;
 \includegraphics[scale=0.45]{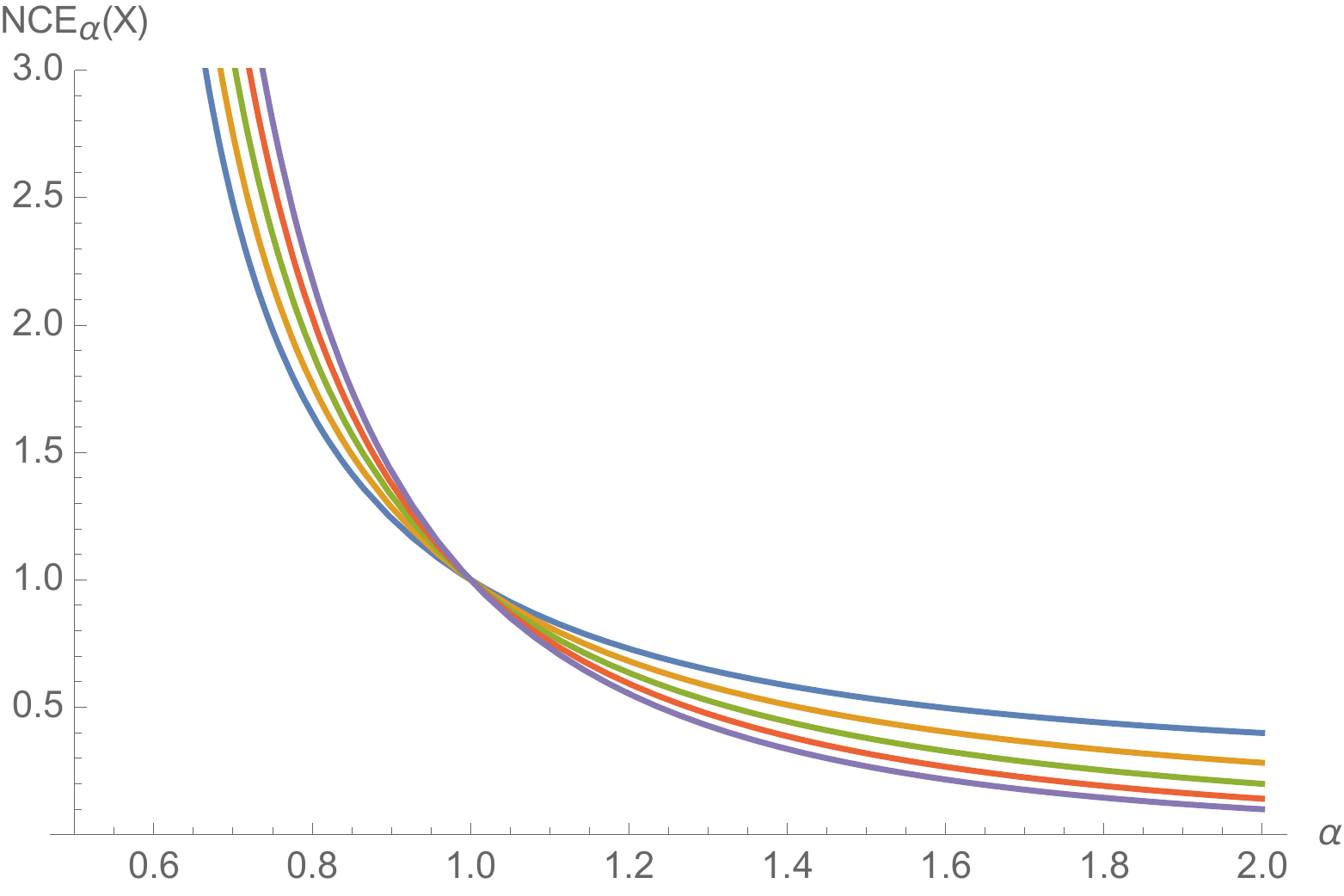}
\caption{
The normalized fractional cumulative entropy for the distributions given in case (ii) of Table \ref{table:examplesNCE}, 
with $l=1$, on the left, and in case (iii) of Table \ref{table:examplesNCE}, with $\eta=1$, on the right, as a function of $\alpha$, 
for $b=0.5$, $1$, $2$, $4$, $8$, from top to bottom near the origin in the case on the left, and viceversa on the right. 
}
\label{fig:Figure1}
\end{figure}
\par
With reference to the relation (\ref{eq2.1}) we also point out that, if $X$ is absolutely continuous with support $(0,l)$, 
CDF $F$ and  PDF $f$, then the fractional generalized cumulative entropy can be expressed as 
\begin{equation}
CE_{\alpha}(X)
=\frac{1}{\Gamma(\alpha+1)}\int_{0}^{1} \frac{u \,(-\ln u)^{\alpha}}{f(F^{-1}(u))}\,du,
\qquad   \alpha>0.
\label{eq:Faltern}
\end{equation}
\begin{example}
Let $X$ have half-logistic distribution distribution on $(0,\infty)$, with PDF and CDF given respectively by 
$$
 f(x)= \frac{4 e^{-2x}}{(1+e^{-2x})^2}, 
 \qquad 
 F(x)= \frac{1-e^{-2x}}{1+e^{-2x}}, 
 \qquad x\geq 0. 
$$
Hence, noting that $f(F^{-1}(u))=1-u^2$, making use of  (\ref{eq:Faltern}) and of the integral representations 
of the Riemann zeta function  (cf.\ Section 23.2 of Abramowitz and Stegun \cite{Abram1992})  
$$
 \zeta (s)=\sum_{k=1}^{\infty} k^{-s}, \qquad s>1,
$$
in this case one has
$$
 CE_{\alpha}(X)
 = \frac{\zeta (\alpha+1)}{2^{\alpha+1}},
 \qquad   \alpha>0.
$$
\end{example}
%
%========================
\subsection{Proportional reversed hazard model}
%========================
The proportional reversed hazard model is expressed by a nonnegative absolutely continuous random variable 
$ X_{\theta} $ whose CDF is a power of a baseline  function $ F\left(x\right) $, which in turn is the CDF of  
a nonnegative absolutely continuous random variable $X$, i.e.\ (see, for instance, Di Crescenzo \cite{DiCrescenzo2000}, 
Gupta and Gupta \cite{Gupta2007}, and Li {\em et al.}\ \cite{Li2020}) 
\begin{equation}\label{PRHR}
F_{X_{\theta}}(x)=\left[F (x)\right]^{\theta},\qquad x\in\mathbb{R}, \quad \theta>0.
\end{equation}
This model is often encountered in first-hitting-time problems of Markov processes and in the analysis 
of the reliability of parallel systems.  The PDF and the reversed hazard rate function of $ X_{\theta} $ are given respectively by 
\begin{equation}\label{densityPRHM}
f_{X_{\theta}} (x)=\theta\left[F (x)\right]^{\theta-1}f (x )
\end{equation}
and
\begin{equation}\label{ReversedHazardRate}
\tau_{X_\theta} (x)=\frac{f_{X_{\theta}}(x)}{F_{X_{\theta}}(x)}=\theta\,\frac{f(x)}{F(x)},
\end{equation}
where $ f(x) $ is the PDF of the baseline distribution. 
We now evaluate the fractional generalized cumulative entropy of $ X_{\theta} $.
\begin{proposition}
Let $X$ have support $(0,l)$, with $0<l<+\infty$,  or with $l=+\infty$ such that $\lim_{x\to+ \infty}x\,[-\ln F(x)]^{\alpha}=0$. Then, 
under the proportional reversed hazard model (\ref{PRHR}), the fractional generalized cumulative entropy 
of $ X_{\theta} $, $ \theta>0 $, can be expressed as
\begin{equation}\label{FGCETheta}
 CE_{\alpha}\left(X_{\theta}\right)
 =\mathcal{E}_{\theta}(\alpha)-\mathcal{E}_{\theta}(\alpha+1),\qquad  \alpha>0,
\end{equation}
where
\begin{equation}\label{ETheta}
	\mathcal{E}_{\theta}\left(\alpha\right)
	:=\frac{1}{\Gamma\left(\alpha\right)}\,\mathbb{E}\left[X_{\theta}\left[-\ln F_{X_{\theta}}\left(X_{\theta}\right)\right]^{\alpha-1}\right], 
	\qquad  \alpha>0.
\end{equation}
\end{proposition}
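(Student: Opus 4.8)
The plan is to derive the identity (\ref{FGCETheta}) by a single integration by parts applied directly to the defining integral (\ref{eq2.1}) written for $X_\theta$, exploiting the fact that differentiating $F_{X_\theta}(x)[-\ln F_{X_\theta}(x)]^{\alpha}$ reproduces exactly the two integrands hidden inside $\mathcal{E}_\theta(\alpha)$ and $\mathcal{E}_\theta(\alpha+1)$. Writing $G=F_{X_\theta}$ and $g=f_{X_\theta}$ for brevity, I would start from
$$
CE_\alpha(X_\theta)=\frac{1}{\Gamma(\alpha+1)}\int_0^l G(x)[-\ln G(x)]^{\alpha}\,dx
$$
and integrate by parts with respect to $dx$, taking $v=x$ and $u=G(x)[-\ln G(x)]^{\alpha}$.

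The key computation is the derivative of $u$. Using $\frac{d}{dx}[-\ln G(x)]=-g(x)/G(x)$, a direct differentiation gives
$$
\frac{d}{dx}\bigl(G(x)[-\ln G(x)]^{\alpha}\bigr)=g(x)[-\ln G(x)]^{\alpha}-\alpha\,g(x)[-\ln G(x)]^{\alpha-1}.
$$
Substituting this into the integration-by-parts formula turns $CE_\alpha(X_\theta)$, up to the boundary term, into $-\frac{1}{\Gamma(\alpha+1)}\int_0^l x\,u'(x)\,dx$, that is, into a combination of $\int_0^l x\,g(x)[-\ln G(x)]^{\alpha}\,dx$ and $\alpha\int_0^l x\,g(x)[-\ln G(x)]^{\alpha-1}\,dx$. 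I would then recognize these two integrals as $\Gamma(\alpha+1)\mathcal{E}_\theta(\alpha+1)$ and $\alpha\Gamma(\alpha)\mathcal{E}_\theta(\alpha)$, respectively (recalling that replacing $\alpha$ by $\alpha+1$ in (\ref{ETheta}) shifts the exponent to $\alpha$). Since $\alpha\Gamma(\alpha)=\Gamma(\alpha+1)$, the factor $\Gamma(\alpha+1)$ cancels throughout and one lands precisely on $CE_\alpha(X_\theta)=\mathcal{E}_\theta(\alpha)-\mathcal{E}_\theta(\alpha+1)$.

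The main obstacle, and the only place where the hypotheses on $l$ are actually needed, is to verify that the boundary term $\bigl[x\,G(x)[-\ln G(x)]^{\alpha}\bigr]_0^l$ vanishes. At $x\to 0^+$ both factors decay, since $t(-\ln t)^{\alpha}\to 0$ as $t\to 0^+$ and the extra factor $x$ tends to zero, so the lower endpoint contributes nothing. At the upper endpoint, in the bounded case $0<l<\infty$ one has $G(l)=1$, whence $[-\ln G(l)]^{\alpha}=0$; in the case $l=\infty$ I would use $G=F^{\theta}$ to write $[-\ln G(x)]^{\alpha}=\theta^{\alpha}[-\ln F(x)]^{\alpha}$, so that $x\,G(x)[-\ln G(x)]^{\alpha}\sim\theta^{\alpha}\,x[-\ln F(x)]^{\alpha}$, which tends to zero exactly by the stated assumption $\lim_{x\to+\infty}x[-\ln F(x)]^{\alpha}=0$. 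I would also note that the finiteness of $\mathcal{E}_\theta(\alpha)$ ensures all the integrals are well defined. It is worth observing that the computation itself holds for an arbitrary absolutely continuous CDF; the proportional reversed hazard structure enters only to transport the tail condition from the baseline $F$ to $F_{X_\theta}$, and it is what will render $\mathcal{E}_\theta(\alpha)$ explicitly computable in the applications that follow.
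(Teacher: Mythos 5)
Your proof is correct and follows essentially the same route as the paper's: a single integration by parts of $\int_0^l F_{X_\theta}(x)[-\ln F_{X_\theta}(x)]^{\alpha}\,dx$ with $v=x$, after which the two resulting integrals are recognized as $\Gamma(\alpha+1)\mathcal{E}_\theta(\alpha+1)$ and $\Gamma(\alpha+1)\mathcal{E}_\theta(\alpha)$. In fact you go slightly further than the paper by explicitly verifying that the boundary term $\bigl[x\,F_{X_\theta}(x)[-\ln F_{X_\theta}(x)]^{\alpha}\bigr]_0^l$ vanishes, which is exactly where the hypotheses on $l$ enter and which the paper's proof leaves implicit.
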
 
\begin{proof}
Recalling (\ref{eq2.1}) and (\ref{PRHR}), we have
\begin{align*}
CE_{\alpha}\left(X_{\theta}\right)
%=\frac{1}{\Gamma\left(\alpha+1\right)}\int_{0}^{l}F_{X_{\theta}}\left(x\right)\left[-\ln F_{X_{\theta}}\left(x\right)\right]^{\alpha}\,{d}x\\
=\frac{1}{\Gamma\left(\alpha+1\right)}\int_{0}^{l}\left[F\left(x\right)\right]^{\theta}\left[-\ln \left[F\left(x\right)\right]^{\theta}\right]^{\alpha}\,{d}x.
\end{align*}
Integrating by parts, we get
\begin{align}\label{ByParts}
CE_{\alpha}\left(X_{\theta}\right)&=\frac{1}{\Gamma\left(\alpha+1\right)}\left\{-\theta\int_{0}^{l}x\left[F\left(x\right)\right]^{\theta-1}f\left(x\right)\left[-\ln \left[F\left(x\right)\right]^{\theta}\right]^{\alpha}\,{d}x\right.
\nonumber\\
&\quad{}\left.+\alpha\,\theta\int_{0}^{l}x\left[F\left(x\right)\right]^{\theta-1}f\left(x\right)\left[-\ln \left[F\left(x\right)\right]^{\theta}\right]^{\alpha-1}\,{d}x\right\}
\\
&=\frac{1}{\Gamma\left(\alpha+1\right)}\left\{-\int_{0}^{l}xf_{X_{\theta}}\left(x\right)\left[-\ln F_{X_{\theta}}\left(x\right)\right]^{\alpha}\,{d}x\right.\nonumber\\
&\quad{}\left.+\alpha\int_{0}^{l}xf_{X_{\theta}}\left(x\right)\left[-\ln F_{X_{\theta}}\left(x\right)\right]^{\alpha-1}\,{d}x\right\},\nonumber
\end{align}
where the last equality is due to (\ref{densityPRHM}). The final result then follows from (\ref{ETheta}).
\end{proof}
\par
For instance, it is not hard to see that if $F(x)=e^{-x^{-1}}$, $x>0$, then (\ref{ETheta})   yields 
$\mathcal{E}_{\theta}\left(\alpha\right)=\frac{\theta}{\alpha -1}$, for $\alpha>1$, and thus 
(\ref{FGCETheta}) gives $CE_{\alpha}\left(X_{\theta}\right)=\frac{\theta}{\alpha(\alpha -1)}$, $\alpha>1$, this being in 
agreement with case (iii) of Table \ref{table:examples}. 
\par
We observe that $ CE_{\alpha}\left(X_{\theta}\right) $ can be alternatively rewritten as the sum of weighted fractional generalized cumulative entropies. In fact, from (\ref{PRHR}) and (\ref{ReversedHazardRate}), Eq.\ (\ref{ByParts}) becomes
\begin{align*}
CE_{\alpha}\left(X_{\theta}\right)&=\frac{1}{\Gamma\left(\alpha+1\right)}\left\{-\int_{0}^{l}xF_{X_{\theta}}\left(x\right)\tau_{X_\theta}\left(x\right)\left[-\ln F_{X_{\theta}}\left(x\right)\right]^{\alpha}\,{d}x\right.\\
&\quad{}\left.+\alpha \int_{0}^{l}xF_{X_{\theta}}\left(x\right)\tau_{X_\theta}\left(x\right)\left[-\ln F_{X_{\theta}}\left(x\right)\right]^{\alpha-1}\,{d}x\right\}.
\end{align*}
Moreover, the fractional generalized cumulative entropy of $ X_{\theta} $ satisfies a recurrence relation. 
Indeed, from (\ref{FGCETheta}) we have 
\begin{align}\label{RecurrenceRelation}
CE_{\alpha+1}\left(X_{\theta}\right)&=\mathcal{E}_{\theta}\left(\alpha+1\right)-\mathcal{E}_{\theta}\left(\alpha+2\right)
\nonumber\\
&=\mathcal{E}_{\theta}\left(\alpha\right)-\mathcal{E}_{\theta}\left(\alpha+2\right)-CE_{\alpha}\left(X_{\theta}\right).
\end{align}
More generally, in the next proposition we express the fractional generalized cumulative entropy of $ X_{\theta} $ of order $ \alpha+n $ 
in terms of the same measure of order $ \alpha $.
\begin{proposition}
Under the proportional reversed hazard model (\ref{PRHR}), for $ \alpha>0 $ and $ \theta>0 $, and for any integer $ n\geq 2 $,
\begin{equation}\label{RecurrenceN}
CE_{\alpha+n}\left(X_{\theta}\right)
=
\left(-1\right)^{n} CE_{\alpha}\left(X_{\theta}\right) +
\mathcal{E}_{\theta}\left(\alpha+n\right)-\mathcal{E}_{\theta}\left(\alpha+n+1\right)
+ \left(-1\right)^{n-1}
\left[\mathcal{E}_{\theta}\left(\alpha\right)-\mathcal{E}_{\theta}\left(\alpha+1\right)\right].
\end{equation}
%
%\begin{equation}\label{RecurrenceN}
%CE_{\alpha+n}\left(X_{\theta}\right)
%=
%\left(-1\right)^{n-1}\left[\mathcal{E}_{\theta}\left(\alpha\right)-\mathcal{E}_{\theta}\left(\alpha+1\right)-CE_{\alpha}\left(X_{\theta}\right)\right]+
%\mathcal{E}_{\theta}\left(\alpha+n\right)-\mathcal{E}_{\theta}\left(\alpha+n+1\right).
%	\end{equation}
%%
\end{proposition}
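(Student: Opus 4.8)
The plan is to argue by induction on $n\geq 2$, using the one-step recurrence (\ref{RecurrenceRelation}) as the engine. Read at a general order (i.e.\ with $\alpha$ replaced by $\alpha+k$), that relation asserts
\[
 CE_{\alpha+k+1}(X_{\theta})=\mathcal{E}_{\theta}(\alpha+k)-\mathcal{E}_{\theta}(\alpha+k+2)-CE_{\alpha+k}(X_{\theta}),
\]
so that each time the order is raised by one, the quantity $CE_{\alpha}(X_{\theta})$ re-enters carrying an extra minus sign. This is precisely the mechanism that produces the alternating factor $(-1)^{n}$ in (\ref{RecurrenceN}).

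For the base case $n=2$ I would apply the displayed recurrence with $k=1$ to express $CE_{\alpha+2}(X_{\theta})$ through $CE_{\alpha+1}(X_{\theta})$, and then substitute (\ref{RecurrenceRelation}) itself to eliminate $CE_{\alpha+1}(X_{\theta})$ in favour of $CE_{\alpha}(X_{\theta})$ and the $\mathcal{E}_{\theta}$-terms; collecting the result reproduces (\ref{RecurrenceN}) for $n=2$. For the inductive step, assuming (\ref{RecurrenceN}) at level $n$, I would apply the displayed recurrence with $k=n$ and substitute the inductive hypothesis for $CE_{\alpha+n}(X_{\theta})$. Then the two copies of $\mathcal{E}_{\theta}(\alpha+n)$ cancel, the coefficient of $CE_{\alpha}(X_{\theta})$ passes from $(-1)^{n}$ to $-(-1)^{n}=(-1)^{n+1}$, the coefficient of $[\mathcal{E}_{\theta}(\alpha)-\mathcal{E}_{\theta}(\alpha+1)]$ passes from $(-1)^{n-1}$ to $(-1)^{n}$, and the surviving terms combine to $\mathcal{E}_{\theta}(\alpha+n+1)-\mathcal{E}_{\theta}(\alpha+n+2)$, which is exactly (\ref{RecurrenceN}) at level $n+1$.

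The computation is entirely elementary, so the only point demanding care -- and hence the main, if modest, obstacle -- is the systematic bookkeeping of the alternating signs across the substitution. As an independent check one may bypass the induction altogether: since (\ref{FGCETheta}) gives $CE_{\alpha+n}(X_{\theta})=\mathcal{E}_{\theta}(\alpha+n)-\mathcal{E}_{\theta}(\alpha+n+1)$ directly, and likewise $CE_{\alpha}(X_{\theta})=\mathcal{E}_{\theta}(\alpha)-\mathcal{E}_{\theta}(\alpha+1)$, the two remaining terms on the right-hand side of (\ref{RecurrenceN}) appear with coefficients $(-1)^{n}$ and $(-1)^{n-1}$ and therefore cancel, confirming the identity at once.
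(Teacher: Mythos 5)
Your induction argument is correct and is essentially the paper's own proof: the same base case $n=2$ obtained by applying (\ref{RecurrenceRelation}) twice, and the same inductive step with the same sign bookkeeping. Your closing observation---that the identity also follows at once from (\ref{FGCETheta}), since the terms $(-1)^{n}CE_{\alpha}(X_{\theta})$ and $(-1)^{n-1}[\mathcal{E}_{\theta}(\alpha)-\mathcal{E}_{\theta}(\alpha+1)]$ cancel outright---is a valid and in fact shorter verification, though the paper does not use it.
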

\begin{proof}
	The proof is by induction on $ n $. Let us consider the case $ n=2 $. Applying the recurrence relation (\ref{RecurrenceRelation}) twice yields
	\begin{align*}
	CE_{\alpha+2}\left(X_{\theta}\right)&=\mathcal{E}_{\theta}\left(\alpha+1\right)-CE_{\alpha+1}\left(X_{\theta}\right)-\mathcal{E}_{\theta}\left(\alpha+3\right)\\
	&=CE_{\alpha}\left(X_{\theta}\right)+\mathcal{E}_{\theta}\left(\alpha+2\right)-\mathcal{E}_{\theta}\left(\alpha+3\right)
	-\mathcal{E}_{\theta}\left(\alpha\right)+\mathcal{E}_{\theta}\left(\alpha+1\right),
	\end{align*}
	which coincides with (\ref{RecurrenceN}). Now let us assume that Eq.\ (\ref{RecurrenceN}) holds for some $ n>2 $. Then, due to (\ref{RecurrenceRelation}),
	\begin{align*}
	CE_{\alpha+n+1}\left(X_{\theta}\right)&
	=\mathcal{E}_{\theta}\left(\alpha+n\right)-CE_{\alpha+n}\left(X_{\theta}\right)-\mathcal{E}_{\theta}\left(\alpha+n+2\right)
	\\
	&=\mathcal{E}_{\theta}\left(\alpha+n\right)
	-\left(-1\right)^{n-1}\left[\mathcal{E}_{\theta}\left(\alpha\right)-\mathcal{E}_{\theta}\left(\alpha+1\right)\right]
	+\left(-1\right)^{n-1}CE_{\alpha}\left(X_{\theta}\right)
	\\
	&\quad{}-\mathcal{E}_{\theta}\left(\alpha+n\right)+\mathcal{E}_{\theta}\left(\alpha+n+1\right)-\mathcal{E}_{\theta}\left(\alpha+n+2\right)
	\\
	&=\left(-1\right)^{n+1}CE_{\alpha}\left(X_{\theta}\right)+\mathcal{E}_{\theta}\left(\alpha+n+1\right)-\mathcal{E}_{\theta}\left(\alpha+n+2\right)
	+\left(-1\right)^{n}\left[\mathcal{E}_{\theta}\left(\alpha\right)-\mathcal{E}_{\theta}\left(\alpha+1\right)\right].
	\end{align*}
The validity of Eq.\ (\ref{RecurrenceN}) for $ n $ implies its validity
for $ n + 1 $. Therefore, it is true for all $ n\geq 2 $.
\end{proof}
%
%========================
\subsection{Connection with fractional integrals}
%========================
Let us now pinpoint the connection between the generalized measures  defined above and some notions of 
fractional calculus.   
\par
The growing interest on the theory and applications of Fractional Calculus has led several authors to introduce 
new notions of fractional integrals. In this area, for instance we refer the reader to the book by 
Samko {\em et al.}\ \cite{Samko_etal1993}. 
We recall that for any sufficiently well-behaved function $\phi$ locally integrable in the interval $I=(a,b]$, 
the (Riemann-Liouville) left- and right-sided fractional integrals of order $\alpha$ of $\phi$, 
for $a<x<b$ and  $\alpha>0$, are defined respectively as 
$$
 I_{a+}^{\alpha} \phi(x) =\frac{1}{\Gamma(\alpha)} \int_a^x \frac{\phi(y)}{(x-y)^{1-\alpha}}\,{ d}y, 
 \qquad
 I_{b-}^{\alpha} \phi(x) =\frac{1}{\Gamma(\alpha)} \int_x^b \frac{\phi(y)}{(y-x)^{1-\alpha}}\,{ d}y.
$$ 
These notions have been extended to the case of  integral with respect to another function. 
Indeed, if $g$ is a 
strictly 
increasing monotone function on $I$, having a continuous derivative 
$g'$ on $(a,b)$, then the left- and right-sided fractional integrals of order $\alpha$ of $\phi$ with respect to $g$, 
for $a<x<b$ and  $\alpha>0$, are given respectively by (see Section 18.2 of  Samko {\em et al.}\ \cite{Samko_etal1993}, 
or Section 2.5 of Kilbas \cite{Kilbas_etal2006})  
\begin{equation}
 I_{a+;g}^{\alpha} \phi(x) =\frac{1}{\Gamma(\alpha)} \int_a^x \frac{g'(y) \phi(y)}{[g(x)-g(y)]^{1-\alpha}}\,{ d}y, 
 \qquad
 I_{b-;g}^{\alpha} \phi(x) =\frac{1}{\Gamma(\alpha)} \int_x^b \frac{g'(y) 
 \phi(y)}{[ g(y)-g(x)]^{1-\alpha}}\,{ d}y.
 \label{eq:Ointegr}
\end{equation}
It is worth mentioning that both the fractional generalized cumulative entropy  and the 
fractional generalized cumulative residual entropy  can be 
expressed in terms of the integrals given in  (\ref{eq:Ointegr}). 
Indeed, from (\ref{eq2.1}) it  is not hard to see that 
$$
 CE_{\alpha}(X)=   \lim_{a\to 0} \lim_{x\to l} I_{a+;g}^{\alpha+1} \phi(x), 
 \qquad   \alpha>0,
$$
for
$$
 g(x)= \ln   F(x), \qquad 
 \phi(x)=[F(x)]^{2}[f(x)]^{-1},
$$
provided that the PDF $f$ is positive, continuous and integrable in $(0, l)$. Similarly, 
under the same assumptions for $f$ on $(0,\infty)$, from (\ref{eq1.5}) one has 
$$
 CRE_{\alpha}(X)=   \lim_{x\to 0} \lim_{b\to \infty} I_{b-;g}^{\alpha+1} \phi(x), 
 \qquad \alpha>0,
$$
for
$$
 g(x)=- \ln \bar F(x), \qquad 
 \phi(x)=[\bar F(x)]^{2}[f(x)]^{-1}.
$$
These remarks justify the fractional nature of the measures introduced so far. 

\section{Bounds and stochastic ordering}
%========================
The aim of this section is two-fold:   obtaining some bounds of the proposed fractional
measure, and providing results based on stochastic comparisons.  
\par
The cumulative entropy of the sum of two nonnegative independent random variables is larger 
than the maximum of their individual cumulative entropies (cf.\ \cite{di2009cumulative}). 
Below, we show that a similar inequality holds for the fractional generalized cumulative entropy. 
The proof follows from Theorem $2$ of \cite{rao2004cumulative}, therefore it is omitted.
\begin{proposition}\label{prop3.1}
For any pair of nonnegative absolutely continuous independent random
variables $X$ and $Y$, we have
$$ 
 CE_{\alpha}(X+Y)\ge \max\{CE_{\alpha}(X),CE_{\alpha}(Y)\}.
$$
\end{proposition}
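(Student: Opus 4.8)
The plan is to combine the convolution structure of the law of $X+Y$ with the concavity of the integrand in (\ref{eq2.1}). Set $\phi_{\alpha}(u)=u\,[-\ln u]^{\alpha}$ for $u\in[0,1]$, with $\phi_{\alpha}(0)=\phi_{\alpha}(1)=0$, so that $CE_{\alpha}(Z)=\frac{1}{\Gamma(\alpha+1)}\int_{0}^{\infty}\phi_{\alpha}(F_{Z}(z))\,dz$ for any nonnegative $Z$ (the upper limit may be taken as $\infty$ since $\phi_{\alpha}(1)=0$ beyond the support). Conditioning on $Y$ and using independence, the CDF of the sum admits the representation
$$
F_{X+Y}(z)=\int_{0}^{\infty}F_{X}(z-y)\,dF_{Y}(y)=\mathbb{E}_{Y}\!\left[F_{X}(z-Y)\right],\qquad z\ge 0,
$$
with the convention $F_{X}(t)=0$ for $t<0$. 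Since $X+Y\ge X$ almost surely one has $F_{X+Y}\le F_{X}$ pointwise; however $\phi_{\alpha}$ is not monotone on $[0,1]$, so this domination by itself is insufficient, and the concavity of $\phi_{\alpha}$ is the feature that will carry the proof.

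I would first record that $\phi_{\alpha}$ is concave on $(0,1)$. A direct differentiation gives
$$
\phi_{\alpha}''(u)=\frac{\alpha}{u}\,[-\ln u]^{\alpha-2}\bigl(\ln u+\alpha-1\bigr),
$$
whose sign on $(0,1)$ is that of $\ln u+\alpha-1$; this is nonpositive for every $u\in(0,1)$ precisely when $0<\alpha\le 1$, which is the same concavity already used in Section 2. Granting this, Jensen's inequality applied to the conditional expectation above yields, for each fixed $z$,
$$
\phi_{\alpha}\bigl(\mathbb{E}_{Y}[F_{X}(z-Y)]\bigr)\ge \mathbb{E}_{Y}\bigl[\phi_{\alpha}(F_{X}(z-Y))\bigr].
$$

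Integrating this over $z\in(0,\infty)$ and interchanging the order of integration by Tonelli's theorem reduces the right-hand side to an inner integral evaluated at a fixed $Y=y\ge 0$. As $\phi_{\alpha}(F_{X}(z-y))=0$ for $z<y$, the substitution $u=z-y$ gives
$$
\int_{0}^{\infty}\phi_{\alpha}(F_{X}(z-y))\,dz=\int_{0}^{\infty}\phi_{\alpha}(F_{X}(u))\,du=\Gamma(\alpha+1)\,CE_{\alpha}(X),
$$
which is exactly the shift-invariance $CE_{\alpha}(X+y)=CE_{\alpha}(X)$ of Proposition \ref{prop2.1} seen at the level of the integrand. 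Assembling the last displays produces $CE_{\alpha}(X+Y)\ge \mathbb{E}_{Y}[CE_{\alpha}(X)]=CE_{\alpha}(X)$, and interchanging the roles of $X$ and $Y$ gives $CE_{\alpha}(X+Y)\ge CE_{\alpha}(Y)$, whence the stated bound.

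The main obstacle is precisely the concavity hypothesis behind Jensen's step. From the sign of $\phi_{\alpha}''$, the integrand is concave on all of $(0,1)$ only for $0<\alpha\le 1$; for $\alpha>1$ it becomes convex near $u=1$, so Jensen's inequality reverses over part of the range and the reduction collapses. Extending the result to $\alpha>1$ would therefore demand a sharper pointwise argument exploiting $F_{X+Y}\le F_{X}$, or an entirely different decomposition. This limitation is consistent with the fact that $CE_{\alpha}$ was shown to be concave in the distribution function only for $0<\alpha<1$, and it matches the scope of Theorem 2 of \cite{rao2004cumulative}, which corresponds to the case $\alpha=1$.
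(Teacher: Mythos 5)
Your argument is essentially the one the paper intends: the proof there is omitted with a pointer to Theorem 2 of \cite{rao2004cumulative}, whose proof is precisely the convolution representation $F_{X+Y}(z)=\mathbb{E}_Y\left[F_X(z-Y)\right]$ followed by Jensen's inequality for the concave integrand and Fubini/Tonelli, which is what you have written out (with the CDF in place of the survival function and $u(-\ln u)^{\alpha}$ in place of $-u\ln u$). Your computation of $\phi_\alpha''$ and the resulting restriction are correct, and the caveat you raise is genuine rather than a defect of your write-up: the cited Theorem 2 is the case $\alpha=1$, and since $u(-\ln u)^{\alpha}$ loses concavity near $u=1$ when $\alpha>1$, neither your argument nor the paper's citation establishes the inequality for $\alpha>1$; by this method the proposition is proved only for $0<\alpha\le 1$, consistent with the paper's own remark that concavity in the distribution function holds for $0<\alpha<1$.
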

\par
In the following proposition, we obtain a  bound of the fractional entropy (\ref{eq2.1}) 
in terms of the cumulative entropy (\ref{eq1.3}).  
\begin{proposition}\label{prop3.2}
If $X$ is a nonnegative  random variable with support $(0,l)$,  $0<l<\infty$,  and with finite cumulative entropy, then 
$$
 CE_{\alpha}(X)
 \left\{
 \begin{array}{ll}
 \leq \,\displaystyle \frac{l^{1-\alpha}}{\Gamma(\alpha+1)} \,(CE(X))^{\alpha}, \ & \hbox{if  \ }0<\alpha\leq 1\\[4mm]
  \geq \,\displaystyle \frac{l^{1-\alpha}}{\Gamma(\alpha+1)} \,(CE(X))^{\alpha}, \ & \hbox{if  \ } \alpha\geq 1.
  \end{array}
  \right.
$$
\end{proposition}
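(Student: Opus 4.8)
The plan is to reduce everything to integrals of the common form $\int_0^l F(x)[-\ln F(x)]^\beta\,dx$ against Lebesgue measure on the finite interval $(0,l)$, and to pass between the exponents $\beta=\alpha$ and $\beta=1$ by a single application of H\"older's inequality in each regime. Recall from (\ref{eq2.1}) and (\ref{eq1.3}) that $\Gamma(\alpha+1)\,CE_{\alpha}(X)=\int_0^l F[-\ln F]^{\alpha}\,dx$ and $CE(X)=\int_0^l F[-\ln F]\,dx$, where both integrands are nonnegative on $(0,l)$. The finiteness of $l$ and of $CE(X)$ guarantees that all integrals below are finite, so H\"older applies without further qualification; the two cases differ only in how the integrand is split and in which quantity plays the role of the left-hand side.

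For $0<\alpha\leq 1$ I would factor the integrand defining $CE_{\alpha}(X)$ as $F[-\ln F]^{\alpha}=(F[-\ln F])^{\alpha}\,F^{1-\alpha}$ and apply H\"older with the conjugate exponents $1/\alpha$ and $1/(1-\alpha)$. This gives
\begin{equation*}
\int_0^l F[-\ln F]^{\alpha}\,dx\leq\left(\int_0^l F[-\ln F]\,dx\right)^{\alpha}\left(\int_0^l F\,dx\right)^{1-\alpha}=(CE(X))^{\alpha}\left(\int_0^l F\,dx\right)^{1-\alpha}.
\end{equation*}
Since $F\leq 1$ on the finite support $(0,l)$ one has $\int_0^l F\,dx\leq l$, and because $1-\alpha\geq 0$ the factor $(\int_0^l F\,dx)^{1-\alpha}$ is bounded above by $l^{1-\alpha}$; dividing by $\Gamma(\alpha+1)$ then yields the first inequality.

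For $\alpha\geq 1$ the direction of the inequality reverses, so instead I would apply H\"older to $CE(X)$ itself, writing its integrand as $F[-\ln F]=(F[-\ln F]^{\alpha})^{1/\alpha}\,F^{(\alpha-1)/\alpha}$ and using the conjugate exponents $\alpha$ and $\alpha/(\alpha-1)$. This produces
\begin{equation*}
CE(X)\leq\left(\int_0^l F[-\ln F]^{\alpha}\,dx\right)^{1/\alpha}\left(\int_0^l F\,dx\right)^{(\alpha-1)/\alpha}\leq\left(\int_0^l F[-\ln F]^{\alpha}\,dx\right)^{1/\alpha}l^{(\alpha-1)/\alpha},
\end{equation*}
again using $\int_0^l F\,dx\leq l$ together with $(\alpha-1)/\alpha\geq 0$. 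Raising both sides to the power $\alpha$ and rearranging gives $\int_0^l F[-\ln F]^{\alpha}\,dx\geq l^{1-\alpha}(CE(X))^{\alpha}$, which is the second inequality after division by $\Gamma(\alpha+1)$.

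The only genuinely delicate point is the exponent bookkeeping: one must split the powers of $F$ so that the two H\"older factors recombine to $F[-\ln F]$ (respectively $F[-\ln F]^{\alpha}$) in the first slot and to the bare $F$ in the second slot, with the H\"older exponents chosen conjugate. Once this is arranged, the residual factor is always a nonnegative power of $\int_0^l F\,dx$, so replacing this integral by its upper bound $l$ preserves the correct direction of the inequality in each regime; at $\alpha=1$ both bounds collapse to the identity $CE_{1}(X)=CE(X)$, consistent with the statement.
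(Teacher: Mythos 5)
Your argument is correct, but it is not the route the paper takes. The paper first uses the pointwise inequality $F(x)\le (F(x))^{\alpha}$ (for $0<\alpha\le 1$; reversed for $\alpha\ge 1$) to replace the integrand $F[-\ln F]^{\alpha}$ by $\bigl(-F\ln F\bigr)^{\alpha}$, and then applies the integral Jensen inequality to the concave (resp.\ convex) map $\varphi_{\alpha}(t)=t^{\alpha}/\Gamma(\alpha+1)$ with respect to the normalized Lebesgue measure $dx/l$ on $(0,l)$; the factor $l^{1-\alpha}$ comes out of the normalization. You instead keep the integrand intact, split it as $(F[-\ln F])^{\alpha}F^{1-\alpha}$ (resp.\ $(F[-\ln F]^{\alpha})^{1/\alpha}F^{(\alpha-1)/\alpha}$) and invoke H\"older with conjugate exponents $1/\alpha$ and $1/(1-\alpha)$ (resp.\ $\alpha$ and $\alpha/(\alpha-1)$), only afterwards bounding $\int_0^l F\,dx$ by $l$. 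The two arguments are cousins --- both ultimately rest on the concavity/convexity of $t\mapsto t^{\alpha}$ --- but yours has two small advantages: it treats both regimes with the same tool applied in opposite directions rather than relying on a separate pointwise comparison of $F$ with $F^{\alpha}$, and before the final crude step it actually delivers the slightly sharper bounds with $\bigl(\int_0^l F(x)\,dx\bigr)^{1-\alpha}=(l-\mathbb{E}(X))^{1-\alpha}$ in place of $l^{1-\alpha}$, which reduce to the stated ones since $\int_0^l F\,dx\le l$ and the exponent $1-\alpha$ has the right sign in each case. The paper's route, on the other hand, is marginally more elementary in that it needs only Jensen for a power function. Your exponent bookkeeping and the degenerate case $\alpha=1$ check out, so there is no gap.
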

\begin{proof}
For $0<\alpha\le1$, the CDF of $X$ satisfies $F(x)\le(F(x))^{\alpha}$ for all $x\in (0,l)$, so that Eq.\ (\ref{eq2.1}) yields 
$$
 CE_{\alpha}(X)
%=\frac{1}{\Gamma(\alpha+1)}\int_{0}^{l}F(x)[-\ln F(x)]^{\alpha}\,dx 
\leq \frac{1}{\Gamma(\alpha+1)}\int_{0}^{l}[-F(x)\ln F(x)]^{\alpha}\,dx
=\int_{0}^{l} \varphi_{\alpha}(\eta(x))\,dx,
$$
where 
$$
 \eta(x)=-F(x)\ln F(x)\geq 0,\qquad \varphi_{\alpha}(t)=\frac{t^{\alpha}}{\Gamma(\alpha+1)},
$$
with $\varphi_{\alpha}(t)$ concave in $t\geq 0$, for $0<\alpha\le1$. Hence, from the integral Jensen inequality we obtain 
$$
 CE_{\alpha}(X)
\leq  l  \, \varphi_{\alpha}\left(\frac{1}{l} \int_{0}^{l} \eta(x) \, dx\right)
=\frac{l^{1-\alpha}}{\Gamma(\alpha+1)}\left(-\int_0^l F(x)\ln F(x)\, dx\right)^{\alpha},
$$
this giving the proof due to (\ref{eq1.3}).  The case $\alpha\ge 1$ can be treated similarly. 
\end{proof}
\par
Clearly, under the assumptions of Proposition \ref{prop3.2} one has the following relation for the 
normalized fractional generalized cumulative entropy defined in (\ref{eq2.6}): 
$$
 NCE_{\alpha}(X)
 \left\{
 \begin{array}{ll}
 \leq \,\displaystyle \frac{l^{1-\alpha}}{\Gamma(\alpha+1)}, \ & \hbox{if  \ }0<\alpha\leq 1\\[4mm]
  \geq \,\displaystyle \frac{l^{1-\alpha}}{\Gamma(\alpha+1)}, \ & \hbox{if  \ } \alpha\geq 1.
  \end{array}
  \right.
$$
\par
\par
The next proposition provides some bounds for the fractional generalized cumulative entropy of bounded distributions.
\begin{proposition}\label{prop3.3}
For any random variable $X$ with support $(0,l)$ and with finite $CE_{\alpha}(X)$, for $\alpha>0$, we have
\begin{itemize}
\item[(a)] $CE_{\alpha}(X)\ge D_{\alpha}\,e^{H(X)},$ with $D_{\alpha}=\exp\{\int_{0}^{1}\ln (x(-\ln x)^{\alpha})\,dx\}$ and $H(X)$  given by (\ref{eq1.2});
\item[(b)] $CE_{\alpha}(X)\ge \displaystyle\frac{1}{\Gamma(\alpha+1)} \int_{0}^{l}F(x)[1-F(x)]^{\alpha}\,dx$;
\item[(c)] $CE_{\alpha}(X)\le \displaystyle\frac{l}{\Gamma(\alpha+1)} \left( \frac{\alpha}{e} \right)^{\alpha}$, for $0<\alpha\leq 1$. 
\end{itemize}
\end{proposition}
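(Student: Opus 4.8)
The statement collects three independent bounds, and I would establish each by a different elementary device: a Jensen-type argument for (a), a pointwise logarithmic inequality for (b), and a one-variable maximization for (c). For part (a), the plan is to start from the quantile representation (\ref{eq:Faltern}), which gives $\Gamma(\alpha+1)\,CE_{\alpha}(X)=\int_0^1 \frac{u(-\ln u)^\alpha}{f(F^{-1}(u))}\,du$. The key observation is that Lebesgue measure on $(0,1)$ is a probability measure, so I can apply Jensen's inequality to the convex function $\exp$: writing the integrand as $e^{\psi(u)}$ with $\psi(u)=\ln\big(u(-\ln u)^\alpha\big)-\ln f(F^{-1}(u))$, Jensen yields $\int_0^1 e^{\psi(u)}\,du\ge \exp\!\big(\int_0^1\psi(u)\,du\big)$. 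It then remains to evaluate $\int_0^1\psi(u)\,du$: the first piece is $\int_0^1\ln\big(u(-\ln u)^\alpha\big)\,du=\ln D_\alpha$ by the very definition of $D_\alpha$, while the substitution $u=F(x)$ in (\ref{eq1.2}) shows the second piece equals $-\int_0^1\ln f(F^{-1}(u))\,du=H(X)$. Combining these gives a bound of the form $CE_{\alpha}(X)\ge \frac{D_\alpha}{\Gamma(\alpha+1)}\,e^{H(X)}$, and since $\Gamma(\alpha+1)\le 1$ for $0<\alpha\le 1$ this already implies the stated inequality there. This normalization bookkeeping, together with justifying the change of variables behind (\ref{eq:Faltern}) and assuming finiteness of $H(X)$, is the step I expect to be the main obstacle.

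For part (b), I would invoke the elementary inequality $-\ln y\ge 1-y$, valid for all $y\in(0,1]$ (equivalently $\ln y\le y-1$). Applying it with $y=F(x)\in(0,1]$ and raising the two nonnegative sides to the power $\alpha>0$ gives $[-\ln F(x)]^\alpha\ge[1-F(x)]^\alpha$ pointwise. Multiplying by $F(x)/\Gamma(\alpha+1)\ge 0$ and integrating over $(0,l)$, the definition (\ref{eq2.1}) immediately yields the stated inequality; no delicate step is involved here.

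For part (c), the plan is to bound the integrand pointwise by its maximum. Setting $h(y)=y(-\ln y)^\alpha$ on $(0,1]$ and substituting $t=-\ln y\ge 0$ reduces the task to maximizing $e^{-t}t^\alpha$ over $t\ge 0$; differentiation gives the unique critical point $t=\alpha$, which is a maximum with value $(\alpha/e)^\alpha$. Hence $F(x)[-\ln F(x)]^\alpha\le(\alpha/e)^\alpha$ for every $x$, and integrating over $(0,l)$ in (\ref{eq2.1}) gives $CE_{\alpha}(X)\le \frac{l}{\Gamma(\alpha+1)}(\alpha/e)^\alpha$. I note that this maximization is valid for every $\alpha>0$, so the restriction $0<\alpha\le 1$ in the statement is dictated only by the paper's scope and is not actually needed for this bound.
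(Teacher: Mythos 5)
Your proof is correct, and for parts (a) and (c) it takes a route that differs in packaging, though less in substance, from the paper's. For (b) you and the paper do exactly the same thing: $\ln u\le u-1$ applied at $u=F(x)$. For (a) the paper invokes the log-sum inequality for the pair $f(x)$ and $F(x)[-\ln F(x)]^{\alpha}$; after the substitution $u=F(x)$ that is precisely your application of Jensen's inequality to $\exp$ against Lebesgue measure on $(0,1)$, and both derivations land on the same bound $CE_{\alpha}(X)\ge \frac{D_{\alpha}}{\Gamma(\alpha+1)}\,e^{H(X)}$. Your explicit bookkeeping of the $\Gamma(\alpha+1)$ factor is the valuable part: as you note, the printed inequality $CE_{\alpha}(X)\ge D_{\alpha}e^{H(X)}$ only follows when $\Gamma(\alpha+1)\le 1$, i.e.\ for $0<\alpha\le 1$, and indeed for large $\alpha$ the statement as printed is false --- for the uniform law on $(0,l)$ one has $CE_{\alpha}(X)=l\,2^{-(\alpha+1)}$, $e^{H(X)}=l$ and $D_{\alpha}=e^{-1-\alpha\gamma}$, and the inequality $2^{-(\alpha+1)}\ge e^{-1-\alpha\gamma}$ breaks down for $\alpha\gtrsim 2.65$ --- so the version you actually prove is the correct one. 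For (c) the paper uses concavity of $u(-\ln u)^{\alpha}$ on $(0,1)$ for $\alpha\in(0,1]$ and bounds it by its tangent line at $\theta=e^{-\alpha}$; since that tangent is horizontal at the maximizer, it yields numerically the same constant as your direct maximization $\sup_{t\ge 0}e^{-t}t^{\alpha}=(\alpha/e)^{\alpha}$, but your argument dispenses with concavity and therefore, as you correctly observe, extends the bound to all $\alpha>0$, whereas the paper's tangent-line argument genuinely needs $\alpha\le 1$.
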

\begin{proof}
The first inequality can be reached applying the log-sum inequality (see, for instance, \cite{CoverThomas}).
Recalling (\ref{eq2.1}), the second inequality can be obtained by using $\ln u\le u-1$ for $0<u\le1$. 
Similarly, we note that $u(- \log u)^{\alpha}$ is nonnegative and concave in $u\in (0,1)$ for all $\alpha\in (0,1]$, so that 
$u(- \log u)^{\alpha}\leq (- \log \theta)^{\alpha-1}[ \alpha \theta -u (\alpha +\log \theta)]$ 
for all $u\in (0,1)$, with $\alpha\in (0,1]$ and  $\theta\in (0,1]$. Hence, from (\ref{eq2.1}) we have 
$$
 CE_{\alpha}(X)\leq  \frac{1}{\Gamma(\alpha+1)} (- \log \theta)^{\alpha-1}
 \left\{ \alpha \theta l -[l- \mathbb{E}(X)] (\alpha +\log \theta)\right\}.
$$
By taking $\theta=e^{-\alpha}$ we finally obtain the third inequality. 
\end{proof}
\par
For $\alpha=1$, the results (a) and (b) of Proposition \ref{prop3.3} become the relations given in 
Propositions 4.2 and 4.3 of \cite{di2009cumulative},  respectively. Clearly, by resorting to Fubini's theorem 
the inequality given in (b) can be expressed as 
$$
 CE_{\alpha}(X)\ge \mathbb E[\psi_{\alpha}(X)], \qquad \hbox{where \quad 
 $\psi_{\alpha}(x)=\frac{1}{\Gamma(\alpha+1)} \int_{x}^{l} [1-F(y)]^{\alpha}\,dy, \quad \alpha>0$.}
$$
%
%The next result is immediate from the above proposition.
%%
%\begin{corollary}
%Let $a>0$ and $b\ge0$. Then, $(a)$ $aX$ is IDFCE if $X$ is, and $(b)$ $X+b$ is IDFCE if $X$ is.
%\end{corollary}
%
% ========================
\subsection{Some comparisons}
% ========================
Let us now present some ordering properties of the fractional generalized cumulative
entropy. We refer to 
the stochastic orders recalled in Section \ref{sec:intro}. 
\par
In the following example we show that in general the usual stochastic ordering does not imply the ordering 
of fractional generalized cumulative entropies.
\begin{example}\label{ex3.1*}
Consider two random variables having power distribution, with CDF $F(x)=(x/l)^{b}$ and $G(x)=(x/l)^{d}$, 
where $0\le x\le l$ and $b,d>0$. Further, for $b\le d$ we have $X\le_{st}Y.$ 
Moreover, recalling (ii) of Table \ref{table:examples}, 
\begin{eqnarray*}
CE_{\alpha}(X)=\frac{l \,b^{\alpha}}{(b+1)^{\alpha+1}}, \qquad 
CE_{\alpha}(Y)=\frac{l \,d^{\alpha}}{(d+1)^{\alpha+1}}, \qquad \alpha >0.
\end{eqnarray*}
However, for some values of the parameters and some choices of $\alpha$ the condition 
$CE_{\alpha}(X)\le CE_{\alpha}(Y)$ is not fulfilled (see Figure \ref{fig:Pdfderived}).
%
% ===================== FIGURE 2
\begin{figure}[t]

\center
\includegraphics[height=1.8in]{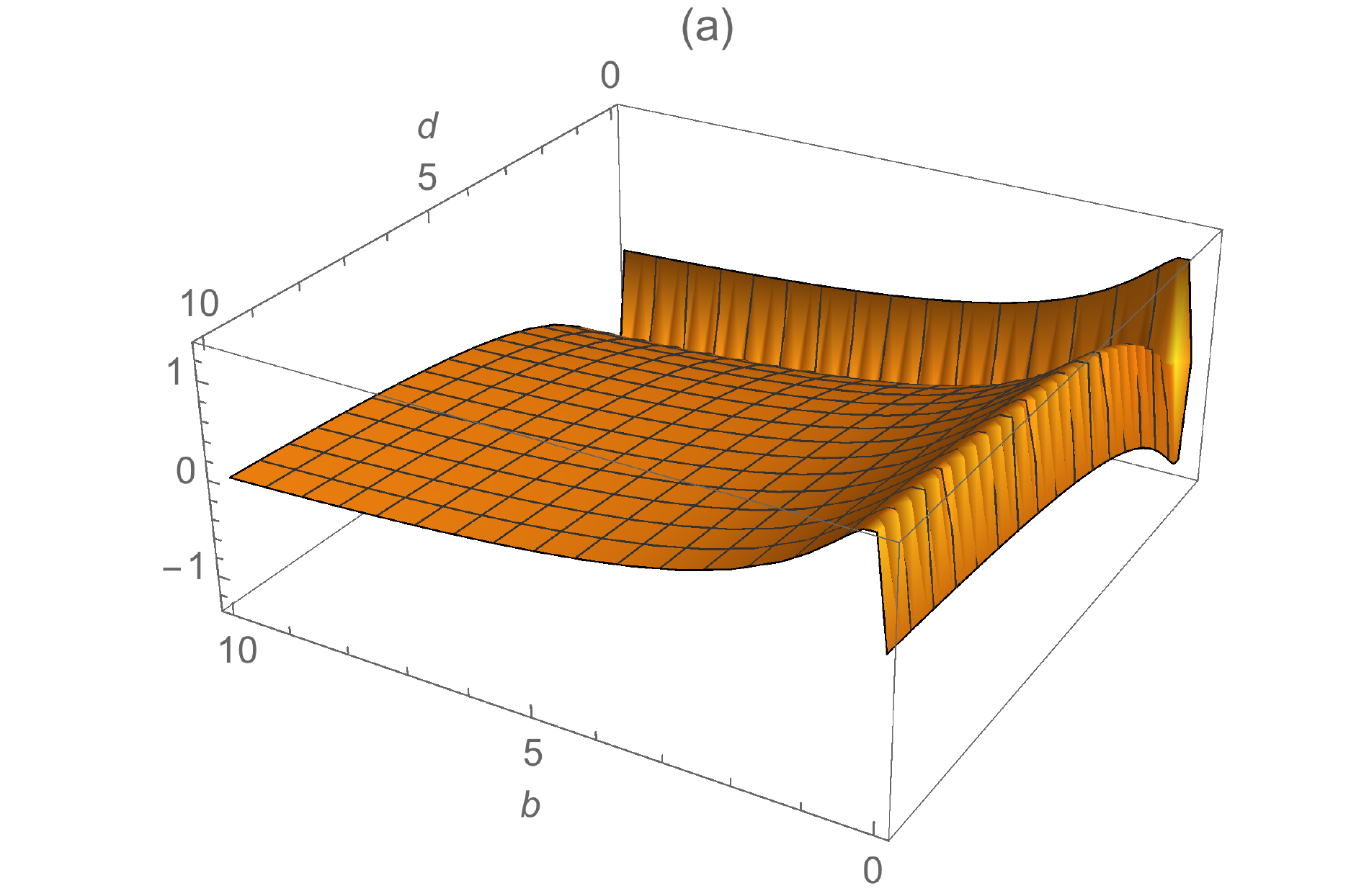}
\;\;
\includegraphics[height=1.8in]{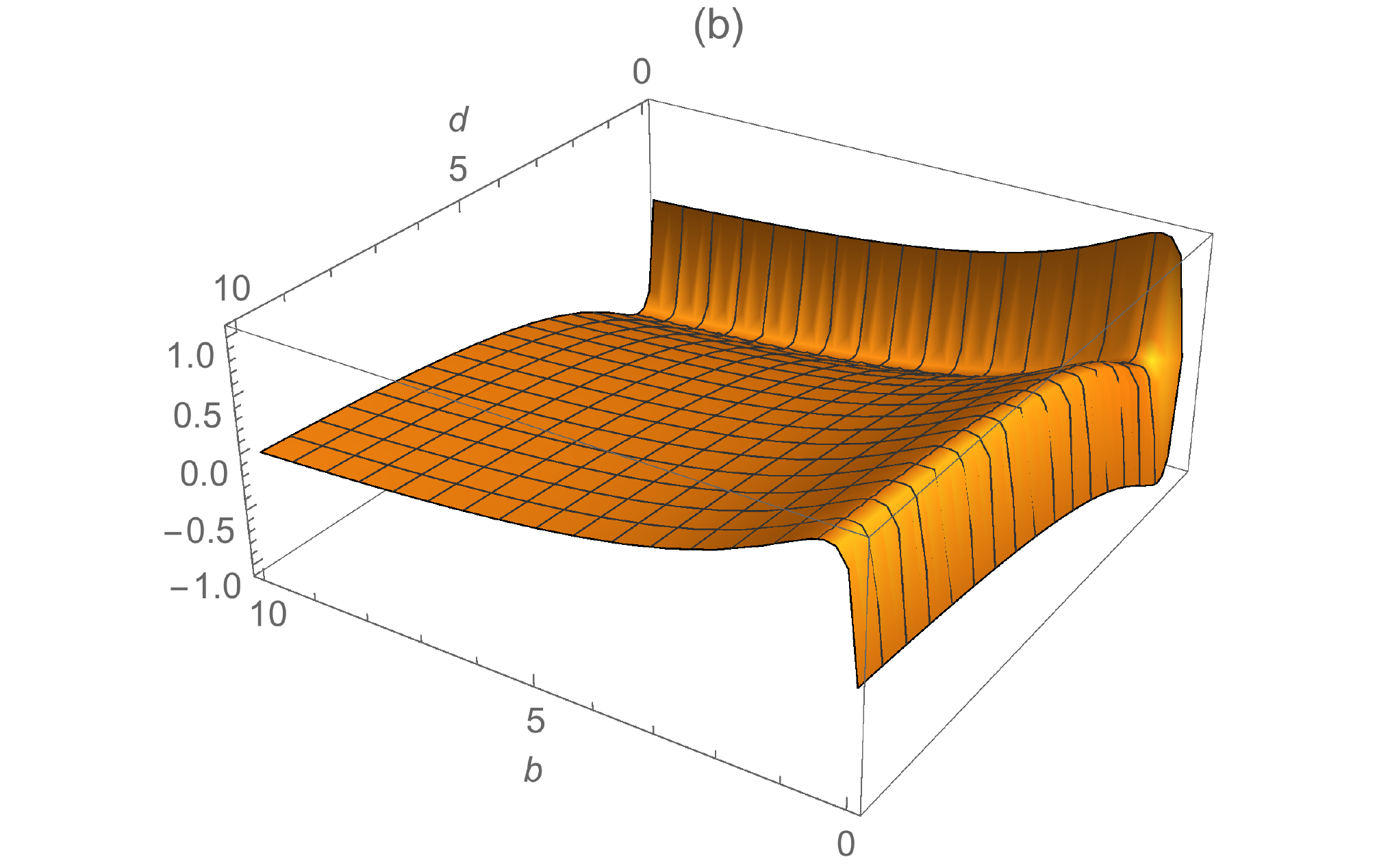}
\\[2mm]
\includegraphics[height=1.8in]{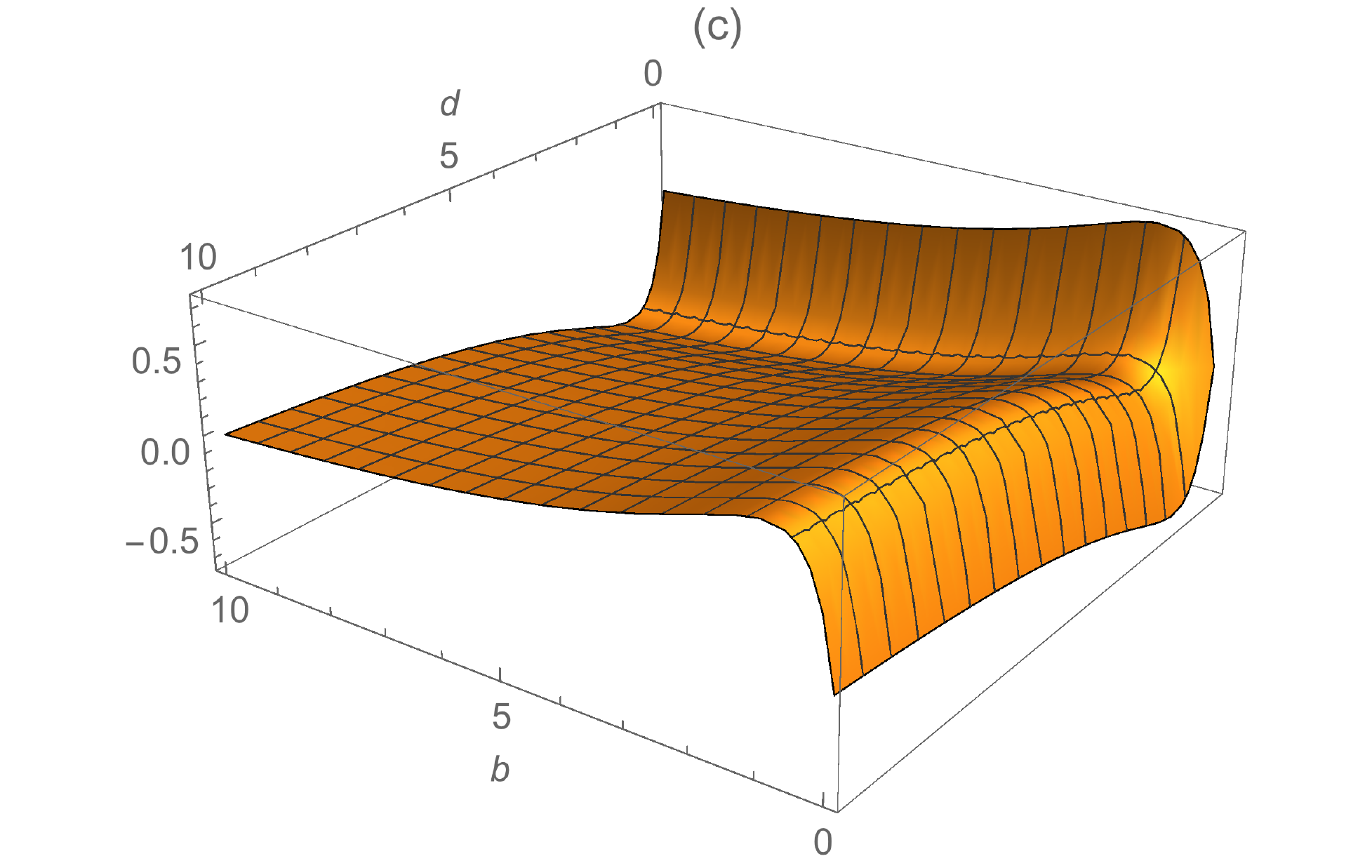}
\;\;
\includegraphics[height=1.55in]{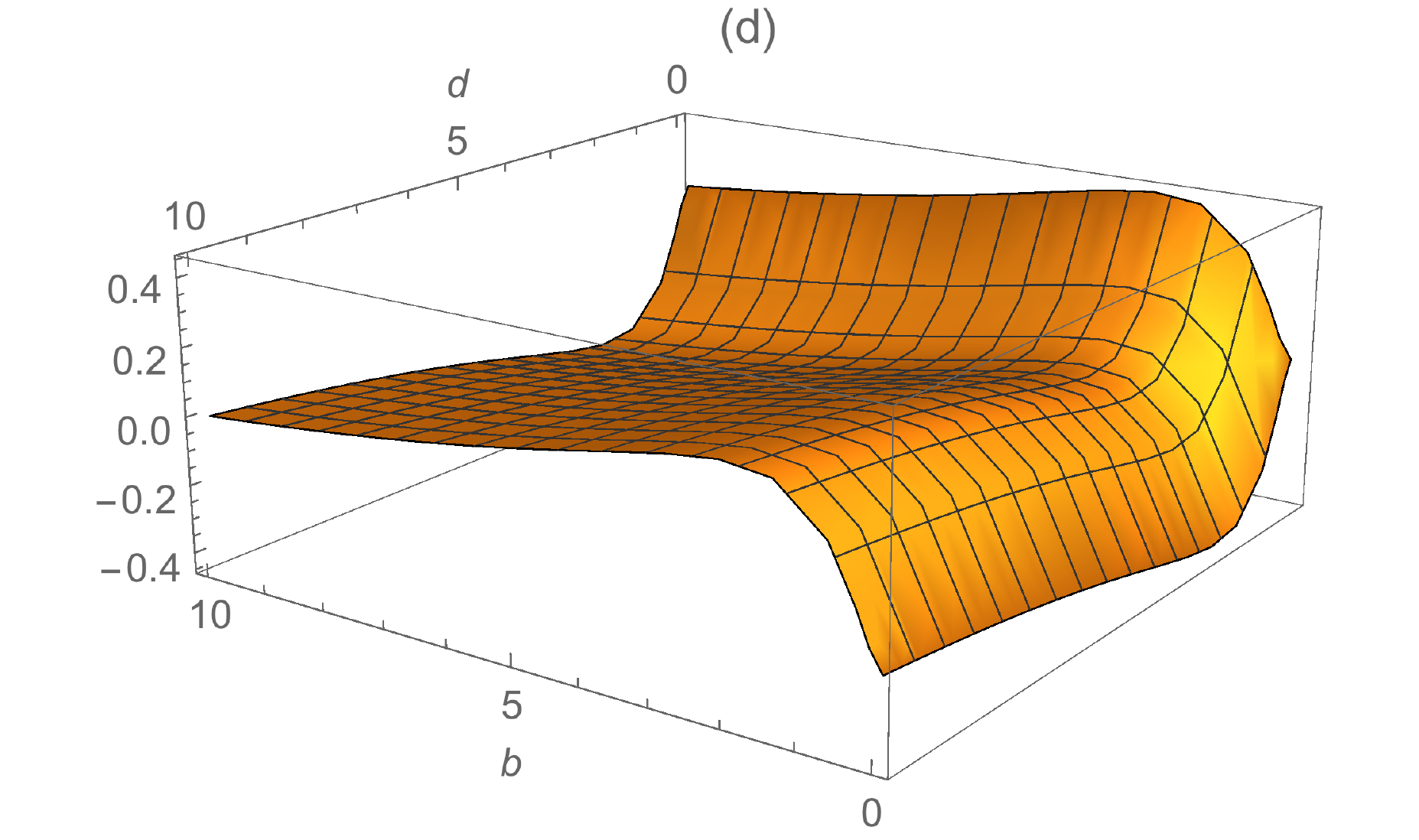}
\caption{
Graphs of $CE_{\alpha}(X)-CE_{\alpha}(Y)$ as considered in Example \ref{ex3.1*} for $b,d\in(0,10)$, $l=3$,  and 
(a) $\alpha=0.25$, (b) $\alpha=0.5$, (c) $\alpha=1$, (d) $\alpha=2$. 
}
\label{fig:Pdfderived}
\end{figure}
% =====================
\end{example}
\par
Now, we obtain some stochastic ordering properties of the considered measure. 
We show that more dispersed random
variables produce larger fractional generalized cumulative entropies. 
\begin{theorem}\label{th2.1}
Let $X$ and $Y$ be  nonnegative absolutely continuous random variables with 
PDF's $f$ and $g$, and CDF's $F$ and $G$, respectively. Then, $X\le_{d}Y$ implies that 
$CE_{\alpha}(X)\le CE_{\alpha}(Y)$, for all $\alpha>0$.
\end{theorem}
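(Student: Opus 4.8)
The plan is to exploit the quantile-based representation (\ref{eq:Faltern}) of the fractional generalized cumulative entropy, which reduces the comparison to a pointwise inequality between density-quantile functions. Since $X$ and $Y$ are absolutely continuous, I would first write both measures in the form
\[
 CE_{\alpha}(X)=\frac{1}{\Gamma(\alpha+1)}\int_{0}^{1}\frac{u\,(-\ln u)^{\alpha}}{f(F^{-1}(u))}\,du,
 \qquad
 CE_{\alpha}(Y)=\frac{1}{\Gamma(\alpha+1)}\int_{0}^{1}\frac{u\,(-\ln u)^{\alpha}}{g(G^{-1}(u))}\,du,
\]
so that the two entropies are integrals over the common domain $(0,1)$ of the same nonnegative weight $u\,(-\ln u)^{\alpha}$, divided by the respective density-quantile functions.

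The key step is to recall the standard characterization of the dispersive order in terms of density-quantile functions (see \cite{Shaked}): for absolutely continuous random variables, $X\le_{d}Y$ holds if and only if
\[
 f(F^{-1}(u))\ge g(G^{-1}(u)) \qquad \text{for all } u\in(0,1).
\]
I would justify this by differentiating the defining inequality $F^{-1}(v)-F^{-1}(u)\le G^{-1}(v)-G^{-1}(u)$, which says precisely that $G^{-1}-F^{-1}$ is nondecreasing on $(0,1)$; hence $(G^{-1})'(u)\ge (F^{-1})'(u)$, and using $(F^{-1})'(u)=1/f(F^{-1}(u))$ together with the analogue for $G$ gives the stated density-quantile inequality, equivalently $1/f(F^{-1}(u))\le 1/g(G^{-1}(u))$.

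With these two ingredients the conclusion is immediate. Because the weight $u\,(-\ln u)^{\alpha}$ is nonnegative on $(0,1)$ for every $\alpha>0$, the pointwise bound $1/f(F^{-1}(u))\le 1/g(G^{-1}(u))$ multiplies through and integrates to
\[
 CE_{\alpha}(X)=\frac{1}{\Gamma(\alpha+1)}\int_{0}^{1}\frac{u\,(-\ln u)^{\alpha}}{f(F^{-1}(u))}\,du
 \le \frac{1}{\Gamma(\alpha+1)}\int_{0}^{1}\frac{u\,(-\ln u)^{\alpha}}{g(G^{-1}(u))}\,du=CE_{\alpha}(Y),
\]
which is the desired inequality for all $\alpha>0$.

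I do not expect a serious obstacle here: the argument is essentially a transfer of a monotone pointwise inequality through a nonnegative-weighted integral. The only point that requires care is the density-quantile characterization of $\le_{d}$, namely moving from the quantile-difference definition to the differential inequality $f(F^{-1}(u))\ge g(G^{-1}(u))$; this is where the absolute continuity of $X$ and $Y$ is genuinely used (to guarantee differentiability of the quantile functions and positivity of the densities on the relevant supports), and it is the step I would state most explicitly, citing \cite{Shaked}.
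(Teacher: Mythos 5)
Your proof is correct and follows essentially the same route as the paper: both use the quantile representation (\ref{eq:Faltern}) and the density-quantile characterization $f(F^{-1}(u))\ge g(G^{-1}(u))$ of the dispersive order from \cite{Shaked}, then integrate the pointwise inequality against the nonnegative weight $u(-\ln u)^{\alpha}$. The only difference is that you additionally sketch a derivation of that characterization by differentiating the quantile-difference inequality, whereas the paper simply cites it.
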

\begin{proof}
From the representation given in (\ref{eq:Faltern}), for $\alpha>0$ one has 
\begin{eqnarray*}
CE_{\alpha}(X)-CE_{\alpha}(Y)
= \frac{1}{\Gamma(\alpha+1)} \int_{0}^{1} u\,(-\ln u)^{\alpha}
\left[\frac{1}{f(F^{-1}(u))}-\frac{1}{g(G^{-1}(u))}\right]du.
\end{eqnarray*}
The thesis then immediately follows recalling that $X\le_{d}Y$ if and only if 
$f(F^{-1}(u)) \geq g(G^{-1}(u))$ for all $u\in (0,1)$ (see Section 3.B of \cite{Shaked}). 
\end{proof}
\par
The following  comparison result involves the hazard rate order. Moreover, we recall that 
$X$ is said to be decreasing failure rate (DFR) if  $\bar{F}$ is logconvex.
\begin{theorem}
Let the random variables $X$ and $Y$ satisfy the same assumptions of Theorem \ref{th2.1}. 
Further, assume that $X\le_{hr}Y$ and let $X$ or $Y$ be DFR. 
Then, we have $CE_{\alpha}(X)\le CE_{\alpha}(Y)$ for all $\alpha>0$. 
\end{theorem}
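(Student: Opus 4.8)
The plan is to reduce the assertion to Theorem \ref{th2.1}, which already establishes that $X\le_d Y$ implies $CE_{\alpha}(X)\le CE_{\alpha}(Y)$ for every $\alpha>0$. Hence it suffices to prove that, under the present hypotheses, the hazard rate order together with the DFR assumption forces the dispersive order $X\le_d Y$. This is a classical implication in reliability theory (see Section 3.B of \cite{Shaked}), but I would verify it directly so that the argument is self-contained.

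To do so, I recall the characterization used in the proof of Theorem \ref{th2.1}: for absolutely continuous variables, $X\le_d Y$ is equivalent to $f(F^{-1}(u))\ge g(G^{-1}(u))$ for all $u\in(0,1)$, which upon setting $u=F(x)$ reads $f(x)\ge g\bigl(G^{-1}(F(x))\bigr)$. First I would introduce $y=y(x):=G^{-1}(F(x))$, so that $F(x)=G(y)$ and therefore $\bar F(x)=\bar G(y)$. Since the hazard rate order implies the usual stochastic order, i.e.\ $X\le_{hr}Y\Rightarrow X\le_{st}Y$, one has $F(x)\ge G(x)$, and consequently $y=G^{-1}(F(x))\ge x$.

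Writing $h_X=f/\bar F$ and $h_Y=g/\bar G$ for the hazard rates, and dividing the target inequality $f(x)\ge g(y)$ by the common value $\bar F(x)=\bar G(y)$, the claim becomes $h_X(x)\ge h_Y(y)$. The hazard rate order supplies $h_X(z)\ge h_Y(z)$ at every point $z$. If $Y$ is DFR, then $h_Y$ is nonincreasing, so $h_Y(y)\le h_Y(x)\le h_X(x)$, using $y\ge x$; if instead $X$ is DFR, then $h_X$ is nonincreasing, so $h_Y(y)\le h_X(y)\le h_X(x)$. In either case $h_X(x)\ge h_Y(y)$, which yields $X\le_d Y$, and the desired conclusion $CE_{\alpha}(X)\le CE_{\alpha}(Y)$ then follows from Theorem \ref{th2.1}.

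The main obstacle is essentially organizational rather than analytic: one must handle the two DFR cases symmetrically and check that the substitution $y=G^{-1}(F(x))$ is well defined on the relevant supports, where the densities are positive. Once the reduction to the dispersive order is secured, no estimation of the fractional integral in \eqref{eq2.1} is required, since Theorem \ref{th2.1} carries out all the analytic work.
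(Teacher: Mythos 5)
Your proposal is correct and follows essentially the same route as the paper: the paper's proof consists of invoking Theorem 2.1(b) of Bagai and Kochar \cite{bagai1986tail} for the implication ($X\le_{hr}Y$ together with $X$ or $Y$ DFR) $\Rightarrow$ $X\le_{d}Y$, and then applying Theorem \ref{th2.1}. Your direct verification of that implication, via the substitution $y=G^{-1}(F(x))$, the identity $\bar F(x)=\bar G(y)$, and the two-case monotone comparison of the hazard rates, is a correct self-contained rendering of the cited result, so the only difference is that you prove the lemma the paper merely cites.
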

\begin{proof}
The proof follows from Theorem 2.1(b) of Bagai and Kochar
\cite{bagai1986tail} and the result given in Theorem \ref{th2.1}. 
\end{proof}
\par
In various applied contexts it is appropriate to compare random measures arising from possibly ordered systems. 
Let us then face the following problem: to express the fractional generalized cumulative entropy of $X$ in terms of 
suitable quantities depending on $X$ and $Y$, where the latter random variables are ordered in the 
usual stochastic order sense.  
\begin{proposition}
Let $X$ and $Y$ be  nonnegative random variables  with finite but unequal means, 
with CDF's $F$ and $G$ respectively, and such that 
$X\le_{st}Y$  or  $Y\le_{st} X$. If condition (\ref{eq:defxial}) holds and if $\mathbb E[\xi_{\alpha}(Y)]<\infty$, then for $\alpha>0$
\begin{eqnarray}
 CE_{\alpha}(X)=\mathbb E[\xi_{\alpha}(Y)]+\mathbb E[\xi'_{\alpha}(Z)]\,[\mathbb E(X)- \mathbb E(Y)], 
\end{eqnarray}
where $Z$ is an absolutely continuous nonnegative random variable with PDF
$$
 f_{Z}(x)=\frac{ G(x)-F(x)}{\mathbb E(X)- \mathbb E(Y)},
 \qquad x>0.
$$
\end{proposition}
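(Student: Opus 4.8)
The plan is to reduce everything to the representation $CE_\alpha(X)=\mathbb E[\xi_\alpha(X)]$ furnished by Proposition \ref{prop:CEXxi}, write both $CE_\alpha(X)$ and $\mathbb E[\xi_\alpha(Y)]$ as integrals of the \emph{same} weight $\xi_\alpha$ against the two distributions, and then convert their difference into an integral against $G-F$ by a single integration by parts. Before doing so, I would verify that $f_Z$ is a genuine density: the hypothesis $X\le_{st}Y$ or $Y\le_{st}X$ forces $G-F$ to keep a constant sign, and since $\mathbb E(X)-\mathbb E(Y)=\int_0^\infty[G(x)-F(x)]\,dx$ (write $\mathbb E(X)=\int_0^\infty\bar F$, $\mathbb E(Y)=\int_0^\infty\bar G$), one gets $f_Z\ge 0$ with $\int_0^\infty f_Z(x)\,dx=1$; the assumption of finite but unequal means is exactly what makes the denominator nonzero and finite.

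With that in place, by Proposition \ref{prop:CEXxi} and the finiteness of $\mathbb E[\xi_\alpha(Y)]$ I would write
$$
 CE_\alpha(X)-\mathbb E[\xi_\alpha(Y)]=\int_0^{\infty}\xi_\alpha(x)\,d[F(x)-G(x)].
$$
Integrating by parts with antiderivative $F-G$ and using that, from (\ref{eq:defxial}), $\xi_\alpha$ is absolutely continuous with $\xi'_\alpha(x)=-\frac{1}{\Gamma(\alpha+1)}[-\ln F(x)]^\alpha$, this becomes
$$
 CE_\alpha(X)-\mathbb E[\xi_\alpha(Y)]=\Big[\xi_\alpha(x)\,(F(x)-G(x))\Big]_0^{\infty}-\int_0^{\infty}\xi'_\alpha(x)\,[F(x)-G(x)]\,dx.
$$
The boundary term at $x=0$ vanishes since $F(0)=G(0)=0$, and at the upper end it vanishes because $F-G\to 0$ while $\xi_\alpha$ decreases to $0$.

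Finally, since $-[F(x)-G(x)]=G(x)-F(x)=[\mathbb E(X)-\mathbb E(Y)]\,f_Z(x)$, the surviving integral factors as
$$
 CE_\alpha(X)-\mathbb E[\xi_\alpha(Y)]=[\mathbb E(X)-\mathbb E(Y)]\int_0^{\infty}\xi'_\alpha(x)\,f_Z(x)\,dx=[\mathbb E(X)-\mathbb E(Y)]\,\mathbb E[\xi'_\alpha(Z)],
$$
which rearranges to the claimed identity. I expect the main obstacle to be the rigorous handling of the boundary terms in the integration by parts, together with the integrability justifications needed to pass between the Stieltjes and Lebesgue forms (Fubini or dominated convergence); this is precisely where the hypotheses $CE_\alpha(X)<\infty$, $\mathbb E[\xi_\alpha(Y)]<\infty$ and the decay of $\xi_\alpha$ at the endpoint enter. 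The stochastic ordering assumption itself plays only the mild role of guaranteeing that $f_Z$ is a legitimate probability density, rather than entering the analytic core of the argument.
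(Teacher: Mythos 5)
Your argument is correct and is in substance the same as the paper's: the paper, after invoking the representation $CE_{\alpha}(X)=\mathbb E[\xi_\alpha(X)]$ from Proposition \ref{prop:CEXxi}, simply cites the probabilistic analogue of the mean value theorem (Theorem 4.1 of Di Crescenzo \cite{di1999probabilistic}), and what you have written is exactly the proof of that theorem specialized to $g=\xi_\alpha$ (the integration by parts against $F-G$, the sign/normalization check on $f_Z$, and the vanishing boundary terms). So you have merely unfolded the cited black box rather than taken a different route.
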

\begin{proof}
Recalling Eq.\  (\ref{eq:CEXxial}), i.e.\   $CE_{\alpha}(X)=\mathbb E[\xi_\alpha(X)]$, the proof follows from the 
probabilistic analogue of the mean value theorem given in Theorem $4.1$ of Di Crescenzo \cite{di1999probabilistic}.
\end{proof}
\par 
We remark that, due to (\ref{eq:defxial}), one has 
$\xi'_{\alpha}(x)= - \frac{1}{\Gamma(\alpha+1)} [-\ln F(x)]^{\alpha}  \leq 0$ for all $x> 0$, and $\alpha>0$. 

%========================
\section{Dynamic version of fractional generalized cumulative entropy}\label{sect:dynam}
%========================
In this section we develop a dynamic version of the fractional generalized cumulative entropy. 
To this aim we take as reference a notion from reliability theory. Suppose that a system, started at time 0, 
is seen to be failed at a pre-specified inspection time, say $t\in (0,l)$. In this case the uncertainty relies on the past, 
in the sense that the unknown system failure instant has occurred in $(0,t)$, previous than the inspection time.  
Let $X$ be the random variable that denotes the failure instant. We can consider the fractional generalized 
cumulative entropy of the past time 
$$
 X_{(t)}:=[X\,|\,X\leq t], \qquad t\in (0,l).
$$ 
Various measures have been proposed in the literature for $X_{(t)}$, such as the cumulative past entropy given in 
(\ref{eq1.3t}). Indeed, one has $CE(X;t)=CE(X_{(t)})$, for $t\in (0,l)$. 
Here we can define the {\em dynamic fractional generalized cumulative entropy}, for $\alpha>0$, as 
\begin{eqnarray}\label{eq2.1*}
 CE_{\alpha}(X;t):=CE_{\alpha}(X_{(t)})
 = \frac{1}{\Gamma(\alpha+1)} \int_{0}^{t}\frac{F(x)}{F(t)}
 \left[-\ln\left(\frac{F(x)}{F(t)}\right)\right]^{\alpha} dx, 
 \qquad  t\in (0,l).
\end{eqnarray}
Clearly, if $\alpha\to 1$ then $CE_{\alpha}(X;t)$ tends to cumulative past entropy $CE(X;t)$ given in (\ref{eq1.3t}). 
Moreover, from (\ref{eq2.1*}) we have that $CE_{\alpha}(X;t)$ reduces to the fractional generalized  cumulative entropy  
(\ref{eq2.1}) when $t\to l$. 
\begin{example}\label{ex:CEXt}
(i) Let $X$ have power distribution in the interval $(0,l)$ with parameter $b$, 
as in the case (ii) of Table \ref{table:examples}. 
Then, the dynamic fractional generalized cumulative entropy is given by 
$$ 
 CE_{\alpha}(X;t)=\frac{t\,b^{\alpha} }{ (b+1)^{\alpha+1}},  \qquad t\in (0,l).
$$ 
(ii) Let $X$ have Fr\'echet distribution with parameters $b$ and 1, i.e.\ $F(x)=e^{- b \, x^{-1}}$, $x>0$, with $b>0$. Then, 
from (\ref{eq2.1*}) we have 
\begin{equation}
 CE_{\alpha}(X;t)=\frac{b}{\alpha}\left( \Big( \alpha+\frac{b}{t}\Big) E_{\alpha}\Big(\frac{b}{t} \Big)e^{ b t}-1\right), 
 \qquad t>0,
 \label{eq:CEalphaXt}
\end{equation}
where $E_{\alpha}$ is defined in (\ref{eq:expintf}). In this case, 
some plots of $CE_{\alpha}(X;t)$ are given in Figure \ref{fig:Figuretre}.
% ============================== Figure 3 ==============================
%
\begin{figure}[t]
\centering 
 \includegraphics[scale=0.45]{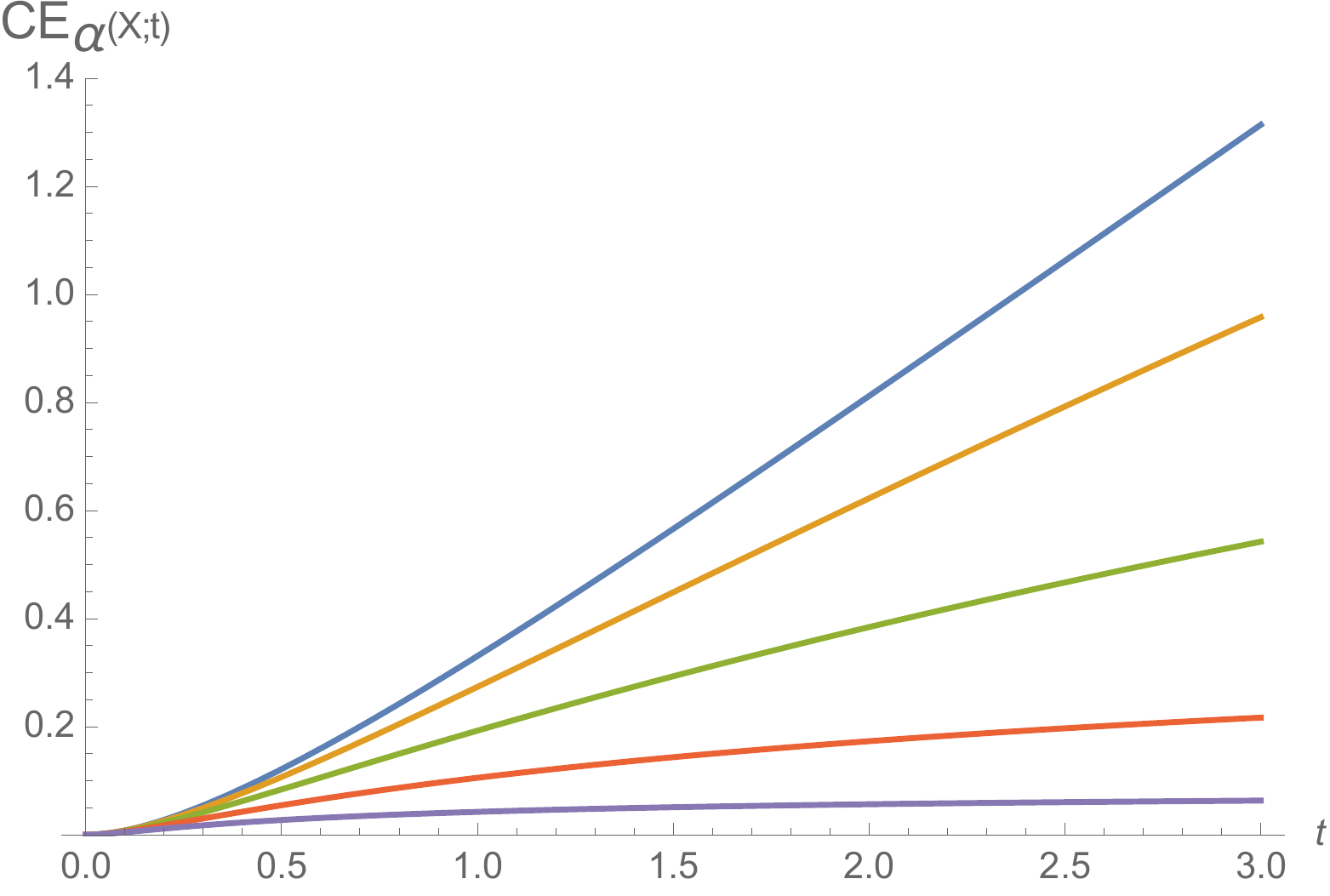}
 \;
 \includegraphics[scale=0.45]{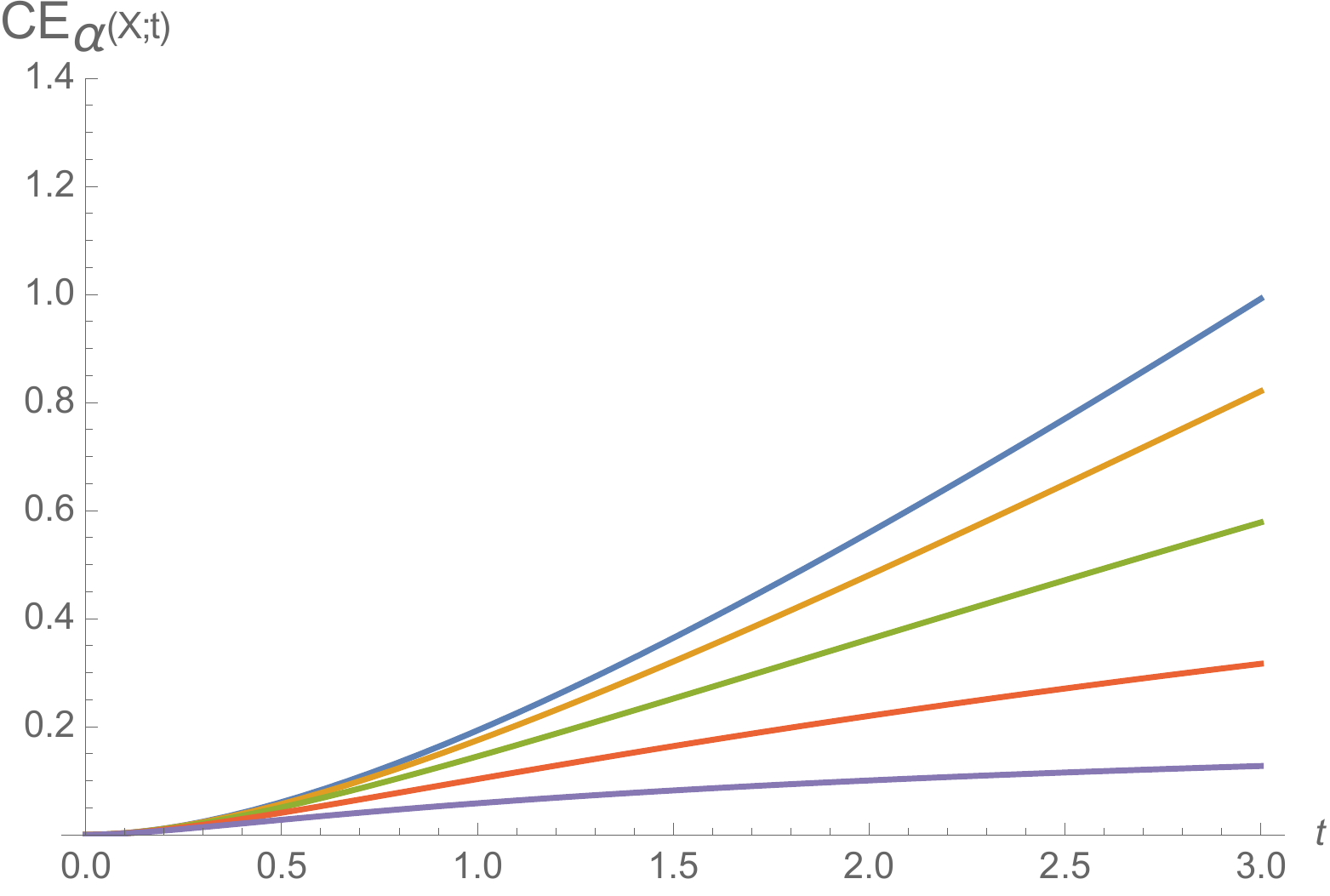}
\caption{
The dynamic fractional generalized cumulative entropy (\ref{eq:CEalphaXt}) of the Fr\'echet distribution with parameters $b$ and 1, 
for $\alpha=0.25$, $0.5$, $1$, $2$, $4$ (from top to bottom), with $b=1$ (left) and $b=3$ (right). 
}
\label{fig:Figuretre}
\end{figure}
\end{example}
\par
Similarly to Proposition \ref{prop2.1}, we get the following result concerning the effect of an affine transformation.
\begin{proposition}\label{prop2.3}
Let $Y=cX+b$, where $c>0$ and $b\ge0.$ Then, $CE_{\alpha}(cX+b;t)=c\, CE_{\alpha}\left(X;\frac{t-b}{c}\right)$ 
for all $\alpha >0$ and all $t\in(b,b+c\,l)$.
\end{proposition}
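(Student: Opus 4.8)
The plan is to reduce the claim to a change of variables in the integral defining the dynamic measure in (\ref{eq2.1*}), exploiting the relation between the CDFs of $Y=cX+b$ and $X$ already noted before Proposition \ref{prop2.1}.

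First I would record that $F_Y(y)=F_X\big(\frac{y-b}{c}\big)$ for all $y\in\mathbb R$, which follows at once from the identity of events $\{Y\le y\}=\{X\le \frac{y-b}{c}\}$. Since $X$ has support $(0,l)$, this gives $F_Y(y)=0$ for $y\le b$; hence the integrand in
$$CE_{\alpha}(Y;t)=\frac{1}{\Gamma(\alpha+1)}\int_{0}^{t}\frac{F_Y(y)}{F_Y(t)}\left[-\ln\frac{F_Y(y)}{F_Y(t)}\right]^{\alpha}dy$$
vanishes on $(0,b]$, so the effective lower limit of integration is $b$. The restriction $t\in(b,b+c\,l)$ guarantees $\frac{t-b}{c}\in(0,l)$, so that the right-hand side $CE_{\alpha}\big(X;\frac{t-b}{c}\big)$ is well defined.

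Next I would substitute $x=\frac{y-b}{c}$ (equivalently $y=cx+b$, $dy=c\,dx$), which sends $y=b$ to $x=0$ and $y=t$ to $x=\frac{t-b}{c}$, and replaces $F_Y(y)$ by $F_X(x)$ and $F_Y(t)$ by $F_X\big(\frac{t-b}{c}\big)$. The normalized ratio is thereby invariant, namely $\frac{F_Y(y)}{F_Y(t)}=\frac{F_X(x)}{F_X((t-b)/c)}$, the logarithmic bracket transforms accordingly, and the Jacobian contributes a single factor $c$. Collecting terms shows that the resulting integral equals exactly $c$ times the integral defining $CE_{\alpha}\big(X;\frac{t-b}{c}\big)$, which is the asserted identity.

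There is no substantial obstacle here: the only points requiring care are the treatment of the lower integration limit when $b>0$ (handled by the vanishing of $F_Y$ below $b$) and the verification that the normalizing denominator $F_Y(t)$ transforms correctly under the substitution. Both are routine, and the computation closely mirrors the shift-and-scale argument used for the static measure in Proposition \ref{prop2.1}; consequently the proof can be kept brief.
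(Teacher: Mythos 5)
Your proof is correct and follows exactly the route the paper intends: the paper omits the argument, noting only that it is analogous to Proposition \ref{prop2.1} and rests on the relation $F_Y(y)=F_X\left(\frac{y-b}{c}\right)$, which is precisely the change-of-variables computation you carry out. Your additional care with the lower integration limit when $b>0$ and with the admissible range of $t$ is a sound elaboration of the same idea, not a different approach.
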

\begin{remark}
Under the same conditions specified in Remark \ref{rem:symmCE}, from Eq.\ (\ref{eq2.1*}) 
one can verify that for a symmetric distribution the following relation holds: 
$$
 CE_{\alpha}(X;t)=CRE_{\alpha}(X;l-t)
  \qquad \hbox{for all $\alpha>0$ \ and \ all $t\in (0,l)$,}
$$
where, in analogy with (\ref{eq1.5}), 
\begin{equation}
 CRE_{\alpha}\left(X;t\right):= \frac{1}{\Gamma(\alpha+1)}
 \int_{t}^{l}\frac{\bar{F}(x)}{\bar{F}(t)} \left[- \ln\left(\frac{\bar{F}(x)}{\bar{F}(t)}\right)\right]^{\alpha} dx, 
 \qquad t\in (0,l)
 \label{eq:CREalphaXt}
\end{equation}
is the dynamic fractional generalized cumulative residual entropy of $X$. 
\end{remark}
\par
It is worth mentioning that the dynamic measures $CE_{\alpha}(X;t)$ and $CRE_{\alpha}\left(X;t\right)$ not only 
provide respectively a generalization of the cumulative past entropy and of the cumulative residual entropy, attained in the limit  
$\alpha \to 1$. They also constitute a further extension of well-known measures of interest in reliability theory. 
Indeed, from Eqs.\ (\ref{eq2.1*}) and (\ref{eq:CREalphaXt}) we have respectively 
$$
  \lim_{\alpha\to 0^+} CE_{\alpha}(X;t)=\tilde \mu(t), 
  \qquad t\in (0,l),
$$
and 
$$
   \lim_{\alpha\to 0^+} CRE_{\alpha}(X;t)= {\rm mrl}(t), 
  \qquad t\in (0,l),
$$
where $\tilde \mu (t)$ is the  mean inactivity time (\ref{eq:tildemu}), and 
where ${\rm mrl}(t)$ is the mean residual life of $X$, defined in (\ref{eq:mrlt}).  
\par
Similarly to Proposition \ref{prop3.2}, we obtain the following bounds for the dynamic measure defined in (\ref{eq2.1*}), 
for $t\in (0,l)$:
$$
CE_{\alpha}\left(X; t\right)   
\left\{
 \begin{array}{ll}
 \leq \, \displaystyle \frac{t^{1-\alpha}}{\Gamma(\alpha+1)} \,(CE(X;t))^{\alpha}, \ & \hbox{if  \ }0<\alpha\leq 1\\[4mm]
  \geq \, \displaystyle \frac{t^{1-\alpha}}{\Gamma(\alpha+1)} \,(CE(X;t))^{\alpha}, \ & \hbox{if  \ } \alpha\geq 1,
  \end{array}
  \right.
$$
with $CE(X;t)$ given in (\ref{eq1.3t}). Moreover, following the same arguments of the proof of Proposition \ref{prop3.3} 
we obtain the following bounds for the dynamic fractional generalized cumulative entropy. The proof is omitted being similar.
\begin{proposition}
For any random variable X with support $(0, l)$ and with finite $CE_{\alpha}(X;t)$, for $\alpha > 0$ and $t\in (0,l)$ we have 
the following bounds:
\begin{itemize}
\item[(a)] $CE_{\alpha}(X;t)\ge D_{\alpha} \,e^{H(X;t)},$ where $D_{\alpha}=\exp\{\int_{0}^{1}\ln (x(-\ln x)^{\alpha}) \,dx\}$ 
and $H(X;t)$  is the cumulative past entropy (\ref{eq1.3t});
\item[(b)] $CE_{\alpha}(X;t)\ge \displaystyle \frac{1}{\Gamma(\alpha+1)} \int_{0}^{t}\frac{F(x)}{F(t)}\left(1-\frac{F(x)}{F(t)}\right)^{\alpha}dx$.
\item[(c)] $CE_{\alpha}(X;t)\le \displaystyle\frac{t}{\Gamma(\alpha+1)} \left( \frac{\alpha}{e} \right)^{\alpha}$, for $0<\alpha\leq 1$. 
\end{itemize}
\end{proposition}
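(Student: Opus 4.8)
The plan is to exploit the fact that, by its very definition in (\ref{eq2.1*}), the dynamic measure coincides with the static fractional generalized cumulative entropy of the truncated variable, namely $CE_{\alpha}(X;t)=CE_{\alpha}(X_{(t)})$, where $X_{(t)}=[X\mid X\le t]$ is a nonnegative random variable with support $(0,t)$ and CDF $F(x)/F(t)$. Consequently, all three bounds are obtained by reproducing verbatim the three arguments in the proof of Proposition \ref{prop3.3}, with $l$ replaced by $t$ and the baseline CDF $F(\cdot)$ replaced by the conditional CDF $F(\cdot)/F(t)$. In other words, the statement is essentially Proposition \ref{prop3.3} read for the past-lifetime distribution.

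For part (b) I would start from the elementary inequality $\ln v\le v-1$, valid for $0<v\le1$, and apply it with $v=F(x)/F(t)$ to get $-\ln(F(x)/F(t))\ge 1-F(x)/F(t)$; raising the two nonnegative quantities to the power $\alpha$ preserves the inequality, and substituting into (\ref{eq2.1*}) while retaining the weight $F(x)/F(t)$ gives the claimed lower bound. For part (c), the relevant ingredient is that $v\mapsto v(-\ln v)^{\alpha}$ is nonnegative and concave on $(0,1)$ for $0<\alpha\le1$, hence dominated by its tangent line at any $\theta\in(0,1]$; choosing $\theta=e^{-\alpha}$, which is exactly the maximizer of this map, yields the pointwise bound $v(-\ln v)^{\alpha}\le(\alpha/e)^{\alpha}$. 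Applying this with $v=F(x)/F(t)$, integrating over $(0,t)$, and dividing by $\Gamma(\alpha+1)$ produces the upper bound in (c).

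Part (a) is the one I expect to carry the real work, and it is where the setup must be handled with care. Following Proposition \ref{prop3.3}(a), I would invoke the integral log-sum inequality, applied to the conditional density $f(x)/F(t)$ of $X_{(t)}$ against the comparison profile proportional to $(F(x)/F(t))\,[-\ln(F(x)/F(t))]^{\alpha}$; this produces a lower bound of the form $D_{\alpha}\,e^{H(X;t)}$, with the constant $D_{\alpha}=\exp\{\int_{0}^{1}\ln(x(-\ln x)^{\alpha})\,dx\}$ arising from the normalization of the comparison profile on the unit interval and therefore independent of $t$. The delicate point is to track the normalizing factor $F(t)$ correctly through the log-sum estimate and to identify the resulting entropy term $H(X;t)$ of the past lifetime; once the inequality is correctly instantiated for the conditional density, the remaining manipulations are routine and parallel the static case exactly.
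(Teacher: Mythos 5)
Your proposal matches the paper exactly: the authors omit the proof of this proposition, stating only that it follows the same arguments as Proposition \ref{prop3.3}, applied to the conditional (past-lifetime) distribution $F(\cdot)/F(t)$ on $(0,t)$ --- which is precisely your strategy. The details you supply for (b) (the inequality $\ln v\le v-1$ with $v=F(x)/F(t)$), for (c) (the pointwise bound $v(-\ln v)^{\alpha}\le(\alpha/e)^{\alpha}$ attained at $v=e^{-\alpha}$), and the log-sum argument for (a) coincide with the paper's own proof of the static case.
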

\par
Next, we introduce the class of distributions based on the monotonicity property of the dynamic fractional generalized cumulative entropy. 
It was proved in \cite{di2009cumulative} that the class of distributions having decreasing dynamic cumulative entropy is empty. 
A similar property holds for $CE_{\alpha}(X;t)$, whereas this measure can be increasing. First, we present the following definition.
\begin{definition}
A nonnegative random variable $X$ is said to have increasing dynamic fractional generalized cumulative entropy 
(IDFCE) if $CE_{\alpha}(X;t)$ is increasing in $t$.
\end{definition}
\par
For instance, from Case (i) of Example \ref{ex:CEXt} we have that the power distribution is IDFCE. 
The following result shows that the above defined class is preserved under affine increasing transformations. 
The proof is immediate due to Proposition \ref{prop2.3}.
\begin{proposition}
Let $Y=cX+b$, where $c>0$ and $b\ge0$. If $X$ is IDFCE, then $Y$ is IDFCE.
\end{proposition}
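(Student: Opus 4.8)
The plan is to derive the result directly from the affine-transformation identity established in Proposition \ref{prop2.3}, without recomputing any integral. The essential observation is that the dynamic fractional generalized cumulative entropy of $Y=cX+b$ at an inspection time $t$ coincides, up to the positive factor $c$, with that of $X$ evaluated at a point that depends affinely and increasingly on $t$.

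First I would recall from Proposition \ref{prop2.3} that, for all $\alpha>0$ and all $t\in(b,b+c\,l)$,
\[
 CE_{\alpha}(Y;t)=CE_{\alpha}(cX+b;t)=c\,CE_{\alpha}\!\left(X;\tfrac{t-b}{c}\right).
\]
Next I would introduce the change of variable $s=\frac{t-b}{c}$ and note that, since $c>0$, the map $t\mapsto s$ is strictly increasing and carries the interval $(b,b+c\,l)$ onto $(0,l)$, which is precisely the range of admissible inspection times for $X$. By the IDFCE hypothesis, $s\mapsto CE_{\alpha}(X;s)$ is increasing on $(0,l)$. Composing this increasing function with the increasing affine map $t\mapsto \frac{t-b}{c}$, and then multiplying by the positive constant $c$, I obtain that $t\mapsto CE_{\alpha}(Y;t)$ is increasing on $(b,b+c\,l)$. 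By definition this means $Y$ is IDFCE.

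There is no genuine obstacle here: the whole argument is a monotonicity-preservation statement, resting entirely on the fact that $c>0$ makes both the prefactor and the argument transformation increasing, so that neither reverses the sense of the inequality. The only point worth stating explicitly is the domain correspondence, namely that the admissible range of $t$ for $Y$ maps exactly onto the admissible range of $s$ for $X$, so that the hypothesis on $X$ is invoked over the correct interval. Once this is noted, the conclusion is immediate.
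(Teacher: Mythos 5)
Your proposal is correct and follows exactly the route the paper intends: the paper's proof is stated as ``immediate due to Proposition \ref{prop2.3}'', i.e.\ the identity $CE_{\alpha}(cX+b;t)=c\,CE_{\alpha}\bigl(X;\frac{t-b}{c}\bigr)$ combined with the fact that an increasing function composed with an increasing affine map, times a positive constant, is increasing. You merely spell out the details (including the correct correspondence of domains) that the paper leaves implicit.
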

\par
Hereafter we consider the dynamic fractional generalized cumulative entropy under the proportional reversed hazard model. 
Specifically, let $X_{\theta}$, $\theta>0$, be a random variable defined  in $(0,l)$ that satisfies the   
proportional reversed hazard model with baseline CDF $F(x)$, i.e.\ having CDF $F_{\theta}(x)=[F(x)]^{\theta}$. 
Hence, from (\ref{eq2.1*}) one has 
\begin{equation}
 CE_{\alpha}(X_\theta;t) 
 = \frac{\theta^\alpha}{\Gamma(\alpha+1)} \int_{0}^{t}    \left(\frac{F(x)}{F(t)}\right)^\theta
 \left[-\ln\left(\frac{F(x)}{F(t)}\right)\right]^{\alpha} dx, 
 \qquad  t\in (0,l).
 \label{eq:CEXtprhm}
\end{equation}
Then, it is not hard to see that in this case, for any $t\in (0,l)$ the following bounds hold:
$$
  CE_{\alpha}(X_\theta;t) 
  \left\{
 \begin{array}{ll} 
   \geq \ \theta^{\alpha} \,CE_{\alpha}(X;t) & \hbox{if } 0< \theta\leq 1 \\
   \leq \ \theta^{\alpha} \,CE_{\alpha}(X;t) & \hbox{if } \theta\geq 1. 
 \end{array} 
  \right.
$$
\par
Let us now consider two examples dealing with  the dynamic fractional generalized cumulative entropy.
\begin{example}\label{ex:exampleBD}
Let $X_\theta$ be a random variable with support $(0, \infty)$, having CDF 
$$
 F_{\theta}(x)=\left(\frac{\mu (1-e^{-(\lambda-\mu) x})}{\lambda -\mu \, e^{-(\lambda-\mu) x}}\right)^{\theta}, 
 \qquad x\in (0, \infty),
$$
for $0<\lambda<\mu$ and  $\theta>0$, and satisfying the proportional reversed hazard model. 
We remark that for $\theta\in\mathbb N$, $X_\theta$ may be viewed as the first-entrance time into the absorbing state 0 
for a linear birth-death process over $\mathbb N_0$, with birth rates $\lambda_n=\lambda\,n$ and death rates $\mu_n=\mu\,n$, 
$n\in\mathbb N_0$, having initial state $\theta\in\mathbb N$ at time 0 
(cf.\ Example 5.2 of Di Crescenzo and Ricciardi \cite{DiCrRicc2001}). 
The corresponding dynamic fractional generalized cumulative entropy, evaluated by means of (\ref{eq:CEXtprhm}), is provided 
in Figure \ref{fig:FigurePRHRM}. It is shown that it is increasing in $t$ and decreasing in $\alpha$. 
% ============================== Figure  4 ==============================
%
\begin{figure}[t]
\centering 
 \includegraphics[scale=0.4]{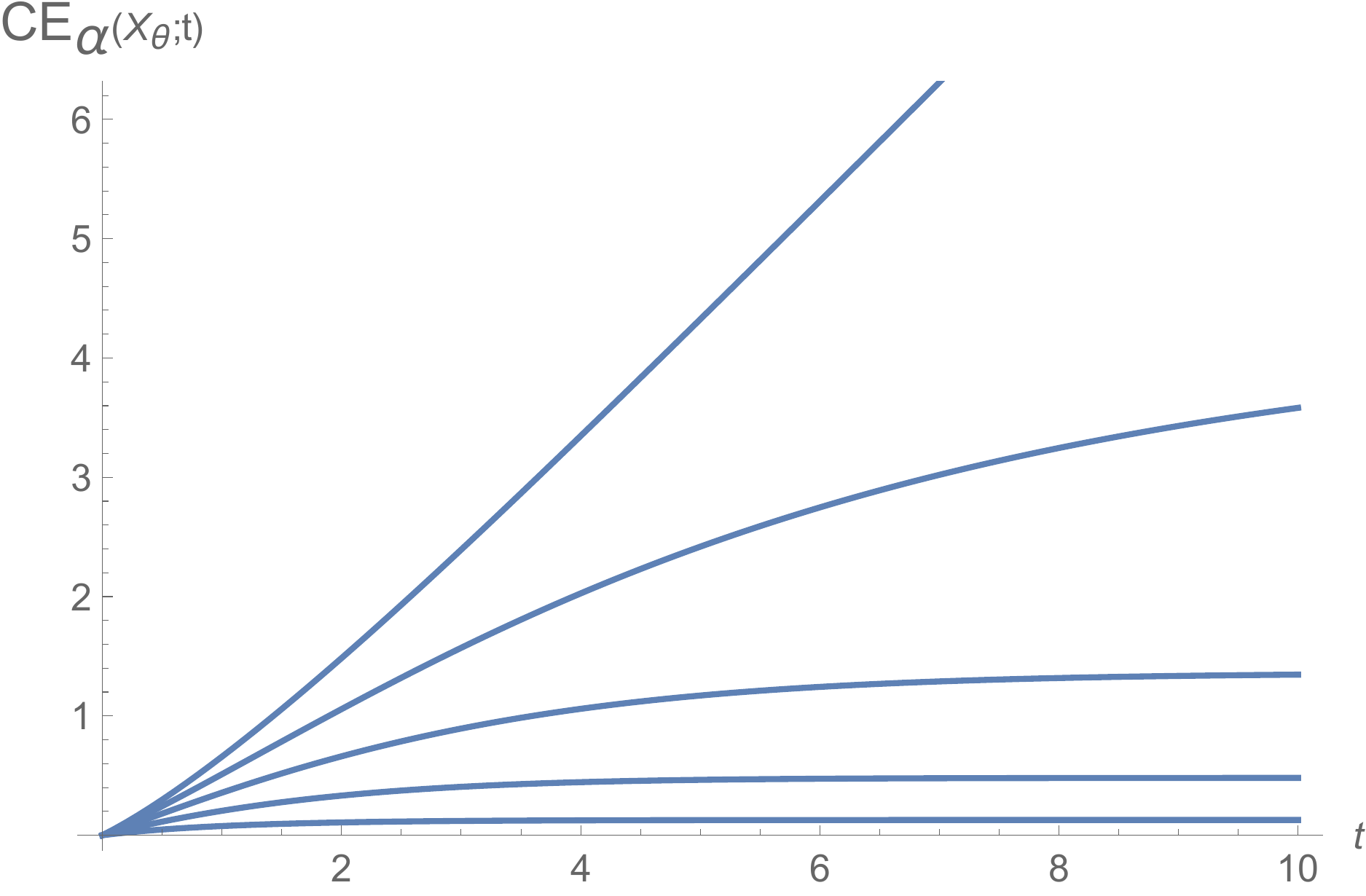}
 \;
 \includegraphics[scale=0.4]{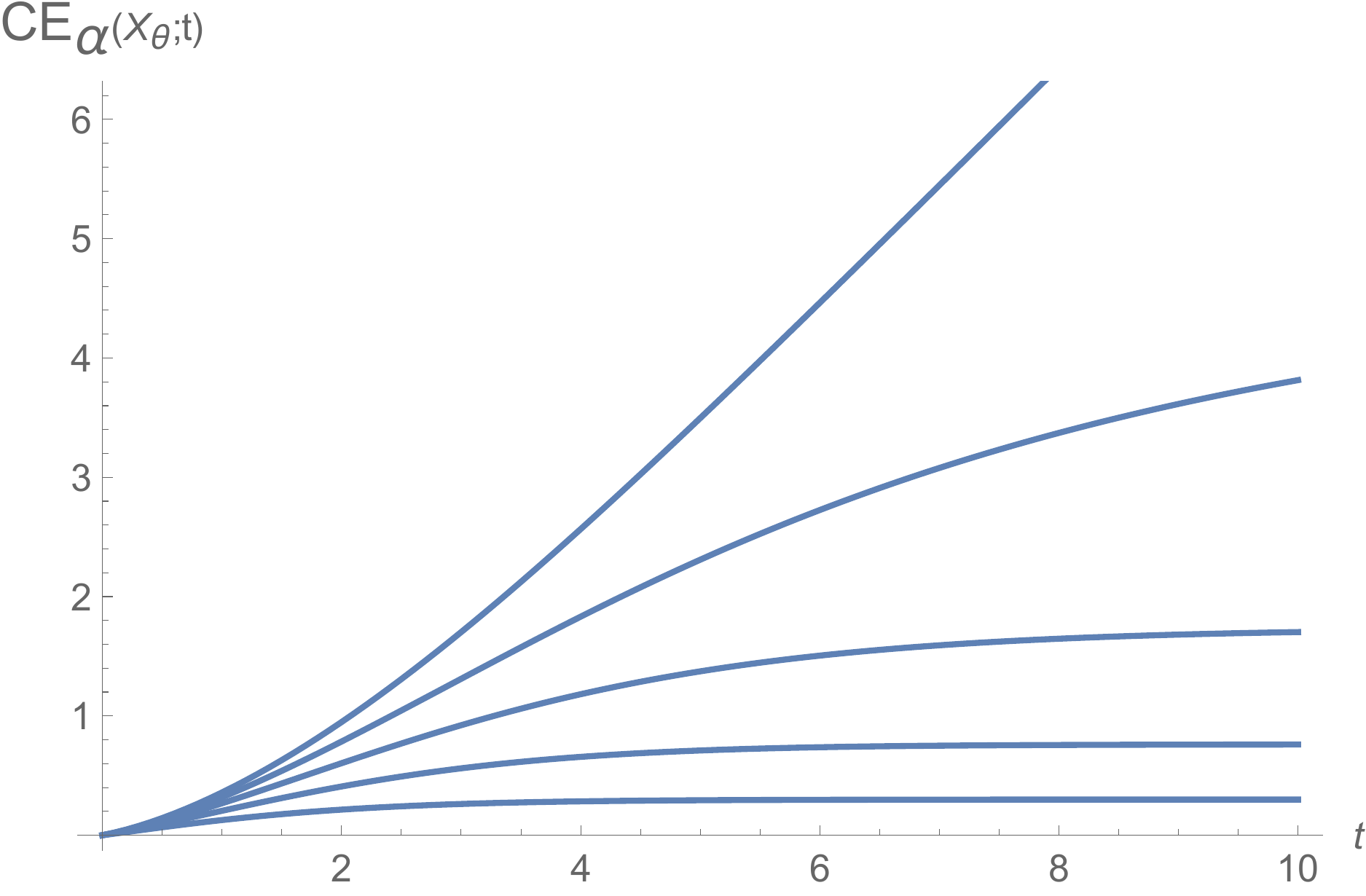}
\caption{
The function $CE_{\alpha}(X_\theta;t)$ for Example \ref{ex:exampleBD} with $\lambda=1$, $\mu=2$, and  $\theta=1$ (left) 
and $\theta=4$ (right), for $\alpha=0$, $0.2$, $0.5$, $1$, $2$ (from top to bottom). 
}
\label{fig:FigurePRHRM}
\end{figure}
\end{example}
\begin{example}\label{ex:exampleGP}
Consider the random variable $X_\theta$ having CDF
$$
 F_{\theta}(x)=\left( \frac{\lambda x}{1+\lambda x } \right)^{\theta}, 
 \qquad x\in (0, \infty),
$$
with $\lambda>0$ and  $\theta>0$. Clearly, it satisfies the proportional reversed hazard model. 
If $\theta\in\mathbb N$, then $X_\theta$ has the same distribution of the first-crossing time of the Geometric Counting Process 
with parameter $\lambda>0$  through the constant boundary $\theta$ (cf.\ Eq.\ (23) of Di Crescenzo and Pellerey \cite{DiCrPell2019}). 
Making use of  (\ref{eq:CEXtprhm}) we can evaluate its dynamic fractional generalized cumulative entropy 
(see Figure \ref{fig:FigurePRHRM2}). Also in this example, $CE_{\alpha}(X_\theta;t) $ is increasing in $t$ and decreasing in $\alpha$. 
% ============================== Figure  5 ==============================
%
\begin{figure}[t]
\centering 
 \includegraphics[scale=0.4]{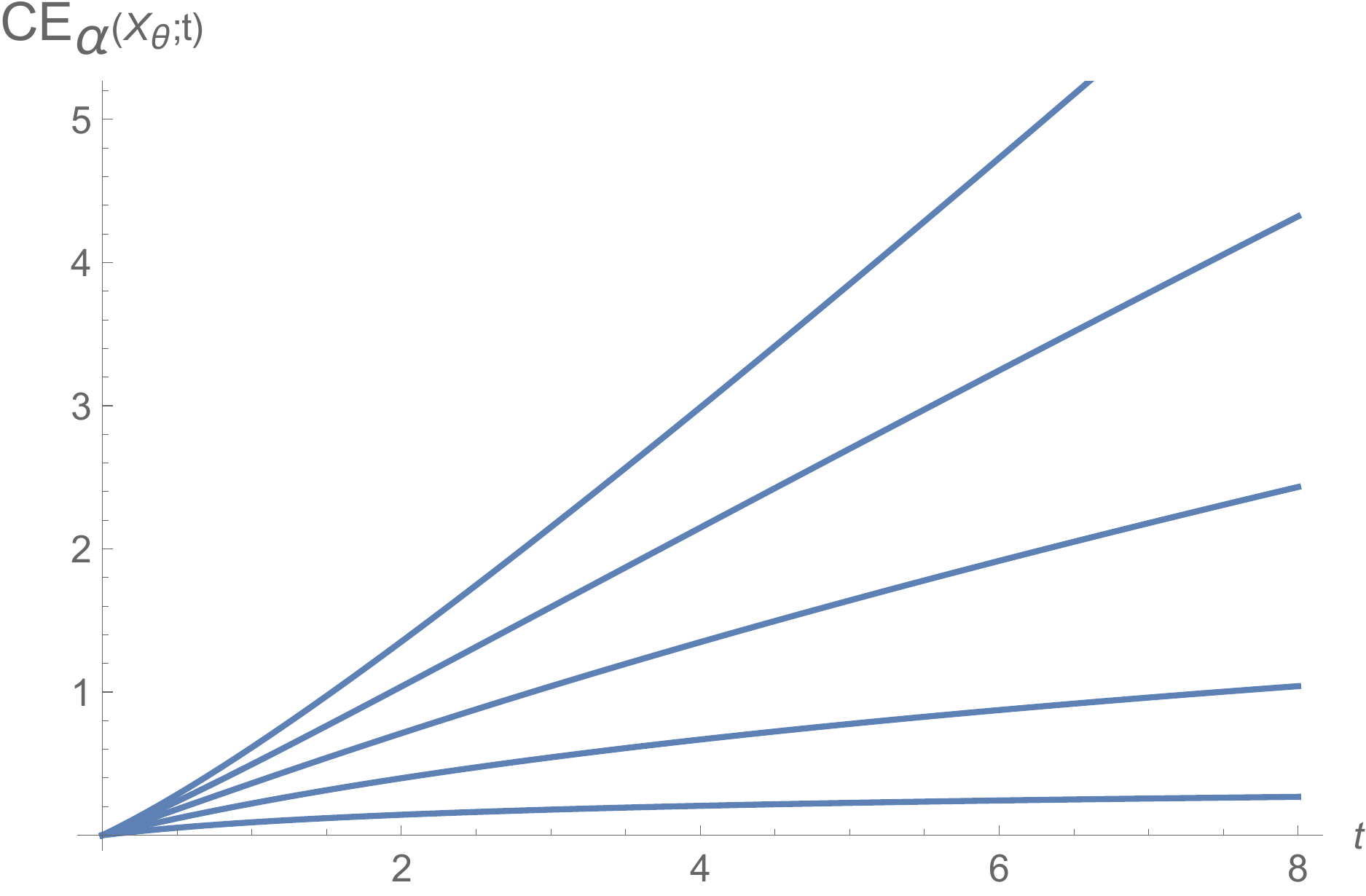}
 \;
 \includegraphics[scale=0.4]{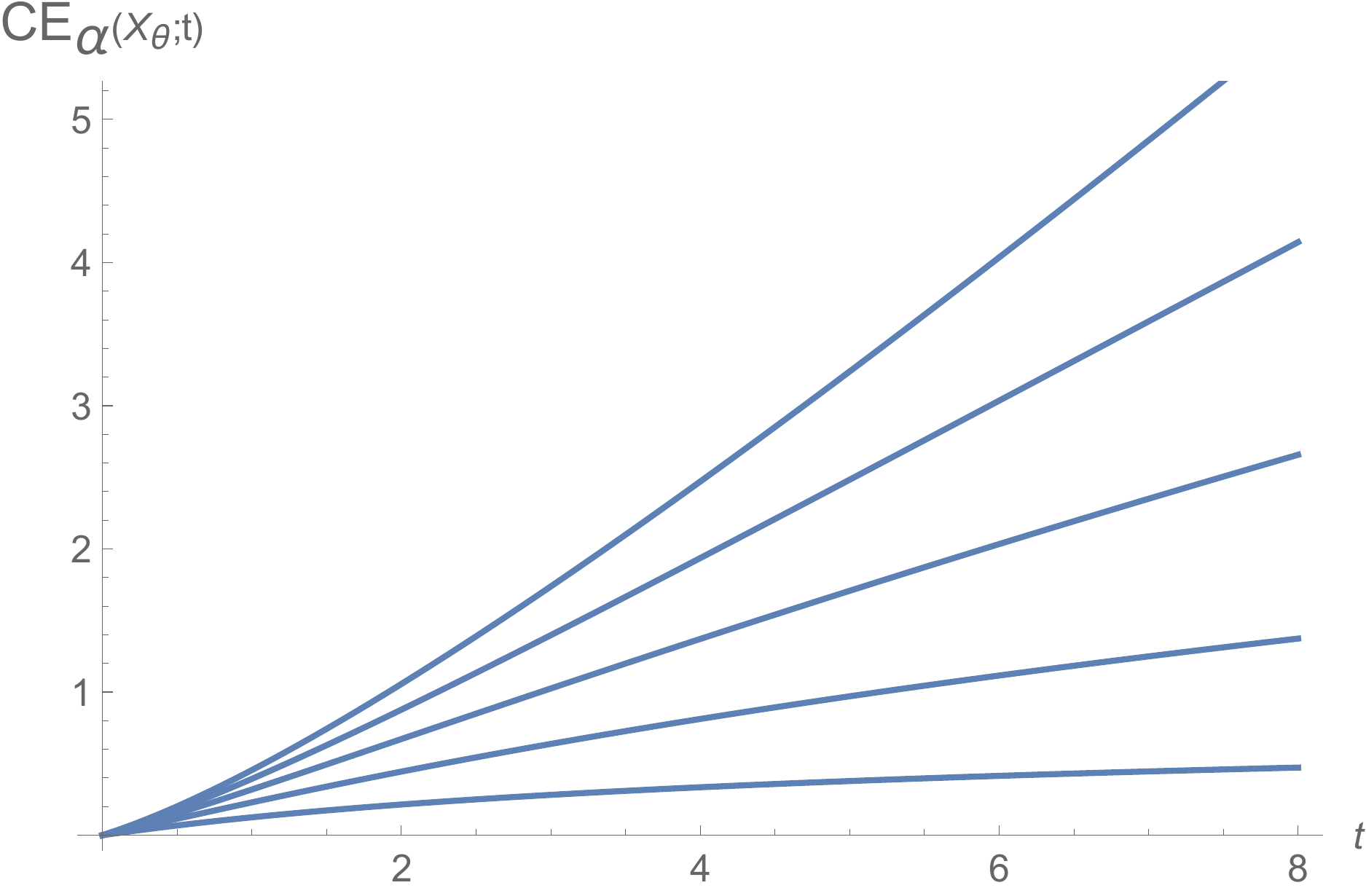}
\caption{
The function $CE_{\alpha}(X_\theta;t)$ of Example \ref{ex:exampleGP} with $\lambda=1$, and  $\theta=1$ (left) 
and $\theta=2$ (right), for $\alpha=0$, $0.2$, $0.5$, $1$, $2$ (from top to bottom). 
}
\label{fig:FigurePRHRM2}
\end{figure}
\end{example}
\par
We note that similar results can be obtained for $CRE_{\alpha}\left(X;t\right)$ under a dual model. 
Assume that $X^*_{\theta}$, $\theta>0$, is a random variable defined in $(0,l)$ which satisfies the proportional hazard model 
with baseline survival function $\bar F(x)$, i.e.\ having survival function $\bar F_{\theta}^*(x)=[\bar F(x)]^{\theta}$. 
Then, due to (\ref{eq:CREalphaXt}) the dynamic fractional generalized cumulative residual entropy of $X$ is given by 
\begin{equation}
 CRE_{\alpha}\left(X_{\theta}^*;t\right)= \frac{\theta^\alpha}{\Gamma(\alpha+1)}
 \int_{t}^{l}   \left(\frac{\bar{F}(x)}{\bar{F}(t)}\right)^{\theta} \left[- \ln\left(\frac{\bar{F}(x)}{\bar{F}(t)}\right)\right]^{\alpha} dx, 
 \qquad t\in (0,l).
 \label{eq:CREalphaXtphr}
\end{equation}
Also in this case we obtain suitable bounds for (\ref{eq:CREalphaXtphr}), i.e.
$$
  CRE_{\alpha}(X_\theta^*;t) 
  \left\{
 \begin{array}{ll} 
   \geq \ \theta^{\alpha} \,CRE_{\alpha}(X;t) & \hbox{if } 0< \theta\leq 1 \\
   \leq \ \theta^{\alpha} \,CRE_{\alpha}(X;t) & \hbox{if } \theta\geq 1. 
 \end{array} 
  \right.
$$
\par
Finally, in analogy with  (\ref{eq2.6}), we note that the normalized dynamic fractional generalized cumulative entropy can defined as 
\begin{eqnarray*}
NCE_{\alpha}(X;t)=\frac{CE_{\alpha}(X;t)}{(CE(X;t))^{\alpha}},
\qquad  t \in (0,l) \qquad (\alpha>0).  
\end{eqnarray*}
% 
% ==================================================
\section{Empirical fractional generalized cumulative entropy}
% ==================================================
This section is devoted to the nonparametric estimate of the fractional generalized cumulative entropy.
\par
Consider a random sample $X_{1},\ldots,X_{n}$ of size $n$ from a distribution with CDF $F.$ Then, the ordered sample values denoted by $X_{1:n}\le\ldots\le X_{n:n}$ represent the order statistics of the random sample. 
As well known, the empirical distribution function based on the random sample is given by
\begin{eqnarray*}
\hat F_{n}(x) 
=\frac{1}{n} \sum_{i=1}^n I_{\{X_i\leq x\}}
= \displaystyle\left\{\begin{array}{ll} 0,
& \textrm{$x<X_{1:n},$}\\[2mm]
\displaystyle\frac{k}{n},& \textrm{$X_{k:n}\leq x<X_{k+1:n},$ \quad $(k=1,\ldots,n-1)$}\\[2mm]
1,& \textrm{$x\geq X_{n:n},$}
\end{array} \right.
\end{eqnarray*}
where $I_A$ is the indicator function of $A$, i.e.\ $I_A=1$ if $A$ is true and $I_A=0$ otherwise. 
Thus, the empirical measure of the fractional generalized cumulative entropy given by (\ref{eq2.1}) can be expressed as
\begin{eqnarray}
 CE_{\alpha}(\hat F_{n}) \!\!\!\! 
 & = &  \!\!\!\!  \frac{1}{\Gamma(\alpha+1)} 
 \int_0^l  \hat F_{n}(x) [-\ln \hat F_{n}(x)]^{\alpha} \, dx
 \nonumber \\
 & = &  \!\!\!\! \frac{1}{\Gamma(\alpha+1)}\sum_{k=1}^{n-1}V_{k+1} \left(\frac{k}{n}\right) 
 \left[-\ln\left(\frac{k}{n}\right)\right]^{\alpha},
 \label{eq4.1}
\end{eqnarray}
where 
$$
 V_{1}=X_{1:n}, \qquad V_{k+1}=X_{k+1:n}-X_{k:n},\qquad k=1,\ldots,n-1,
$$  
are the sample spacings. When $\alpha=1,$ then $CE_{\alpha}(\hat F_{n})$ reduces to the empirical cumulative entropy 
proposed in \cite{di2009cumulative} and in Di Crescenzo and Longobardi \cite{di2009IWINAC}.  
Moreover, when $\alpha$ is a positive integer then 
$CE_{\alpha}(\hat F_{n})$ identifies with the empirical generalized cumulative entropy treated in  \cite{kayal2016generalized} 
and  \cite{di2017further}. 
\par
Next, we discuss the asymptotic property of the empirical fractional generalized cumulative entropy given by (\ref{eq4.1}). 
We first shown that $CE_{\alpha}(\hat F_{n})$ converges to the actual value of the measure introduced in (\ref{eq2.1}).
\begin{proposition}\label{prop4.1}
	Let $X\in L^{p},~p>1$. Then, the empirical fractional generalized cumulative entropy converges to the fractional generalized 
	cumulative entropy almost surely. That is,  for $\alpha>0$
$$
 CE_{\alpha}(\hat F_{n})\; \; \xrightarrow[]{a. s.} \; \;  CE_{\alpha}(X), \qquad\hbox{as \ $n\rightarrow\infty.$}
$$ 
\end{proposition}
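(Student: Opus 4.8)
The plan is to establish almost-sure convergence of $CE_{\alpha}(\hat F_{n})$ to $CE_{\alpha}(X)$ by combining the Glivenko--Cantelli theorem, which gives uniform almost-sure convergence of $\hat F_{n}$ to $F$, with a dominated-convergence argument applied pathwise. First I would write both quantities in the integral form
\begin{equation*}
 CE_{\alpha}(\hat F_{n})=\frac{1}{\Gamma(\alpha+1)}\int_0^l \hat F_n(x)\,[-\ln \hat F_n(x)]^{\alpha}\,dx,
 \qquad
 CE_{\alpha}(X)=\frac{1}{\Gamma(\alpha+1)}\int_0^l F(x)\,[-\ln F(x)]^{\alpha}\,dx,
\end{equation*}
and note that the integrand is a continuous function of the distribution value; more precisely, the map $u\mapsto u(-\ln u)^{\alpha}$ is continuous and bounded on $[0,1]$ (with value $0$ at the endpoints). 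By Glivenko--Cantelli, $\sup_x|\hat F_n(x)-F(x)|\to 0$ almost surely, so the integrand of $CE_{\alpha}(\hat F_{n})$ converges pointwise to that of $CE_{\alpha}(X)$ on an event of probability one.

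The main obstacle is controlling the integral when the support is unbounded ($l=+\infty$), since pointwise convergence of the integrand does not by itself yield convergence of the integrals; I need an integrable dominating function that works uniformly in $n$ along almost every sample path. This is exactly where the moment hypothesis $X\in L^{p}$, $p>1$, enters. The idea is to bound $\hat F_n(x)(-\ln\hat F_n(x))^{\alpha}$ for large $x$ by a fixed integrable function. Since $u(-\ln u)^{\alpha}\le C_{\alpha}\,u^{1-\varepsilon}$ for $u$ near $0$ and any small $\varepsilon>0$ (the logarithmic factor being absorbed into a small power), I would exploit the tail behavior: for $x$ large, $\hat F_n(x)=1-\overline{\hat F_n}(x)$, and the survival function of the empirical distribution is controlled by the sample maximum and by the $L^p$ tail bound $\overline{F}(x)=O(x^{-p})$. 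Choosing $\varepsilon$ so that $(1-\varepsilon)p>1$ makes the dominating bound integrable near infinity, while near the lower endpoint the factor $u(-\ln u)^{\alpha}$ is bounded so no difficulty arises.

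Concretely, I would split $\int_0^\infty$ into $\int_0^M+\int_M^\infty$. On the finite piece $[0,M]$, boundedness of the integrand together with uniform convergence of $\hat F_n$ gives convergence of the integral by bounded convergence. On the tail $[M,\infty)$, I would apply the reverse substitution and use that the contribution of $CE_{\alpha}(X)$ beyond $M$ is small because $X\in L^p\subset L^1$ forces $\int_M^\infty F(x)(-\ln F(x))^{\alpha}\,dx\to 0$ as $M\to\infty$, and simultaneously bound the empirical tail uniformly in $n$ using a concentration or maximal inequality for $\overline{\hat F_n}$ derived from the moment condition. The two estimates combine via a standard $\varepsilon/3$ argument: choose $M$ large to kill both tails, then let $n\to\infty$ on $[0,M]$. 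I expect the delicate step to be the uniform (in $n$, almost surely) tail domination, since it requires transferring the population moment bound to the random empirical measure; this can be handled by the strong law applied to $\frac{1}{n}\sum_i X_i^{\,r}$ for a suitable $r\in(1,p]$, guaranteeing that the empirical moments stay bounded almost surely and hence that the empirical tails are uniformly controlled.
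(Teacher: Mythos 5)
Your overall strategy --- Glivenko--Cantelli for uniform almost-sure convergence of $\hat F_n$, pointwise convergence of the integrand, and a split $\int_0^M+\int_M^\infty$ with a dominated-convergence argument on the tail driven by the $L^p$ assumption and the strong law for empirical moments --- is exactly the route the paper takes: its own proof consists of invoking Glivenko--Cantelli and then deferring every remaining detail to Theorem~9 of Rao \emph{et al.}\ \cite{rao2004cumulative}. In that sense you have written out what the paper leaves implicit, and the bounded-support case and the treatment near the lower endpoint are handled correctly.

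There is, however, one concrete slip in your domination step. The estimate you quote, $u(-\ln u)^{\alpha}\le C_{\alpha}\,u^{1-\varepsilon}$ for $u$ near $0$, is the one needed for the cumulative \emph{residual} entropy, where the argument of the integrand is $\overline{F}_n(x)\to 0$ as $x\to\infty$. Here the integrand is $\hat F_n(x)[-\ln \hat F_n(x)]^{\alpha}$ and in the tail $\hat F_n(x)\to 1$, so the relevant regime is $u$ near $1$: one should use $-\ln u\le (1-u)/u$ to get $u[-\ln u]^{\alpha}\le u^{1-\alpha}(1-u)^{\alpha}\le C\,\big(\overline{\hat F_n}(x)\big)^{\alpha}$ for $x$ past the sample median, and then Markov's inequality together with the strong law give $\overline{\hat F_n}(x)\le x^{-p}\,n^{-1}\sum_i X_i^{p}\le C' x^{-p}$ eventually, almost surely. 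The resulting dominating function is of order $x^{-p\alpha}$, which is integrable at infinity only when $p\alpha>1$; your bookkeeping $(1-\varepsilon)p>1$ is the condition for the survival-function version and does not apply here. For $\alpha\ge 1$ the stated hypothesis $p>1$ suffices, but for $0<\alpha<1$ your argument (and, arguably, the paper's bare appeal to Rao \emph{et al.}) needs the stronger requirement $p>1/\alpha$, or some other control of the upper tail; you should either restrict to $\alpha\ge1$, strengthen the moment hypothesis, or supply a different tail estimate.
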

\begin{proof}
	From Glivenko-Cantelli theorem, it can be established that
$$
	\sup_{x}|\hat F_{n}(x)-F(x)|\; \; \xrightarrow{a.s.}\; \; 0,
	\qquad\hbox{as \ $n\rightarrow\infty$.}
$$  
Thus, the rest of the proof proceeds as in Theorem 9 of  \cite{rao2004cumulative}.
\end{proof}
\par
%In the following, we consider two distributions and study the empirical fractional generalized cumulative entropy.
%
\begin{example}\label{ex4.2} 
For the uniformly distributed identical and independent random observations  in the interval $(0,1)$, the sample spacings 
$V_{k+1}$  follow beta distribution with parameters $1$ and $n$ with $\mathbb E(V_{k+1})=1/(n+1).$ Thus,
	\begin{eqnarray*}
	 \mathbb E\left[ CE_{\alpha}(\hat F_{n})\right]
	 =\frac{1}{\Gamma(\alpha+1)}\,\frac{1}{n+1}\sum_{k=1}^{n-1}\left(\frac{k}{n}\right)
	 \left[-\ln\left(\frac{k}{n}\right)\right]^{\alpha}
	 \equiv \mathbb E(V_{k+1}) \cdot CE_{\alpha}(U),
	\end{eqnarray*}
with $CE_{\alpha}(U)$ given in Eq.\ (\ref{eq:CEU}), 
and
\begin{eqnarray*}
	 Var\left[CE_{\alpha}(\hat F_{n})\right]
	 =\frac{1}{[\Gamma(\alpha+1)]^2}\, \frac{n}{(n+1)^{2}(n+2)}\sum_{k=1}^{n-1}\left(\frac{k}{n}\right)^{2}
	 \left[-\ln\left(\frac{k}{n}\right)\right]^{2\alpha}.
	\end{eqnarray*}
Such mean and variance are shown in Figure \ref{fig:FigureUnif} for $\alpha\in (0,3)$, with various choices of $n$. 
Similarly as Example \ref{ex4.1}, both quantities are decreasing in $\alpha$. Note that in this case we have
\begin{eqnarray*}
\lim_{n\rightarrow\infty} \mathbb E[CE_{\alpha}(\hat F_{n})]
=\frac{1}{2^{\alpha+1}} = CE_{\alpha}(X),
\qquad 
\lim_{n\rightarrow\infty}Var[CE_{\alpha}(\hat F_{n})]=0.
\end{eqnarray*}
Indeed, the considered nonparametric estimator is consistent to the fractional  generalized cumulative 
entropy when the random sample is taken from the $U(0,1)$ distribution.
% ============================== Figure  7 ==============================
%
\begin{figure}[t]
\centering 
 \includegraphics[scale=0.4]{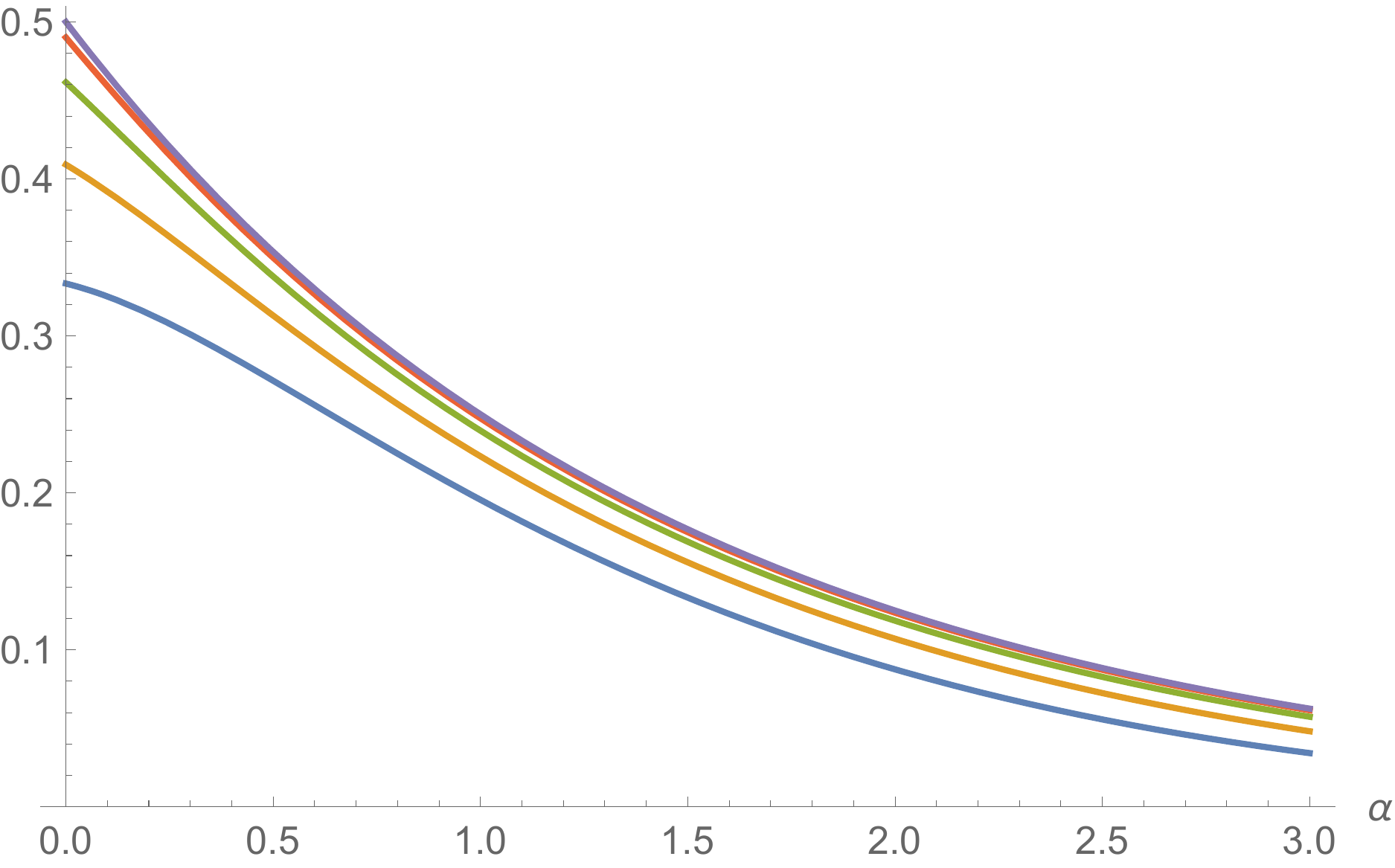}
 \;
 \includegraphics[scale=0.4]{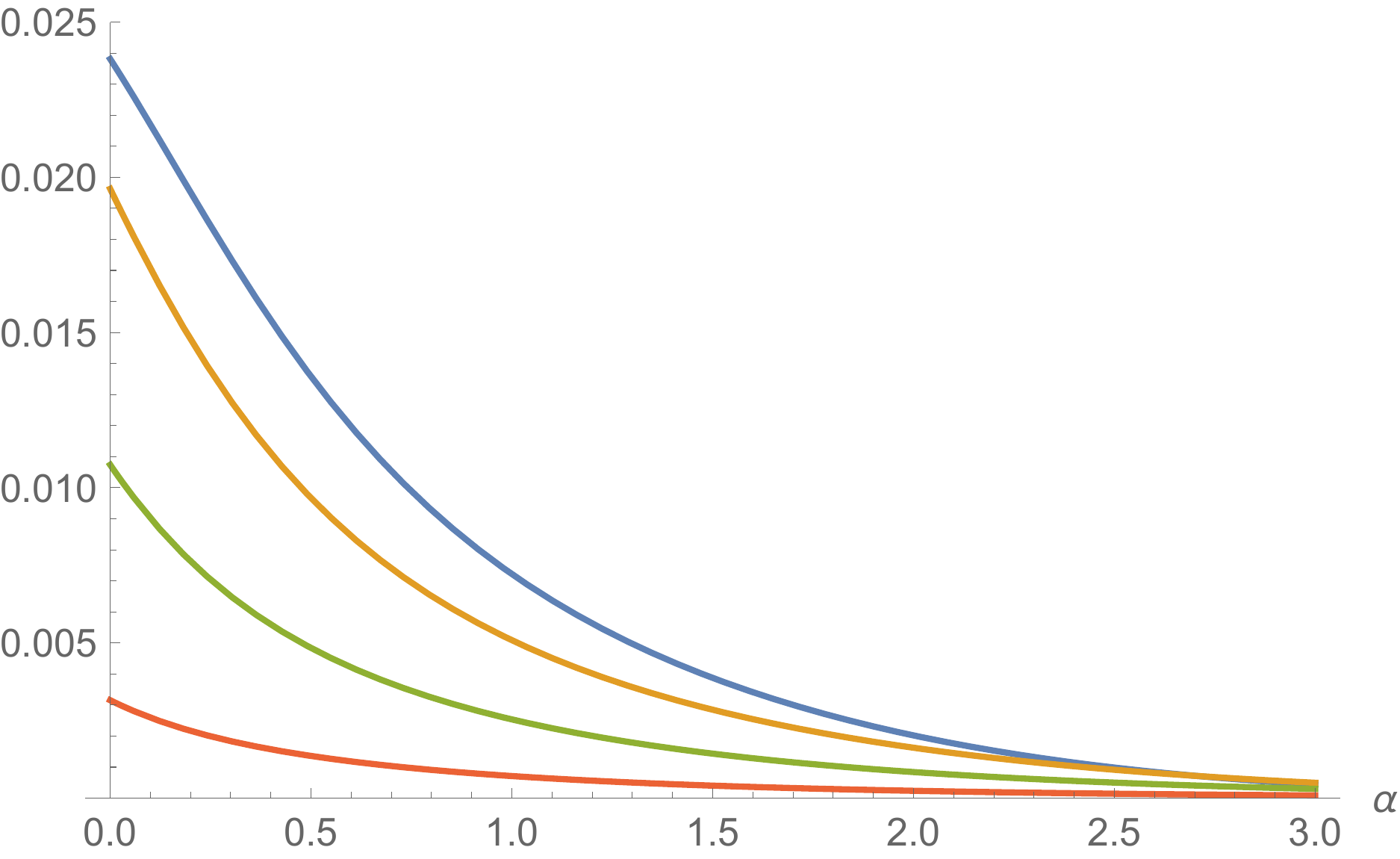}
\caption{
For Example \ref{ex4.2}, on the left:   the upper curve is the fractional generalized cumulative entropy, 
the other curves show  
$ \mathbb E\left[CE_{\alpha}(\hat F_{n})\right]$ 
for $n= 5$, $10$, $25$, $100$ (from bottom to top); on the right: $ Var\left[CE_{\alpha}(\hat F_{n})\right]$ 
for the same choices of $n$  (from top to  bottom).
}
\label{fig:FigureUnif}
\end{figure}
\end{example}
\par
Let us now discuss the stability of the empirical fractional generalized cumulative entropy, by taking as reference 
the Section 3.3 of  \cite{xiong2019fractional}.  
\begin{definition}
Let $X_{1}',\ldots,X_{n}'$ be any small deformation of the random sample $X_{1},\ldots,X_{n}$ taken from a population with CDF $F(x).$ Then, the empirical fractional generalized cumulative entropy is stable if for all $\epsilon>0$, there exists $\delta>0$ such that, 
for all $n\in \mathbb N$, 
$$
\sum_{k=1}^{n} \left|X_{k}-X_{k}'\right|<\delta
\qquad \Rightarrow \qquad 
 \left|CE_{\alpha}(\hat F_{n})-CE_{\alpha}(\hat F_{n}')\right|<\epsilon. 
$$  
\end{definition}
\par
Based on the above definition, below we present sufficient condition for the stability of $CE_{\alpha}(\hat F_{n})$.
\begin{theorem}
The empirical fractional generalized cumulative entropy of an absolutely continuous random variable $X$ is stable provided 
that $X$ has a distribution on a finite interval.
\end{theorem}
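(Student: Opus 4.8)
The plan is to exploit that the empirical measure (\ref{eq4.1}) is \emph{linear} in the sample spacings $V_{k+1}$, with coefficients that can be bounded uniformly in both $k$ and $n$. Writing $CE_{\alpha}(\hat F_{n})=\sum_{k=1}^{n-1}a_{k,n}\,V_{k+1}$ with $a_{k,n}=\frac{1}{\Gamma(\alpha+1)}\,\frac{k}{n}\bigl[-\ln(k/n)\bigr]^{\alpha}$, I would first observe that $h(u)=u(-\ln u)^{\alpha}$ attains on $(0,1)$ its maximum at $u=e^{-\alpha}$, where $h(e^{-\alpha})=(\alpha/e)^{\alpha}$. Hence $0\le a_{k,n}\le M_{\alpha}:=\frac{1}{\Gamma(\alpha+1)}(\alpha/e)^{\alpha}$ for all $k$ and all $n$; this is exactly the constant already appearing in Proposition \ref{prop3.3}(c), and it is what will eventually yield a threshold $\delta$ that does not depend on the sample size.

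Next I would estimate the deviation termwise. By the triangle inequality,
$$\bigl|CE_{\alpha}(\hat F_{n})-CE_{\alpha}(\hat F_{n}')\bigr|\le M_{\alpha}\sum_{k=1}^{n-1}\bigl|V_{k+1}-V_{k+1}'\bigr|,$$
and, since $V_{k+1}-V_{k+1}'=(X_{k+1:n}-X_{k+1:n}')-(X_{k:n}-X_{k:n}')$, a second use of the triangle inequality gives $\sum_{k=1}^{n-1}|V_{k+1}-V_{k+1}'|\le 2\sum_{k=1}^{n}|X_{k:n}-X_{k:n}'|$.

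The step I expect to be the real obstacle is controlling the right-hand side uniformly in $n$: a crude bound $|X_{k:n}-X_{k:n}'|\le\sum_{i}|X_i-X_i'|$ applied to each of the $n$ terms would introduce a spurious factor $n$ and destroy the uniformity that stability requires. To avoid this I would invoke the fact that sorting is a contraction for the $\ell^{1}$ distance, namely $\sum_{k=1}^{n}|X_{k:n}-X_{k:n}'|\le\sum_{k=1}^{n}|X_{k}-X_{k}'|$; this holds because matching two finite samples in increasing order minimizes the total absolute-difference cost over all bijective matchings, and the original index-matching is one admissible choice. Combining the three estimates yields
$$\bigl|CE_{\alpha}(\hat F_{n})-CE_{\alpha}(\hat F_{n}')\bigr|\le 2M_{\alpha}\sum_{k=1}^{n}|X_{k}-X_{k}'|\qquad\text{for every }n\in\mathbb N,$$
so that, given $\epsilon>0$, the choice $\delta=\epsilon/(2M_{\alpha})$ settles the claim. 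The hypothesis that $X$ is supported on a finite interval enters to guarantee that the empirical measures and their deformations are finite and that the scheme of \cite{xiong2019fractional}, Section 3.3, applies; the decisive point, however, is that $M_{\alpha}$ depends only on $\alpha$, which is precisely what makes $\delta$ independent of $n$.
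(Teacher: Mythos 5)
Your argument is correct, and it is in fact more complete than the paper's own proof, which only rewrites $CE_{\alpha}(\hat F_n)$ as a linear combination of the spacings and then defers entirely to Theorem 5 of \cite{xiong2019fractional}. The substance you supply explicitly is exactly what that deferral hides: (i) the uniform bound $a_{k,n}\le M_{\alpha}=(\alpha/e)^{\alpha}/\Gamma(\alpha+1)$ on the coefficients, obtained from the maximum of $u(-\ln u)^{\alpha}$ on $(0,1)$ at $u=e^{-\alpha}$ --- and note this pointwise maximization is valid for every $\alpha>0$, not only for $0<\alpha\le 1$ as in the concavity-based Proposition \ref{prop3.3}(c); and (ii) the $\ell^{1}$-contractivity of sorting, $\sum_{k}|X_{k:n}-X_{k:n}'|\le\sum_{k}|X_{k}-X_{k}'|$, which is the step that prevents the spurious factor $n$ and makes $\delta=\epsilon/(2M_{\alpha})$ independent of the sample size, as the stability definition requires (the two samples have the same size, so the coefficients $a_{k,n}$ are identical for both empirical measures and linearity in the spacings applies cleanly). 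One small observation: your chain of estimates never actually uses the hypothesis that $X$ is supported on a finite interval --- the empirical measures are finite sums regardless --- so your argument proves the stability statement without that assumption; the hypothesis is inherited from the scheme of \cite{xiong2019fractional} rather than being needed here.
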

\begin{proof}
Assume that $X$ has distribution in a nonnegative finite interval.  
The empirical fractional generalized cumulative entropy is written as
\begin{eqnarray*}
CE_{\alpha}(\hat F_n) 
= \frac{1}{\Gamma(\alpha+1)} \sum_{k=1}^{n-1} (X_{k+1:n}-X_{k:n}) \hat F_n(X_{k:n})
\left[-\ln\left( \hat F_n(X_{k:n})\right)\right]^{\alpha}, 
\qquad \alpha>0.
\end{eqnarray*}
Then, the proof proceeds as for Theorem 5 of \cite{xiong2019fractional} and thus it is omitted.
\end{proof}
\par
As example, we now analyze a real data set and compute the empirical fraction generalized cumulative entropy 
for different values of $\alpha$.
\begin{example}\label{ex4.2Chowdhury}
We consider the following data set from Chowdhury {\em et al.}\ \cite{Chowdhury2017}, concerning observations 
taken from \cite{planecrashinfo} on the number of casualties in $n=44$ different plane crashes: 
$$
\begin{array}{l}
 \{3,\ 77,\ 9,\ 6,\ 14,\ 6,\ 23,\ 32,\ 18,\ 7,\ 27,\ 22,\ 10,\ 47,\ 9,\ 85,\ 7,\ 16,\ 80,\ 2,\ 8,\ 
\\
 38,\ 11,\ 12,\ 4,\ 21,\ 8,\ 44,\ 30,\ 3,\ 2,\ 19,\ 18,\ 2,\ 28,\ 8,\ 1,\ 5,\ 8,\ 1,\ 3,\ 5,\ 4,\ 3\}.
 \end{array}
$$
Based on the given dataset, we compute the values of the fractional generalized cumulative entropy (\ref{eq4.1}), 
shown in Figure \ref{fig:CEmp}. Hence, we deal with a linear combination of terms of the type  
$$
 \varphi(\alpha; x):=\frac{x^{\alpha}}{\Gamma(\alpha+1)}, \qquad \alpha>0, \;\; x=-\ln \left(\frac{k}{n}\right)>0.
$$
Note that if $0< x \leq e^{-\gamma}=0.5615...$ (where $\gamma$ is the Euler-Mascheroni constant) then 
the function $\varphi(\alpha; x)$ is decreasing in $\alpha$; moreover if 
$0< x \leq \exp\big\{-\frac{1}{2} \big(2 \gamma + \sqrt{(2/3)}\,\pi\big)\big\}=0.1557...$ then $\varphi(\alpha; x)$ 
is convex in $\alpha$. If the sample spacings are slowly varying, since the larger coefficients in the sum on the 
right-hand side of (\ref{eq4.1}) are given by large $k$, and thus for $x$ close to 0, then in the linear combination 
for the fractional generalized cumulative entropy the prevailing terms are decreasing convex in $\alpha$. 
This remark justifies its form in the left plot of Figure \ref{fig:CEmp}, where it is shown as a function of $\alpha$. 
Furthermore, when it is treated as a function of $n$, i.e.\ referring to the first $n$ data of the sample, 
the right plot of Figure \ref{fig:CEmp} shows a jagged trend, that is smoother for larger values of $\alpha.$
\end{example}
% =========================== Figure 8
\begin{figure}[t]
\center
 \includegraphics[scale=0.4]{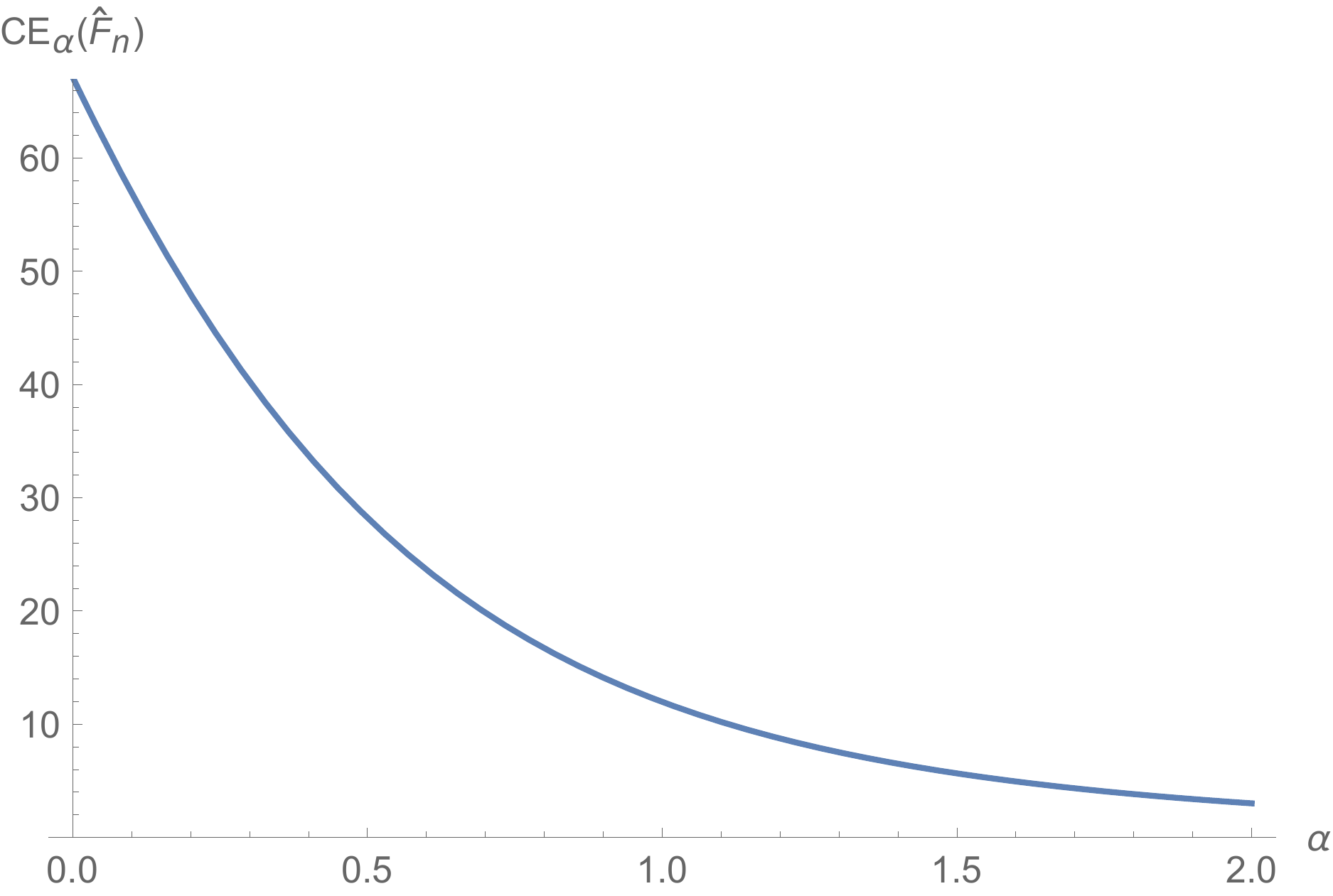}
 \;
 \includegraphics[scale=0.4]{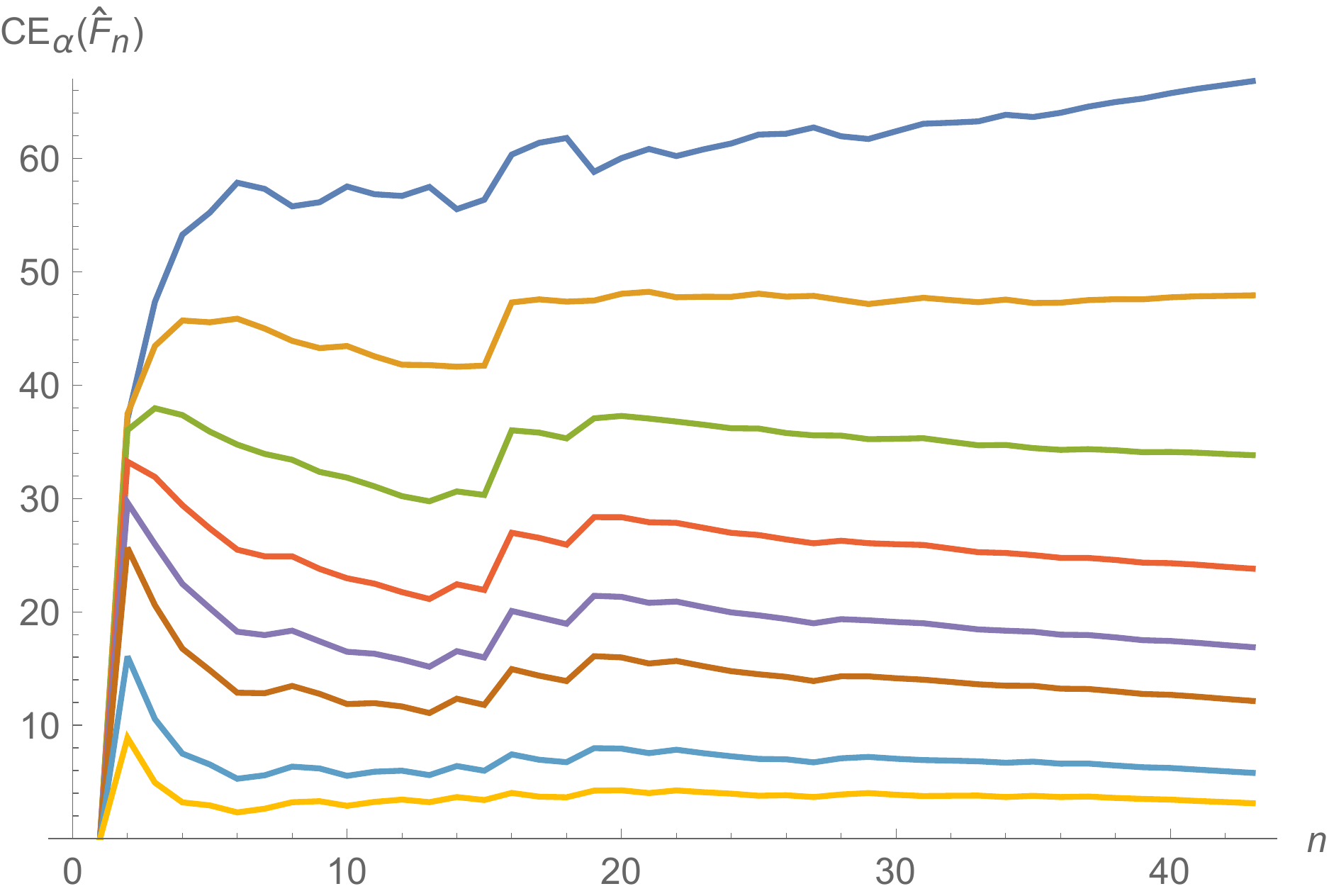}

\caption{ 
The values of the fractional generalized cumulative entropy for the data set of Example \ref{ex4.2Chowdhury}. On the left: for $0\leq \alpha\leq 2$ 
and $n=44$. On the right: for $n$ varying, with reference to the first $n$ data of the sample, with 
$\alpha=0,~0.2,~0.4,~0.6,~0.8,~1,~1.5$ and $2$ (from top to bottom). 
}
\label{fig:CEmp}
\end{figure}
% ===========================	
%
% ============
\subsection{Exponential distribution}
% ============
In this section, we consider the special case in which the i.i.d.\ random observations 
are available from the exponential distribution.
We first present a central limit theorem  for the empirical fractional generalized 
cumulative entropy. 
\begin{proposition}
	Let $X_{1},\ldots,X_{n}$ be a random sample from the exponential distribution with parameter $\lambda$. Then, for any $\alpha >0$
$$
 Z_{n}:=\frac{CE_{\alpha}(\hat F_{n})-\mathbb E[CE_{\alpha}(\hat F_{n})]}
   { \left( Var[CE_{\alpha}(\hat F_{n})] \right)^{1/2}} 
	\quad \rightarrow \quad {\cal N}(0,1)
$$ 
in distribution as $n\rightarrow\infty.$
\end{proposition}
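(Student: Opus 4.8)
The plan is to reduce the statistic to a weighted sum of independent exponential variables via the R\'enyi representation, and then to invoke a central limit theorem for such sums. For an i.i.d.\ sample from the exponential law with parameter $\lambda$, the R\'enyi representation gives $X_{k:n}=\frac{1}{\lambda}\sum_{i=1}^{k}\frac{Z_i}{n-i+1}$ with $Z_1,\dots,Z_n$ i.i.d.\ standard exponential, so that the spacings satisfy $V_{k+1}=X_{k+1:n}-X_{k:n}=\frac{Z_{k+1}}{\lambda(n-k)}$. Substituting this into the empirical measure (\ref{eq4.1}), I would write
\[
 CE_{\alpha}(\hat F_n)=\sum_{k=1}^{n-1}c_{n,k}\,Z_{k+1},
 \qquad
 c_{n,k}:=\frac{1}{\lambda\,\Gamma(\alpha+1)}\,\frac{1}{n-k}\Big(\frac{k}{n}\Big)\Big[-\ln\Big(\frac{k}{n}\Big)\Big]^{\alpha}.
\]
Since the $Z_{k+1}$ are i.i.d.\ with unit mean and unit variance, this exhibits $CE_{\alpha}(\hat F_n)$ as a linear combination of i.i.d.\ summands, whence $\mathbb E[CE_{\alpha}(\hat F_n)]=\sum_k c_{n,k}$, $Var[CE_{\alpha}(\hat F_n)]=\sum_k c_{n,k}^2$, and $Z_n=\big(\sum_k c_{n,k}(Z_{k+1}-1)\big)/\big(\sum_k c_{n,k}^2\big)^{1/2}$.

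With the statistic in this form, the natural tool is the Lindeberg--Feller theorem (equivalently, the H\'ajek--\v{S}id\'ak criterion) for triangular arrays of weighted i.i.d.\ variables. Because the centred summands $Z_{k+1}-1$ possess finite moments of every order, the Lindeberg condition reduces to the weight-negligibility condition
\[
 \frac{\max_{1\le k\le n-1}c_{n,k}^2}{\sum_{k=1}^{n-1}c_{n,k}^2}\longrightarrow 0
 \qquad(n\to\infty),
\]
and one may equivalently verify the Lyapunov condition $\sum_k c_{n,k}^4/\big(\sum_k c_{n,k}^2\big)^2\to 0$. Establishing either is the heart of the argument. I would approximate the sums by Riemann integrals of $g(t)=\frac{1}{\lambda\Gamma(\alpha+1)}\frac{t[-\ln t]^{\alpha}}{1-t}$ on $(0,1)$, so that $\sum_k c_{n,k}^2\approx \frac1n\int_0^1 g(t)^2\,dt$ in the regime where that integral converges, giving $Var[CE_{\alpha}(\hat F_n)]$ of order $n^{-1}$ and each individual weight asymptotically negligible.

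The main difficulty, and the point requiring the most care, is the behaviour of the weights near the upper boundary $k\to n$, where the factor $1/(n-k)$ blows up. Writing $k=n-j$ one finds $c_{n,n-j}\sim \frac{1}{\lambda\Gamma(\alpha+1)}\,j^{\alpha-1}n^{-\alpha}$, so the largest weight is $c_{n,n-1}=\Theta(n^{-\alpha})$ while the bulk of the terms contribute at order $n^{-1}$; the integral $\int_0^1 g^2$ converges, and the bulk genuinely dominates the boundary, precisely when this boundary exponent is controlled. This boundary regime is where any restriction on the admissible range of $\alpha$ would originate, and the delicate estimate is to show that the finitely many near-boundary spacings become asymptotically negligible relative to the total variance. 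Once the negligibility condition is verified, the Lindeberg--Feller theorem yields $Z_n\to{\cal N}(0,1)$ in distribution, completing the proof. I expect this boundary analysis, rather than the algebraic reduction via the R\'enyi representation, to be the crux of the argument.
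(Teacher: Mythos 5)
Your reduction is exactly the one the paper uses: the paper also writes $CE_{\alpha}(\hat F_n)$ as a sum of independent exponential random variables $U_k$ with mean $\frac{1}{\Gamma(\alpha+1)}\frac{k}{\lambda n(n-k)}\left[-\ln(k/n)\right]^{\alpha}$ (your $c_{n,k}$) and then defers the normal convergence to ``similar arguments'' in Di Crescenzo and Toomaj \cite{di2017further}. So your R\'enyi-representation step, and your identification of the H\'ajek--\v{S}id\'ak negligibility condition $\max_k c_{n,k}^2/\sum_k c_{n,k}^2\to 0$ as the thing to verify, are both on target and indeed more explicit than the paper's own argument.

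The gap is that you stop exactly where the work begins, and the boundary analysis you correctly flag as the crux does not come out in your favour for all $\alpha>0$. Writing $k=n-j$ gives $c_{n,n-j}\sim\frac{1}{\lambda\Gamma(\alpha+1)}\,j^{\alpha-1}n^{-\alpha}$, so the variance $\sum_k c_{n,k}^2$ consists of a boundary contribution of order $n^{-2\alpha}\sum_j j^{2\alpha-2}$ plus an $O(n^{-1})$ bulk. For $\alpha>1/2$ the bulk dominates, $\max_k c_{n,k}^2=c_{n,n-1}^2=\Theta(n^{-2\alpha})=o(n^{-1})$, negligibility holds and Lindeberg--Feller applies (at $\alpha=1/2$ the variance is of order $n^{-1}\ln n$ and the ratio still vanishes). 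But for $0<\alpha<1/2$ the series $\sum_j j^{2\alpha-2}$ converges, the total variance is $\Theta(n^{-2\alpha})$, and the single largest weight carries the non-vanishing fraction $1/\zeta(2-2\alpha)$ of it; the negligibility condition fails, and the normalized sum in fact converges to the non-Gaussian law $\sum_{j\ge 1}j^{\alpha-1}(W_j-1)/\sqrt{\zeta(2-2\alpha)}$ with $W_j$ i.i.d.\ standard exponential. So the route you propose (which is the paper's route) establishes the claim only for $\alpha\ge 1/2$; your remark that any restriction on $\alpha$ would originate at the upper boundary is exactly right, and carrying the estimate through shows that such a restriction is genuinely needed.
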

\begin{proof}
	From \cite{di2009cumulative}, we note that the empirical fractional generalized cumulative entropy 
	can be expressed as the sum of independent exponential random variables $U_{k}$ with mean
	\begin{eqnarray*}
	\mathbb E[U_{k}]=\frac{1}{\Gamma(\alpha+1)}\,
	\frac{k}{\lambda n(n-k)}\left[-\ln\left(\frac{k}{n}\right)\right]^{\alpha}.
	\end{eqnarray*}
The rest of the proof follows using similar arguments in \cite{di2017further}. Thus, it is omitted.
\end{proof}
\begin{example}\label{ex4.1} 
Consider a random sample $X_{1},\ldots,X_{n}$ from the exponential distribution with parameter $\lambda.$ 
Since the sample spacings are independent, thus, $V_{k+1}$ follows the exponential distribution with 
parameter $\lambda(n-k).$ So, from (\ref{eq4.1}) the expectation and variance of the empirical fractional 
generalized cumulative entropy are respectively obtained as
		\begin{eqnarray*}
		 \mathbb E\left[CE_{\alpha}(\hat F_{n})\right]
		 =\frac{1}{\Gamma(\alpha+1)}\,\frac{1}{\lambda}\sum_{k=1}^{n-1}\frac{1}{n-k}
		 \left(\frac{k}{n}\right)\left[-\ln\left(\frac{k}{n}\right)\right]^{\alpha},
		\end{eqnarray*}
		and
\begin{eqnarray*}
		 Var\left[CE_{\alpha}(\hat F_{n})\right]
		 =\frac{1}{[\Gamma(\alpha+1)]^2}\,
		 \frac{1}{\lambda^2}\sum_{k=1}^{n-1}\frac{1}{(n-k)^2}\left(\frac{k}{n}\right)^2\left[-\ln\left(\frac{k}{n}\right)\right]^{2\alpha}.
\end{eqnarray*}
Figure \ref{fig:FigureEsp} shows the above quantities as a function of $\alpha$, for some choices of $n$. 
In particular, both mean and variance are decreasing in $\alpha$. 
Moreover, it is shown that the mean $\mathbb E\left[CE_{\alpha}(\hat F_{n})\right]$ 
approaches  the fractional generalized cumulative entropy as $n$ grows, more rapidly for larger $\alpha$. 
%
% ============================== Figure  6 ==============================
%
\begin{figure}[t]
\centering 
 \includegraphics[scale=0.4]{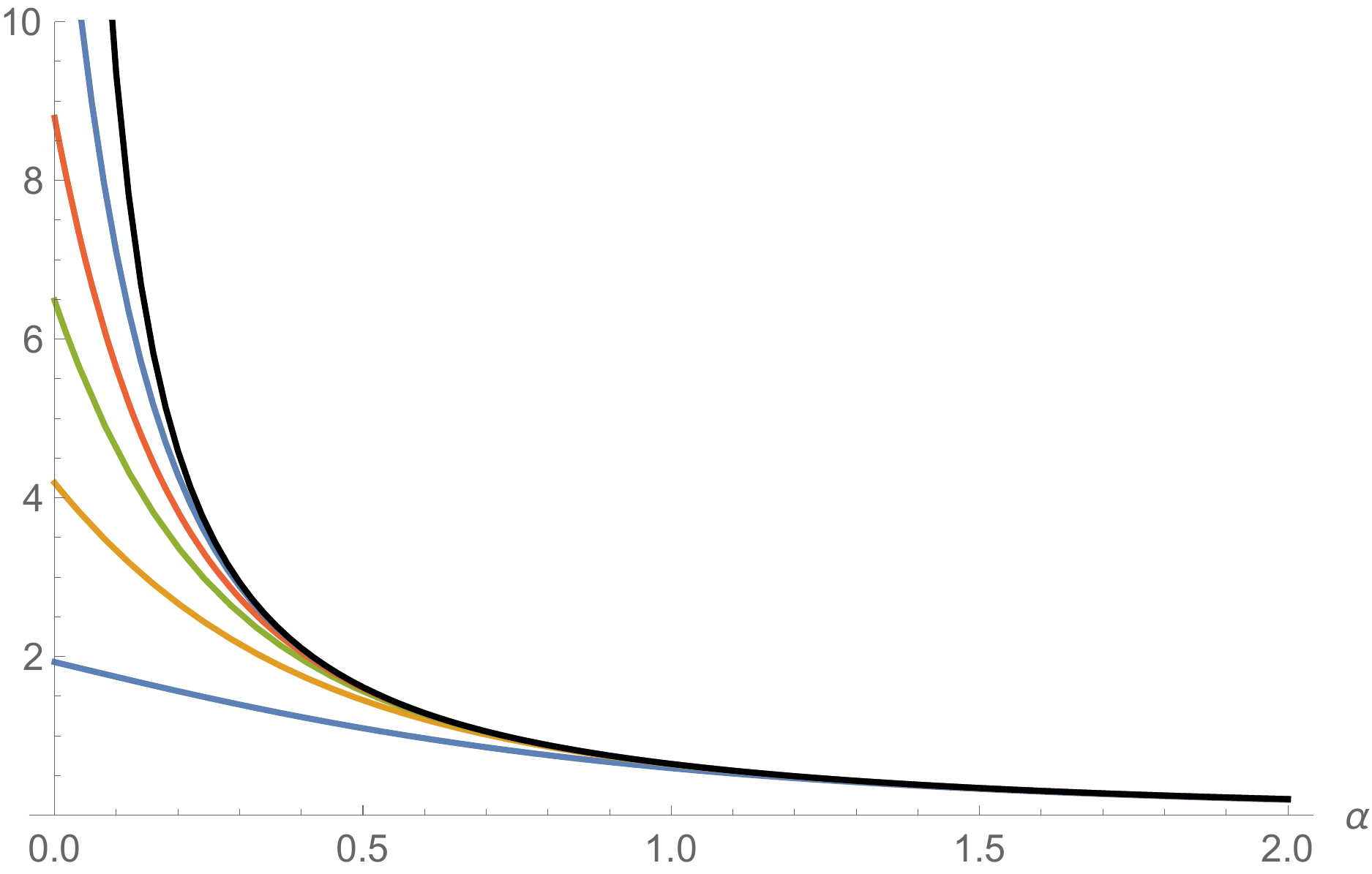}
 \;
 \includegraphics[scale=0.4]{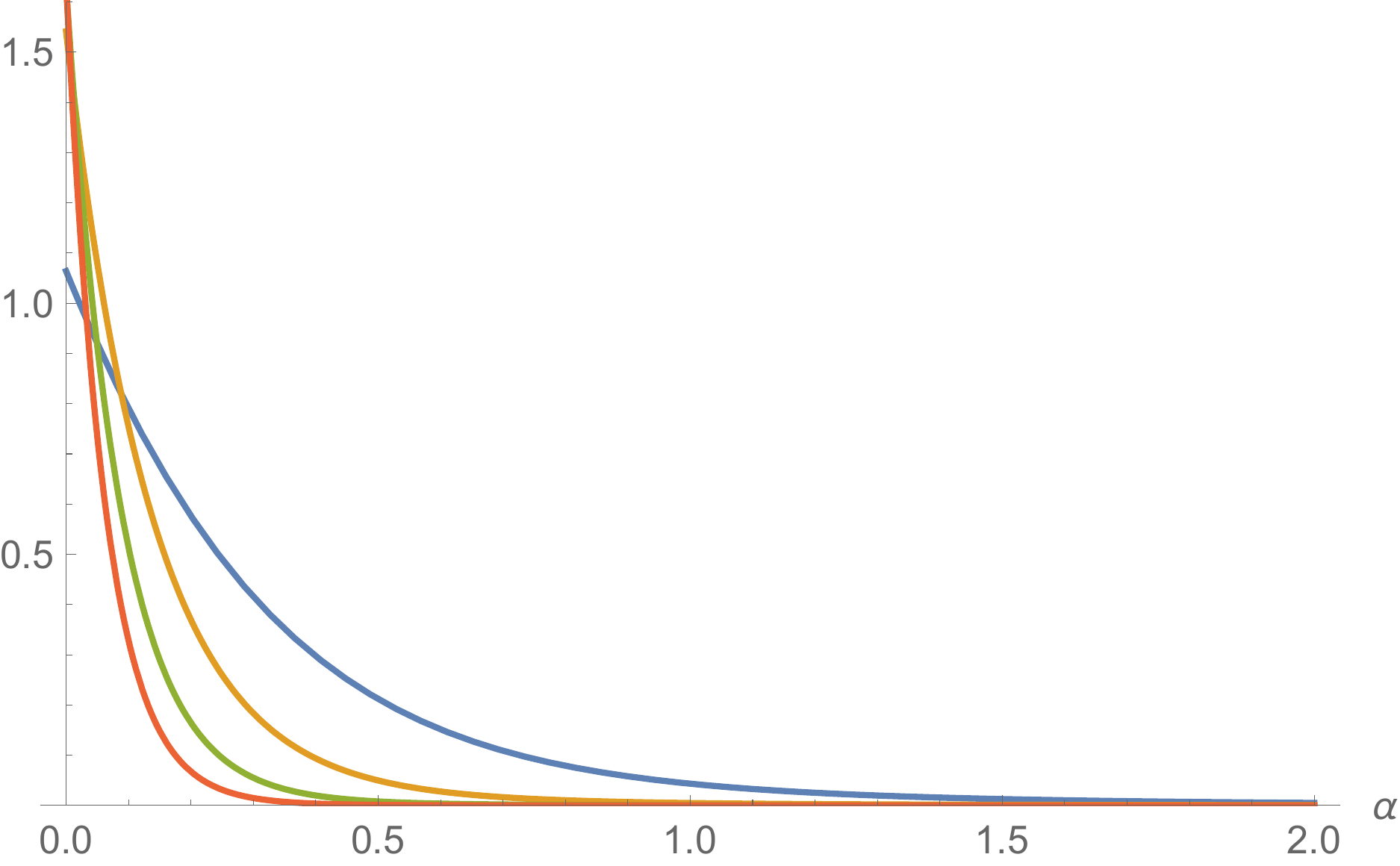}
\caption{
With reference to Example \ref{ex4.1}, for $\lambda=1$, on the left: the upper curve is the fractional generalized 
cumulative entropy, the other curves give $ \mathbb E\left[CE_{\alpha}(\hat F_{n})\right]$ 
for $n=10^h$, $h=1$, $2$, $3$, $4$, $6$ (from bottom to top); on the right: $ Var\left[CE_{\alpha}(\hat F_{n})\right]$ 
for $n=10^h$, $h=1$, $2$, $3$, $4$  (from top to  bottom in proximity of $\alpha=0.5$).
}
\label{fig:FigureEsp}
\end{figure}
\end{example}

% ============
\section{Concluding remarks}
% ============
In this paper, we  defined the fractional generalized cumulative entropy and its dynamic version. 
Moreover, we provided an interesting link with fractional integrals of generic order 
$\alpha>0$, which could create new research ideas in the theory of fractional calculus. 
Various properties including bounds and ordering results have been studied. It is shown that the usual stochastic 
ordering does not imply the ordering between the considered entropies. 
However, we have shown that the dispersive order implies the ordering of the  considered measure. Thus, the fractional 
generalized cumulative entropy  actually constitutes a variability measure. This fact discloses the possibility of applications in risk theory 
involving the proportional hazards model, for instance along the line addressed by Psarrakos and Sordo \cite{PsarrakosSordo}. 
\par
A nonparametric estimator of the fractional measure has been  proposed based on the empirical distribution function. 
Various statistical properties of the empirical fractional generalized cumulative entropy have been studied, 
including asymptotic results for large samples. A stability criteria of the proposed measure has been studied, too. 
Finally, we focus on the convergence of the estimator 
and on the central limit theorem when a random sample is taken from the 
exponential distribution. 
%
%%%%%%%%%%%%%%%%%%%%%%%%%%%%%%%%%
\subsection*{Acknowledgements}
%%%%%%%%%%%%%%%%%%%%%%%%%%%%%%%%%
%
Antonio Di Crescenzo and Alessandra Meoli are members of the research group GNCS of INdAM (Istituto Nazionale di Alta Matematica). 
This research is partially supported by MIUR - PRIN 2017, project `Stochastic Models for Complex Systems', 
No.\ 2017JFFHSH. 
Suchandan Kayal gratefully acknowledges the partial financial support for this research work under a grant MTR/2018/000350, SERB, India.
%
%%%%%%%%%%%%%%%%%%%%%%%%%%%%%%%%%%
%\section*{References}
%%%%%%%%%%%%%%%%%%%%%%%%%%%%%%%%%%

%
\end{document}